\documentclass[10pt]{article}
\usepackage[T1]{fontenc}
\usepackage{amsmath,amsfonts,amsthm,mathrsfs,amssymb}
\usepackage{cite}
\usepackage{graphicx,float}
\usepackage{subfigure}
\usepackage{placeins}
\usepackage{color}
\usepackage{indentfirst}
\numberwithin{equation}{section}
\topmargin =0mm \headheight=0mm \headsep=0mm \textheight =220mm
\textwidth =160mm \oddsidemargin=0mm\evensidemargin =0mm
\sloppy \brokenpenalty=10000

\newcommand{\R}{{\mathbb R}}
\newcommand{\Z}{{\mathbb Z}}
\newcommand{\N}{{\mathbb N}}
\newcommand{\C}{{\mathbb C}}
\newcommand{\s}{{\mathbb S}}

\newcommand{\bb}{\bf}

\newcommand{\no}{\nonumber}
\newcommand{\be}{\begin{eqnarray}}
\newcommand{\ben}{\begin{eqnarray*}}
\newcommand{\en}{\end{eqnarray}}
\newcommand{\enn}{\end{eqnarray*}}

\newcommand{\pa}{\partial}

\newcommand{\ov}{\overline}
\newcommand{\curl}{{\rm curl\,}}

\newcommand{\grad}{{\rm grad\,}}
\newcommand{\divv}{{\rm div\,}}
\newcommand{\real}{{\rm Re\,}}
\newcommand{\Ima}{{\rm Im\,}}

\newcommand{\G}{\Gamma}

\newcommand{\Om}{\Omega}
\newcommand{\om}{\omega}

\newtheorem{theorem}{Theorem}[section]
\newtheorem{lemma}[theorem]{Lemma}
\newtheorem{corollary}[theorem]{Corollary}
\newtheorem{definition}[theorem]{Definition}
\newtheorem{remark}[theorem]{Remark}

\definecolor{rot}{rgb}{0.000,0.000,0.000}

\begin{document}
\renewcommand{\theequation}{\arabic{section}.\arabic{equation}}
\begin{titlepage}
\title{\bf Direct and Inverse Elastic Scattering From Anisotropic Media}


\author{
Gang Bao\ \ ({\sf baog@zju.edu.cn}) \ \\
 {\small School of Mathematical Sciences, Zhejiang University, Hangzhou 310027, China}\\ \\
Guanghui Hu\ \ ({\sf hu@csrc.ac.cn}) \ \\
 {\small Beijing Computational Science Research Center, Beijing 100094, China}\\ \\
Jiguang Sun \ \ ({\sf jiguangs@mtu.edu}) \ \\
{\small Department of Mathematical Sciences, Michigan Technological University, Houghton, MI 49931 U.S.A.} \\ \\
Tao Yin\ \ ({\sf taoyin@zju.edu.cn}) \ \\
 {\small School of Mathematical Sciences, Zhejiang University, Hangzhou 310027, China}
}
\date{}
\end{titlepage}
\maketitle
\vspace{.2in}

\begin{abstract}
Assume a time-harmonic elastic wave is incident onto a penetrable anisotropic body embedded into a homogeneous isotropic background medium. We propose an equivalent variational formulation in a truncated bounded domain and show \textcolor{rot}{the} uniqueness and existence of weak solutions by applying the Fredholm alternative and using properties of the Dirichlet-to-Neumann \textcolor{rot}{map} in both two and three dimensions. The Fr\'echet derivative of the near-field solution operator with respect to the \textcolor{rot}{boundary of the scatterer} is derived. As an application, we design a descent algorithm for recovering the interface from the near-field data of one or several incident directions and frequencies. Numerical examples in 2D are \textcolor{rot}{presented} to show the validity and accuracy of our methods.

\vspace{.2in} {\bf Keywords:} Linear elasticity, Lam\'e system, variational approach, Fr\'echet derivative, Dirichlet-to-Neumann map, inverse scattering.
\end{abstract}

\section{Introduction}
Time-harmonic elastic scattering problems arise from many \textcolor{rot}{mechanic} systems and engineering structures, in which the linear elasticity theory provides an essential tool for analysis and design. \textcolor{rot}{For} an infinite background medium,
the boundary value problem for the Lam\'e system  \textcolor{rot}{can} be reduced to an equivalent system \textcolor{rot}{on}
a bounded \textcolor{rot}{domain}. For instance, \textcolor{rot}{the finite element method for the} scattering problems
usually requires a strongly elliptic variational formulation with a nonlocal boundary condition (see e.g.,\cite{CXZ,BPT}). To truncate the unbounded domain, one needs to derive the so-called Dirichlet-to-Neumann map (or non-reflecting boundary condition, transparent boundary operator) on an artificial boundary as a replacement of the Kupradze radiation condition at infinity. In the literature, \textcolor{rot}{the} DtN map in elastodynamics have been used by some physicists and engineers for simulation (\cite{AGH,OLBC10,GG03,BC,GK1990,GK1995}). However,
properties of the transparent operator have not been sufficiently investigated yet.
These properties are fundamental \textcolor{rot}{for} the strong ellipticity of the sesquilinear form generated
by the variational formulation and \textcolor{rot}{the} well-posedness (existence, uniqueness and stability) of the scattering \textcolor{rot}{problem}. We refer to \cite{CK98,CM, Nedelec} for the treatment of the time-harmonic Helmholtz and Maxwell equations.
Although a nonlocal boundary
condition closely related to the DtN \textcolor{rot}{map} was utilized in \cite{BP2008}, mapping properties of the non-reflecting operator in Sobolev spaces were not involved there. In a recent paper \cite{Li2016}, a special sesquilinear form, which corresponds to the choice of the parameters $\tilde{\lambda}=\lambda+\mu$, $\tilde{\mu}=0$ in Betti's formula (\ref{GStress}),  has been employed to prove well-posedeness of the elastic scattering problem. However, the approach of \cite{Li2016} relies heavily on the boundary condition of the scatterer and applies only to a rigid impenetrable elastic body in two dimensions.

This paper is concerned with both direct and inverse scattering from an anisotropic elastic body in a homogenous isotropic background medium. The first half is devoted to \textcolor{rot}{the} well-posedness in a more general setting.
We propose an equivalent variational formulation \textcolor{rot}{on} a truncated bounded domain, and show \textcolor{rot}{the} uniqueness and existence of weak solutions \textcolor{rot}{for both} inhomogeneous penetrable anisotropic bodies \textcolor{rot}{and impenetrable} scatterers with \textcolor{rot}{various} boundary conditions. In contrast to the Helmholtz equation, the real part of the DtN \textcolor{rot}{map}, $\real \mathcal{T}$, is not negative-definite. Nevertheless, the resulting sesquilinear
form \textcolor{rot}{is still} strongly elliptic since the operator $-\real \mathcal{T}$ can be decomposed into
the sum of a positive-definite operator and a finite-dimensional operator; see Lemma \ref{Lem:generalizedDtN} (ii) and Lemma \ref{Lem:generalizedDtN-3D} (ii).
 Motivated by Betti's formula, we analyze the DtN \textcolor{rot}{map} for the \emph{generalized} stress operator
 which covers \textcolor{rot}{the} usual stress operator (i.e., $\tilde{\lambda}=\lambda$, $\tilde{\mu}=\mu$ in (\ref{GStress})) and the special case discussed in \cite{Li2016,EH}. To prove uniqueness, we verify
 the Rellich's identity in elasticity; see Lemma \ref{lem:Rellich-2D} and Lemma \ref{Lem:generalizedDtN-3D} (iii).
 Our proof \textcolor{rot}{is new in the sense that it generalized} the arguments \textcolor{rot}{in} \cite{Li2016} and \cite[Lemma 5.8]{Hahner} for special cases. The Rellich's identity in periodic structures can be found in \cite{EH2012,EH}.

 The second half of this paper is concerned with the inverse problem of \textcolor{rot}{reconstructing} the shape of an unknown anisotropic body. Relying on the variational arguments presented \textcolor{rot}{in} the first half and those in \cite{Hettlich} and \cite{Kirsch1993},
we derive the Fr\'echet derivative of the solution operator with respect to the scattering surface. A different approach based on the integral equation was used in acoustics \cite{Potthast1996} and in elasticity \cite{Louer2012,C1995}.
  \textcolor{rot}{The} shape derivative can be \textcolor{rot}{used to} design a \textcolor{rot}{nonlinear} optimization approach for shape recovery from the data of several incident directions and frequencies. We employ a decent method \textcolor{rot}{to find} the parameters of the unknown surface in a finite dimensional space. At each iteration step, \textcolor{rot}{the forward problem needs to be solved} and \textcolor{rot}{the correctness of the parameters needs to be evaluated}. Numerical examples in 2D are \textcolor{rot}{presented} to show the validity and accuracy of our inversion algorithms. We refer to the review article \cite{BC} and the recent monograph \cite{Ammari2015} for \textcolor{rot}{various} inverse problems in elasticity and to \cite{B, BLT,BY, Li2016} where \textcolor{rot}{iterative approaches} using multi-frequency data \textcolor{rot}{were} developed.

It is worth noting that there are still two open \textcolor{rot}{questions}. Firstly, how to derive a frequency-dependent estimate of the solution for a star-shaped rigid scatterer? Readers are referred to e.g. \cite{CM} for a wavenumber-dependent estimate in the acoustic case, which was derived based on the use of a Rellich-type identity for the scalar Helmholtz equation. In linear elasticity, the lack of the positivity of  $-\real \mathcal{T}$ leads to essential difficulties in generalizing the arguments of \cite{CM}. Secondly, how to prove \textcolor{rot}{the} well-posedness in a homogenous anisotropic background medium? A new radiation condition at infinity \textcolor{rot}{seems to be necessary}, which should cover the Kupradze radiation condition as a special case. In this paper, the assumption of the isotropic background medium has considerably simplified our arguments. The far-field asymptotics of the Green's tensor
for a transversely isotropic solid was recently analyzed in \cite{Gridin}. However, a radiation condition in the general case seems \textcolor{rot}{unavailable} in the literature.

The remaining part of this paper is organized as following. In Section \ref{sec:2}, we describe the forward scattering model in $\R^N$ ($N=2,3$) and prove the unique solvability using variational arguments. Properties of the DtN \textcolor{rot}{map} in two and three dimensions will be presented in Sections \ref{prof-dtn-2d} and \ref{prof-dtn-3d}, respectively.
In Section \ref{sec:inverse} we derive the Fr\'echet derivative and apply it to \textcolor{rot}{solve the} inverse scattering problems. Numerical tests for both direct and inverse problems will be reported in \textcolor{rot}{Section} \ref{sec:Numerics}.

\section{Well-posedness of \textcolor{rot}{the} direct scattering problems}\label{sec:2}
\subsection{Mathematical formulations}
Suppose that a time-harmonic elastic wave $u^{in}$ (with the time variation of the form $e^{-i\omega t}$ where $\omega>0$ is a fixed frequency) is incident onto an anisotropic elastic body $\Om$
embedded in an infinite homogeneous isotropic background medium in $\R^N$ ($N=2,3$).
It is assumed that $\Om$
is a bounded Lipschitz domain and the exterior $\Omega^c:=\R^N\backslash\ov{\Om}$ of $\Om$ is connected.
In particular, $\Omega$ is allowed to consist of a finite number of disconnected bounded components.
In linear elasticity, the spatially-dependent displacement vector $u(x)=(u_1,u_2,\cdots, u_N)\textcolor{rot}{^\top}(x)$\textcolor{rot}{, where $(\cdot)^\top$ means the transpose,} is governed by the following reduced Lam\'e system
\be\label{I}
\displaystyle\sum_{j,k,l=1}^N \frac{\partial}{\partial x_{\textcolor{rot}{j}}} \left( C_{ijkl}(x)  \frac{\partial u_k(x)}{\partial x_l}  \right)    +\omega^2\rho(x)\, u_i(x)=0\quad \mbox{in}\quad \R^N,\quad i=1,2, \cdots, N.
\en
In (\ref{I}), $u=u^{in}+u^{sc}$ is the total field and $u^{sc}$ is the \textcolor{rot}{scattered} field; $\mathcal{C}=(C_{ijkl})_{i,j,k,l=1}^N$  is a fourth-rank constitutive material tensor of the elastic medium which is physically referred to as the stiffness tensor; $\rho$ is a complex-valued function with the real part $\real \rho>0$ and imaginary part $\Ima \rho\geq 0$, denoting respectively the density and damping parameter of the elastic medium.
The stiffness tensor satisfies the following symmetries for a generic anisotropic elastic material:
\begin{equation}\label{eq:symmetry}
\mbox{major symmetry:}\quad C_{ijkl}=C_{klij},\qquad \mbox{minor symmetries:}\quad C_{ijkl}=C_{jikl}=C_{ijlk},
\end{equation}
for all $i,j,k,l=1,2, \cdots,N$. By Hooke's law,  the stress tensor $\sigma$ relates to the stiffness tensor $\mathcal{C}$ via the identity $\sigma(u):=\mathcal{C}:\nabla u$,
where the action of $\mathcal{C}$ on a matrix $A=(a_{ij})$ is defined as
\ben
\mathcal{C}:A=(\mathcal{C}:A)_{ij}=\displaystyle\sum_{k,l=1}^N C_{ijkl}\; a_{kl}.
\enn
Hence, the elliptic system in (\ref{I}) can be restated as
\be\label{Navier-C}
\nabla\cdot (\mathcal{C}: \nabla u)+\omega^2\rho u=0\quad\mbox{in}\quad \R^N.
\en
Note that in (\ref{I}) we have assumed the continuity of the \emph{stress vector} or \emph{traction}
( the normal component of the stress tensor)  on $\partial\Omega$, i.e.,
 $\mathcal{N}_{\mathcal{C}}^+u=\mathcal{N}_{\mathcal{C}}^-u$ where
\ben
\mathcal{N}_{\mathcal{C}}u:=\nu\cdot \sigma(u)=\left( \sum_{j,k,l=1}^N \nu_j C_{1jkl}  \frac{\partial u_k}{\partial x_l},\;  \sum_{j,k,l=1}^N \nu_j C_{2jkl}  \frac{\partial u_k}{\partial x_l},\, \cdots,\,
\sum_{j,k,l=1}^N \nu_j C_{Njkl}  \frac{\partial u_k}{\partial x_l}
  \right),
\enn
with $\nu=(\nu_1,\nu_2,\ldots,\nu_N)\textcolor{rot}{^\top}\in\mathbb{S}^{N-1}$ denoting the exterior unit normal vector to $\partial\Omega$ and $(\cdot)^\pm$ the limits taken from outside and inside of $\Omega$, respectively.

Since the elastic material in $\Omega^c$ is isotropic and homogeneous, one has
\be\label{C}
 C_{ijkl}(x)=\lambda \delta_{i,j}\delta_{k,l}+\mu ( \delta_{i,k}\delta_{j,l}+ \delta_{i,l}\delta_{j,k}),\quad x\in \Omega^c.
\en
That is, the stiffness tensor of the background medium is characterized by the Lam\'e constants $\lambda$ and $\mu$ which satisfy $\mu>0, N \lambda+2 \mu>0$.
Hence, the stress tensor in $\Omega^c$ takes the simple form
\ben
\sigma(u)=\lambda\, \textbf{I}\, \divv u + 2\mu\, \epsilon(u),\quad \epsilon(u):=\frac{1}{2}\left(\nabla u+\nabla u^\top\right),
\enn where $\textbf{I}$ stands for the $N\times N$ identity \textcolor{rot}{matrix}. Assuming that $\rho(x)\equiv \rho_0$ in $\Omega^c$,
the Lam\'e system (\ref{I}) reduces to the time-harmonic Navier equation
\be\label{eq:Naiver}
\Delta^*\,u+ \omega^2\rho_0  u = 0 \quad \mbox{in}\quad \Omega^c,\quad
 \Delta^*u:=\mu\Delta u+ (\lambda + \mu) \, \grad \divv\, u.
\en
Moreover, the surface traction $\mathcal{N}_{\mathcal{C}}u$ on $\partial \Omega$ takes the more explicit form
$
\mathcal{N}_{\mathcal{C}}\,u=T_{\lambda,\mu}u
$, where
\be\label{stress-2D}
T_{\lambda,\mu}u:=
2 \mu \, \partial_{\nu} u + \lambda \,
\nu \, \divv u
+\mu \nu^{\perp}\,(\partial_2 u_1 - \partial_1 u_2),\quad\,\nu=(\nu_1,\nu_2)\textcolor{rot}{^\top}, \;\nu^{\perp}:=(-\nu_2,\nu_1)\textcolor{rot}{^\top},
\en in two dimensions,
and
\be\label{stress-3D}
T_{\lambda,\mu}u:=2 \mu \, \partial_{\nu} u + \lambda \,
\nu \, \divv u+\mu \nu\times \curl u,\quad \nu=(\nu_1,\nu_2,\nu_3)\textcolor{rot}{^\top},
\en in three dimensions.
Here and also in what follows, we write $T_{\lambda,\mu}u=Tu$ to drop the dependance of $T_{\lambda,\mu}$ on the Lam\'e constants $\lambda$ and $\mu$ of the background medium.
Denote by
\ben
k_s := \omega\sqrt{\rho_0/\mu},\quad k_p = \omega \sqrt{\rho_0/(\lambda + 2\mu)}
\enn
the shear and compressional wave numbers of the background material, respectively.

Since the domain $\Omega^c$ is unbounded, an appropriate radiation condition at infinity must be imposed on $u^{sc}$ to ensure well-posedness of \textcolor{rot}{the} scattering problem. The scattered field in $\Omega^c$ can be decomposed into the sum of the compressional (longitudinal) part ${u}^{sc}_p$ and the shear (transversal) part ${u}^{sc}_s$ as follows (in three dimensions):
\be\label{decomposition}
u^{sc}={u}^{sc}_p+{u}^{sc}_s,\quad
{u}^{sc}_p=-\frac{1}{k_p^2}\,\mbox{grad}\,\mbox{div}\;{u}^{sc},\quad {u}^{sc}_s=\frac{1}{k_s^2}\,\mbox{curl}\,\mbox{curl}\;{u}^{sc}.
\en
In two dimensions, the shear part of the scattered field should be modified as
\be\label{decomposition-2d}
{u}^{sc}_s=\frac{1}{k_s^2}\,\overrightarrow{\mbox{curl}}\,\mbox{curl}\;{u}^{sc},
\en
where the two-dimensional operators \mbox{curl} and $\overrightarrow{\mbox{curl}}$ are defined respectively by
\ben
\mbox{curl}\,v=\partial_1 v_2-\partial_2 v_1,\quad v=(v_1,v_2)\textcolor{rot}{^\top},\qquad \overrightarrow{\mbox{curl}}\; f:=(\partial_2f, -\partial_1f)\textcolor{rot}{^\top}.
\enn
It then follows from the decompositions in (\ref{decomposition}) and (\ref{decomposition-2d}) that
\ben
(\Delta+k_\alpha^2)\, u_\alpha^{sc}=0,\qquad \alpha=p,s,\qquad \divv u_s^{sc}=0
\enn
and
\ben
 \mbox{curl}\,u_p^{sc}=0 \quad\mbox{in\quad 3D},\qquad \overrightarrow{\mbox{curl}}\,u_p^{sc}=0\quad\mbox{in\quad 2D}.
\enn
The scattered field is required to \textcolor{rot}{satisfy} the Kupradze radiation condition (see e.g. \cite{Kupradze})
\be
\label{RadiationCond}
\lim_{r \to \infty} r^{\frac{N-1}{2}}\left(\frac{\partial {u}^{sc}_p}{\partial r}-ik_p {u}_p^{sc}\right) = 0,
\; \lim_{r \to \infty} r^{\frac{N-1}{2}}\left(\frac{\partial {u}^{sc}_s}{\partial r}-ik_s {u}_s^{sc}\right) = 0,
\quad r=|x|
\en
uniformly with respect to all $\hat{{x}}={x}/|{x}|\in\mathbb{S}^{N-1}:=\{{x}\in\R^N:|{x}|=1\}$.
The radiation conditions in (\ref{RadiationCond}) lead to the P-part (longitudinal part) $u^{\infty}_p$ and the S-part (transversal part) $u^{\infty}_s$ of the far-field pattern of $u^{sc}$, given by the asymptotic behavior
\be\label{far}
u^{sc}(x)=\frac{\exp(ik_p|x|)}{|x|^{\frac{N-1}{2}}}\, u^{\infty}_p(\hat{x})+ \frac{\exp(ik_s|x|)}{|x|^{\frac{N-1}{2}}}\, u^{\infty}_s(\hat{x})+ \mathcal{O}(|x|^{-\frac{N+1}{2}}),\quad|x|\rightarrow+\infty,
\en
where, with some normalization, $u^{\infty}_p$ and $u^{\infty}_s$ are the far-field patterns of $u^{sc}_p$ and $u^{sc}_s$, respectively. \textcolor{rot}{We} define the far-field pattern $u^\infty$ of the scattered field $u^{sc}$ as the sum of $u^{\infty}_p$ and $u^{\infty}_s$, that is, $
 u^\infty:=u^{\infty}_p+u^{\infty}_s$.
Since $u^{\infty}_p$
 is normal to $\mathbb{S}^{N-1}$ and $u^{\infty}_s$ is  tangential to $\mathbb{S}^{N-1}$, it holds the relations
 \ben u^{\infty}_p(\hat{x})=(u^\infty(\hat{x})\cdot \hat{x})\,\hat{x},\quad
 u^{\infty}_s(\hat{x})=\left\{\begin{array}{lll}
 \hat{x}\times u^\infty(\hat{x})\times \hat{x}\quad &&\mbox{in}\quad 3D,\\
 (\hat{x}^\bot\cdot u^\infty(\hat{x}))\,\hat{x}^\bot\quad &&\mbox{in}\quad 2D.
 \end{array}\right.
\enn

Throughout this paper we make the following assumptions:
\begin{itemize}
\item[(A1)] There exists $R>0$ such that $\Omega\subset B_R:=\{x\in \R^N: |x|<R\}$ and that $u^{in}$ satisfies the Navier equation (\ref{eq:Naiver}) in $B_R$.

    \item[(A2)] The \textcolor{rot}{stiffness} tensor $\mathcal{C}$ satisfies the
uniform Legendre ellipticity condition
\be\label{LH}
\sum_{i,j,k,l=1}^N C_{ijkl}(x)\, a_{ij} a_{kl} \geq c_0 \sum_{i,j=1}^N |a_{ij}|^2,\quad a_{ij}=a_{ji},\quad c_0>0,
\en for all $x\in \Omega$.
In other words, $(\mathcal{C}(x):A):A\geq c_0 ||A||^2$ for all symmetry matrices $A=(a_{ij})^N_{i,j=1}\in \R^{N\times  N}$. Here $||A||$ means the Frobenius norm of the matrix $A$.
\item[(A3)] $||\rho||_{L^\infty(\Omega)}<\infty$,  and $||C_{ijkl}||_{L^\infty(\Omega)}<\infty$ for all $1\leq i,j,k,l\leq N$.
\end{itemize}

\begin{remark}
 The incident wave $u^{in}$ is allowed to be a linear combination of pressure and shear plane waves of the form
 \be\label{planewave}
 u^{in}(x,d)=\,c_p\,d \exp(ik_p x\cdot d)+c_s\,d^\bot\exp(ik_s x\cdot d),\quad c_p,c_s\in \C,
 \en with $d\in \mathbb{S}^{N-1}$ being the incident direction and $d^\bot\in \mathbb{S}^{N-1}$ satisfying $d^\bot\cdot d=0$.
  It also \textcolor{rot}{can be} elastic point source waves satisfying the equation
  \ben
  \Delta^*\,u^{in}(\cdot; y)+ \omega^2\rho_0  u^{in}(\cdot; y) = \delta(\cdot-y)\,\textbf{a} \quad \mbox{in}\quad \R^N\backslash\{y\},\quad
  \enn
where $y\in \R^N\backslash \overline{B}_R$ represents the location of the source and $\textbf{a}\in \C^N$ denotes the polarization direction. An explicit expression of $u^{in}(\cdot; y)$ is given by $u^{in}(\cdot; y)=\Pi(\cdot,y)\textbf{a}$ where $\Pi$ is the free-space Green's tensor to the Navier equation given by
\be\label{Pi}
\Pi(x,y)=\frac{1}{\mu}\Phi_{k_s}(x,y) \textbf{I}+\frac{1}{\rho_0\omega^2}\, \grad_x\,\grad_x^\top\;\left[\Phi_{k_s}(x,y)-\Phi_{k_p}(x,y) \right],\quad x\neq y.
\en
Here $\Phi_k$ ($k=k_p, k_s$) is the fundamental solution to the Helmholtz equation $(\Delta+k^2) u=0$ in $\R^N$.
It is well-known that
\be\label{source}
\Phi_k(x;y)=\left\{\begin{array}{lll}
\frac{i}{4}H_0^{(1)}(k|x-y|),&& N=2,\\
\frac{e^{ik|x-y|}}{4\pi|x-y|},&& N=3,
\end{array}\qquad x\neq y,
\right.
\en with $H_0^{(1)}(\cdot)$ being the Hankel function of the first kind of order zero.
\end{remark}

Let $H^1(B_R)$ denote the Sobolev space of scalar functions on $B_R$. In the following
 we state the uniqueness and existence of weak solutions to our scattering problem
 in the energy space $X_R:=(H^1(B_R))^N$.
 \begin{theorem}\label{theorem}
 Under the assumptions (A1)-(A3) there exists a unique solution $u\in X_R$ to the scattering problem (\ref{I}), (\ref{eq:Naiver}) and (\ref{RadiationCond}).
 \end{theorem}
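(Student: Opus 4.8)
Since $u^{sc}=u-u^{in}$ solves the homogeneous Navier equation (\ref{eq:Naiver}) and the Kupradze radiation condition (\ref{RadiationCond}) in $\R^N\setminus\overline{B}_R$, I would first introduce the Dirichlet-to-Neumann operator $\mathcal{T}$ on $\partial B_R$, defined by $\mathcal{T}\varphi:=Tw|_{\partial B_R}$ where $w$ is the unique radiating solution of (\ref{eq:Naiver}) with $w|_{\partial B_R}=\varphi$ (constructed by separation of variables via the decomposition (\ref{decomposition})--(\ref{decomposition-2d})). This replaces (\ref{RadiationCond}) by the transparent boundary condition $Tu=\mathcal{T}u+(Tu^{in}-\mathcal{T}u^{in})$ on $\partial B_R$. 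Multiplying (\ref{Navier-C}) by $\overline{v}$, $v\in X_R$, integrating by parts in $\Omega$ and $B_R\setminus\overline\Omega$ separately, and using the assumed continuity of the traction $\mathcal{N}_{\mathcal{C}}^+u=\mathcal{N}_{\mathcal{C}}^-u$ across $\partial\Omega$ to cancel the interface contributions, I arrive at the sesquilinear form
\ben
a(u,v):=\int_{B_R}\Big[(\mathcal{C}:\nabla u):\overline{\nabla v}-\omega^2\rho\,u\cdot\overline{v}\Big]\,dx-\langle\mathcal{T}u,v\rangle_{\partial B_R},
\enn
and the equivalent weak problem: find $u\in X_R$ such that $a(u,v)=\langle Tu^{in}-\mathcal{T}u^{in},v\rangle_{\partial B_R}$ for every $v\in X_R$. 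One then checks that a solution of this problem, continued outside $B_R$ as the radiating extension, solves the original scattering problem, and conversely.

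\textbf{Step 2 (G\aa{}rding inequality and the Fredholm alternative).} Next I would verify that $a$ is strongly elliptic. By the minor symmetry in (\ref{eq:symmetry}), $\mathcal{C}:\nabla u=\mathcal{C}:\epsilon(u)$, so the Legendre ellipticity (A2) in $\Omega$ together with $\mu>0$, $N\lambda+2\mu>0$ in $B_R\setminus\overline\Omega$ gives $\real\int_{B_R}(\mathcal{C}:\nabla u):\overline{\nabla u}\,dx\ge c\,\|\epsilon(u)\|_{(L^2(B_R))^{N\times N}}^2$; Korn's inequality upgrades this to $\ge c_1\|u\|_{X_R}^2-c_2\|u\|_{(L^2(B_R))^N}^2$. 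The volume term $-\omega^2\int_{B_R}\rho\,|u|^2$ is a compact perturbation via the compact embedding $X_R\hookrightarrow(L^2(B_R))^N$. For the boundary term I split $\mathcal{T}=\real\mathcal{T}+i\,\Ima\mathcal{T}$ and invoke Lemma \ref{Lem:generalizedDtN}(ii) (for $N=2$) and Lemma \ref{Lem:generalizedDtN-3D}(ii) (for $N=3$), by which $-\real\mathcal{T}$ is the sum of a positive semidefinite operator and a finite-rank operator; the positive part reinforces coercivity while the finite-rank part and the smoothing operator $\Ima\mathcal{T}$ are compact. Combining these, I obtain a G\aa{}rding inequality $\real\,a(u,u)\ge c_1\|u\|_{X_R}^2-\real\langle\mathcal{K}u,u\rangle$ with $\mathcal{K}$ compact on $X_R$. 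Hence the operator associated with $a$ is Fredholm of index zero, and existence will follow from uniqueness.

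\textbf{Step 3 (uniqueness).} To prove uniqueness I set $u^{in}=0$, so that $a(u,u)=0$. Because $\mathcal{C}$ is real with the symmetries (\ref{eq:symmetry}), the principal term is real, and taking imaginary parts yields $\omega^2\int_{B_R}\Ima\rho\,|u|^2\,dx+\Ima\langle\mathcal{T}u,u\rangle_{\partial B_R}=0$. By the DtN properties obtained from the elastic Rellich identity (Lemma \ref{lem:Rellich-2D} and Lemma \ref{Lem:generalizedDtN-3D}(iii)), $\Ima\langle\mathcal{T}u,u\rangle_{\partial B_R}$ is a positive multiple of the far-field energy $\|u^\infty_p\|^2+\|u^\infty_s\|^2$; since $\Ima\rho\ge0$, both terms must vanish and in particular $u^\infty\equiv0$. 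Decomposing $u=u^{sc}$ in $\Omega^c$ via (\ref{decomposition})--(\ref{decomposition-2d}) into its P- and S-parts, which satisfy the Helmholtz equations with numbers $k_p,k_s$ and have vanishing far-field patterns, Rellich's lemma forces $u^{sc}=0$ for $|x|>R$; unique continuation for the constant-coefficient Navier system propagates this to all of the connected exterior $\Omega^c$. The transmission conditions then give $u=\mathcal{N}_{\mathcal{C}}u=0$ on $\partial\Omega$ from inside, so the zero extension of $u$ is a compactly supported weak solution of (\ref{Navier-C}) in $\R^N$; a unique continuation argument (immediate when $\Ima\rho>0$ in $\Omega$, since then $\int_\Omega\Ima\rho\,|u|^2=0$ already forces $u=0$) yields $u\equiv0$.

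\textbf{Main obstacle.} The decisive step, and the one where elasticity genuinely differs from the scalar Helmholtz theory, is the coercivity in Step 2: unlike the acoustic case, $\real\mathcal{T}$ is not negative semidefinite, so the G\aa{}rding inequality is not automatic and hinges on isolating the finite-rank, indefinite piece of $-\real\mathcal{T}$ (Lemmas \ref{Lem:generalizedDtN}(ii) and \ref{Lem:generalizedDtN-3D}(ii)). Correspondingly, the sign of $\Ima\langle\mathcal{T}u,u\rangle$ used in Step 3 is not evident and must be extracted from the elastic Rellich identity (Lemmas \ref{lem:Rellich-2D} and \ref{Lem:generalizedDtN-3D}(iii)). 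I therefore expect the mapping and spectral properties of the elastic DtN map to carry the entire technical weight of the proof; once they are in hand, the reduction, the Fredholm argument, and the uniqueness proof go through as sketched.
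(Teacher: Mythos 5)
Your proposal follows essentially the same route as the paper: reduction to $B_R$ via the DtN map and Betti's formula, strong ellipticity and the Fredholm alternative based on decomposing $-\real\mathcal{T}$ into a positive part plus a finite-rank part (Lemma \ref{Lem:generalizedDtN}(ii), Lemma \ref{Lem:generalizedDtN-3D}(ii)), and uniqueness from the imaginary part of $a(u,u)$ together with the elastic Rellich lemma (Lemma \ref{lem:Rellich-2D}, Lemma \ref{Lem:generalizedDtN-3D}(iii)). Your Steps 1--3 correspond exactly to the paper's Section 2.2, Theorem \ref{strong-elliptic}, and the proof of Theorem \ref{theorem}; you merely spell out details the paper leaves implicit, namely Korn's inequality in the G\aa{}rding estimate and the propagation of $u^{sc}\equiv 0$ from $|x|\ge R$ back into $B_R$.
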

 The proof of Theorem \ref{theorem} will depend on the Fredholm alternative together with properties of the Dirichlet-to-Neumann mapping on $\Gamma_R\textcolor{rot}{:=\partial B_R}$. As a consequence, we also obtain the well-posedness of the scattering problem due to an impenetrable elastic body with various kinds of boundary conditions.

\begin{corollary}\label{corollary}
Consider the time-harmonic elastic scattering from an impenetrable bounded elastic body $\Omega$ with Lipschitz boundary embedded \textcolor{rot}{in} a homogeneous isotropic medium. Suppose that the total field satisfies
one of the following boundary conditions \textcolor{rot}{on} $\partial \Omega$:
\begin{description}
\item[(i)] The first kind (Dirichlet)  boundary condition: $u=0$;
\item[(ii)] The second kind (Neumann) boundary condition: $Tu=0$;
\item[(iii)] The third kind  boundary condition: $\textcolor{rot}{\nu\cdot u=0, \nu\times Tu=0}$ in 3D, $\nu\cdot u=\nu^\bot\cdot Tu=0$ in 2D;
\item[(iv)] The fourth kind  boundary condition: $\textcolor{rot}{\nu\times u=0, \nu\cdot Tu=0}$ in 3D, $\nu^\bot\cdot u=\nu\cdot Tu=0$ in 2D;
\item[(v)] Robin boundary condition: $Tu-i\eta u=0$,\quad $\eta\in\C$, $\mbox{Re}(\eta)>0$.
\end{description}
 Then the scattered field $u^{sc}=u-u^{in}$ is uniquely solvable in $(H^1_{loc}(\R^N\backslash\overline{\Omega}))^N$.
\end{corollary}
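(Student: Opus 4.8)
The plan is to mirror the proof of Theorem~\ref{theorem}, replacing the transmission problem on $B_R$ by a boundary value problem on the annular domain $B_R\backslash\overline{\Omega}$, in which the medium is now homogeneous and isotropic throughout. For each boundary condition I would first introduce the appropriate energy space, encoding the \emph{essential} (kinematic) constraints directly into it:
\[
V:=\{v\in (H^1(B_R\backslash\overline{\Omega}))^N:\ Bv=0\ \text{on}\ \partial\Omega\},
\]
where $Bv=v$ for (i), $Bv=\nu\cdot v$ for (iii), and $Bv=\nu\times v$ (3D) or $\nu^\bot\cdot v$ (2D) for (iv); for the \emph{natural} conditions (ii) and (v) no constraint is imposed, so $V=(H^1(B_R\backslash\overline{\Omega}))^N$. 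Multiplying the Navier equation by a test function $\overline{v}$, integrating by parts via Betti's formula and replacing $Tu$ on $\Gamma_R$ by $\mathcal{T}u$, I obtain a sesquilinear form
\[
a(u,v)=\int_{B_R\backslash\overline{\Omega}}\big[(\mathcal{C}:\nabla u):\overline{\nabla v}-\omega^2\rho_0\,u\cdot\overline{v}\big]\,dx-\langle \mathcal{T}u,v\rangle_{\Gamma_R}-i\eta\int_{\partial\Omega}u\cdot\overline{v}\,ds,
\]
the last term being present only in the Robin case (v); for (ii), (iii), (iv) the boundary integral on $\partial\Omega$ drops out because the remaining traction components are paired against vanishing kinematic components.

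Next I would show that $a(\cdot,\cdot)$ is strongly elliptic on $V$. The volume part is coercive up to a compact term by Korn's inequality (the background is isotropic with $\mu>0,\ N\lambda+2\mu>0$), while the $\omega^2\rho_0$ term is a compact perturbation via the compact embedding $(H^1)^N\hookrightarrow(L^2)^N$. The DtN contribution is handled exactly as in the proof of Theorem~\ref{theorem}: by Lemma~\ref{Lem:generalizedDtN}(ii) and Lemma~\ref{Lem:generalizedDtN-3D}(ii) the operator $-\real\mathcal{T}$ splits into a positive-definite part plus a finite-rank (hence compact) operator, yielding a G\aa rding inequality $\real\,a(u,u)\ge c\,\|u\|_{(H^1)^N}^2-C\,\|u\|_{(L^2)^N}^2$. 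The Robin boundary term is compact on $\partial\Omega$ by compactness of the trace. Hence $a$ is Fredholm of index zero, and by the Fredholm alternative existence follows once uniqueness is established.

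For uniqueness I would take the homogeneous problem ($u^{in}=0$) and test against $u$ itself. The volume terms are real, so taking imaginary parts leaves
\[
\Ima\langle \mathcal{T}u,u\rangle_{\Gamma_R}+\real\eta\int_{\partial\Omega}|u|^2\,ds=0,
\]
where the second term appears only for (v), and the $\partial\Omega$ contribution vanishes identically for (i)--(iv) because of the paired kinematic/traction conditions. Since $\Ima\langle \mathcal{T}u,u\rangle_{\Gamma_R}\ge 0$ represents the outgoing energy flux and vanishes only when the far-field pattern $u^\infty$ vanishes, both terms must be zero. This forces $u^\infty\equiv 0$, hence $u\equiv 0$ in $\R^N\backslash\overline{B}_R$ by Rellich's lemma in elasticity, and then $u\equiv 0$ throughout $B_R\backslash\overline{\Omega}$ by unique continuation for the Navier equation; in case (v) the identity additionally gives $u=0$ on $\partial\Omega$, consistent with the conclusion.

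The main obstacle is the uniqueness step for the mixed conditions (iii) and (iv): one must verify that the traction/displacement pairing on $\partial\Omega$ splits cleanly into normal and tangential parts so that $\int_{\partial\Omega}(Tu)\cdot\overline{u}\,ds=0$, which requires care with the trace spaces $H^{\pm 1/2}(\partial\Omega)$ on a Lipschitz boundary and with the identities relating $\nu\cdot u$ and $\nu\times u$ to the corresponding components of $Tu$. The remaining ingredient---that $\Ima\langle \mathcal{T}u,u\rangle_{\Gamma_R}=0$ implies $u^\infty=0$---is precisely the property of the DtN map underpinning Theorem~\ref{theorem}, and the elastic Rellich identity of Lemma~\ref{lem:Rellich-2D} (and its 3D counterpart in Lemma~\ref{Lem:generalizedDtN-3D}(iii)) then closes the argument.
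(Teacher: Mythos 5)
Your overall framework is exactly what the paper intends: the paper gives no detailed proof of Corollary~\ref{corollary}, saying only that ``the variational approach for proving Theorem~\ref{theorem} can be easily adapted,'' and your proposal --- essential boundary conditions built into the energy space on $B_R\backslash\overline{\Omega}$, a G\aa rding inequality from Korn's inequality plus the decomposition $-\mathcal{T}=\mathcal{T}_1+\mathcal{T}_2$ of Lemma~\ref{DtN-lemma}, the Fredholm alternative, and uniqueness via the imaginary-part identity combined with Lemma~\ref{lem:Rellich-2D} / Lemma~\ref{Lem:generalizedDtN-3D}(iii) and analyticity of solutions of the constant-coefficient Navier equation --- is precisely that adaptation, and it is correct for cases (i)--(iv).

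There is, however, a genuine sign problem in case (v). On $\partial\Omega$ the outward normal of the computational domain $B_R\backslash\overline{\Omega}$ is $-\nu$, where $\nu$ is the normal exterior to $\Omega$ used in the statement of the boundary condition. Betti's formula on the annulus therefore produces the boundary term $+\int_{\partial\Omega}T u\cdot\overline{v}\,ds$, and substituting the stated condition $Tu=i\eta u$ yields $+\,i\eta\int_{\partial\Omega}u\cdot\overline{v}\,ds$ in the sesquilinear form --- the opposite of the $-\,i\eta\int_{\partial\Omega}u\cdot\overline{v}\,ds$ you wrote. With the sign that actually follows from the stated condition, taking imaginary parts in the homogeneous problem gives
\[
\Ima\langle\mathcal{T}u,u\rangle_{\Gamma_R}=\real\eta\int_{\partial\Omega}|u|^2\,ds,
\]
an equality of two nonnegative quantities, from which nothing follows; your argument closes only because your sign flip converts this into a sum of two nonnegative terms equal to zero. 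This is not cosmetic: in the acoustic analogue the ``wrong-sign'' impedance condition $\partial_\nu u-i\eta u=0$ outside a ball of radius $a$ admits the nontrivial radiating solution $u=h_0^{(1)}(kr)$ with $\eta=k+i/a$, $\real\eta=k>0$, so no energy argument can establish uniqueness for that sign. To prove case (v) you must either interpret the traction in the Robin condition with respect to the normal pointing out of the exterior domain (i.e. $-\nu$), or read the condition as the dissipative one $Tu+i\eta u=0$; with that convention your computation and conclusion are correct. State this convention explicitly rather than letting it enter through an unacknowledged change of sign.
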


The variational approach for proving Theorem \ref{theorem} can be easily adapted to treat the boundary value problems in Corollary \ref{corollary}. We omit the details for simplicity and refer to \cite{EH} for the proof in unbounded periodic structures.
\begin{remark} Using integral equation methods,
well-posedness of the boundary value problems in Corollary \ref{corollary} has been investigated in Kupradze \cite{Kupradze,TC} for scatterers with $C^2$-smooth boundaries. The variational arguments presented here have thus relaxed the regularity of the boundary to be Lipschitz.
\end{remark}

\subsection{Variational formulation with transparent boundary operator}
Let $R>0$ be specified in assumption (A1). \textcolor{rot}{By} the first Betti\textcolor{rot}{'s} formula, it follows that for $u, v\in X_R$,
\be\label{Betti}
-\int_{B_R}[\nabla\cdot (\mathcal{C}: \nabla u)+\omega^2\rho u]\cdot\overline{v}\,dx=
\int_{B_R}[(\mathcal{C}: \nabla u) :\nabla \overline{v}-\omega^2\rho\; u\cdot\overline{v}]\,dx-\int_{\Gamma_R} Tu\cdot\overline{v}\,ds.
\en
Below we introduce the Dirichlet-to-Neumann (DtN) map in a homogeneous isotropic background medium, allowing us to reduce the scattering problem \textcolor{rot}{on a bounded domain}.
\begin{definition}
For any $w\in \textcolor{rot}{(H^{1/2}(\G_R))^N}$,  the DtN map $\mathcal{T}$ acting on $w$ is defined as $$\mathcal{T}w:=(Tv^{sc})|_{\G_R},$$
where $v^{sc}\in \textcolor{rot}{(H^1_{loc}(\R^N\backslash\overline{B}_R))^N}$ is the unique radiating solution to the boundary value problem
\be
\label{DtN}
\Delta^{*}v^{sc} +   \omega^2\rho_0v^{sc} = 0\quad\mbox{in}\quad \R^N\backslash\ov{B_R},\qquad
v^{sc} = w \quad\mbox{on}\quad \G_R.
\en
\end{definition}
\begin{remark}
The DtN map $\mathcal{T}$ is well-defined, since the Dirichlet-kind boundary value problem (\ref{DtN}) is uniquely solvable in $\textcolor{rot}{(H^1_{loc}(\R^N\backslash\overline {B_R}))^N}$; see Remarks \ref{Remark-2} and \ref{Remark-3} for the explicit expressions in terms of special functions.
\end{remark}

To obtain an equivalent variational formulation of (\ref{I}), we shall apply Betti's identity (\ref{Betti}) to a solution $u=u^{in}+u^{sc}$ in $B_R$ and use the relation
\ben
Tu=Tu^{in}+Tu^{sc}=Tu^{in}+\mathcal{T}u^{sc}=f+\mathcal{T}u,\quad f:=(Tu^{in}-\mathcal{T}u^{in})|_{\Gamma_R}.
\enn
Then the variational formulation
reads as follows: find $u=(u_1,\cdots,u_N)\in X_R$ such that
\be\label{variational}
a(u,v)=\int_{\G_R} f\cdot\overline{v}\,ds\qquad\qquad\mbox{for all}\quad v=(v_1,v_2,\cdots,v_N)\textcolor{rot}{^\top}\in X_R,
\en
where the sesquilinear form $a(\cdot,\cdot): X_R\times X_R\rightarrow \C $ is defined by
\be\label{sesquilinear}
a(u,v):=\int_{B_R}\left\{  \displaystyle\sum_{i,j,k,l=1}^N  C_{ijkl}  \frac{\partial u_k}{\partial x_l} \frac{\partial \overline{v_i}}{\partial x_j}
 -\omega^2\,\rho\, u_i\overline{v}_i\right\} dx-\int_{\G_R} \mathcal{T}u\cdot\overline{v}\,ds.
\en
\begin{remark}
The variational problem (\ref{variational}) and \textcolor{rot}{the} scattering problem (\ref{I}), (\ref{eq:Naiver}), (\ref{RadiationCond})
are equivalent in the following sense. If $u^{sc}\in (H^1_{loc}(\R^N))^N$ is a solution of \textcolor{rot}{the} scattering problem \textcolor{rot}{(\ref{I}), (\ref{eq:Naiver}) and (\ref{RadiationCond})}, then
the restriction of the total field $u$ to $B_R$, i.e., $u|_{B_R}$, satisfies the variational problem (\ref{variational}). Conversely, a solution $u\in X_R$ of (\ref{variational}) can be extended to a solution $u=u^{in}+u^{sc}$ of the Lam\'e system in $|x|>R$, where $u^{sc}$ is defined as the unique radiating solution to the isotropic Lam\'e system in $|x|>R$ satisfying the Dirichlet boundary value $u^{sc}=u-u^{in}$ on $\Gamma_R$.
\end{remark}
In the following lemma, we show
properties of the DtN map $\mathcal{T}$ which play an essential role in our uniqueness and existence proofs. The two and three dimensional proofs will be carried out \textcolor{rot}{in} the subsequent Sections \ref{prof-dtn-2d} and \ref{prof-dtn-3d}\textcolor{rot}{, respectively}.
\begin{lemma}\label{DtN-lemma}
\begin{description}
\item[(i)] $\mathcal{T}$ is a bounded operator from $(H^{1/2}(\Gamma_R))^N$ to $(H^{-1/2}(\Gamma_R))^N$.
\item[(ii)] The operator $-\mathcal{T}$ can be decomposed into the sum of a positive operator $\textcolor{rot}{\mathcal{T}_1}$ and a compact operator $\mathcal{T}_2$, that is, $-\mathcal{T}=\mathcal{T}_1+\mathcal{T}_2$ on $(H^{1/2}(\Gamma_R))^N$.
\end{description}
\end{lemma}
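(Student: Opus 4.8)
The plan is to reduce $\mathcal{T}$ to an explicit Fourier (in 2D) or spherical-harmonic (in 3D) symbol and then analyze its behavior mode by mode. First I would use the Helmholtz decomposition of the radiating solution $v^{sc}=v_p^{sc}+v_s^{sc}$ of (\ref{DtN}), where the compressional and shear parts satisfy scalar Helmholtz equations with wavenumbers $k_p$ and $k_s$ and the radiation condition (\ref{RadiationCond}). Separating variables on $\G_R$, each scalar potential expands in the angular basis with radial factors given by the outgoing Hankel functions $H_n^{(1)}(k_\alpha R)$ ($\alpha=p,s$) in 2D, and by spherical Hankel functions together with vector spherical harmonics in 3D. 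Substituting these expansions into the traction operator $T$ of (\ref{stress-2D})--(\ref{stress-3D}) and restricting to $\G_R$ diagonalizes $\mathcal{T}$ into a sequence of small matrix blocks $M_n$ acting on the $n$-th angular mode, the entries of $M_n$ being explicit combinations of $H_n^{(1)}(k_\alpha R)$, their radial derivatives, and the mode index $n$. This is precisely the computation deferred to Sections \ref{prof-dtn-2d} and \ref{prof-dtn-3d} (see Remarks \ref{Remark-2} and \ref{Remark-3}).

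For part (i), boundedness from $(H^{1/2}(\G_R))^N$ to $(H^{-1/2}(\G_R))^N$ is equivalent to the uniform estimate $\|M_n\|\leq C(1+n)$ on the blocks, since the $H^s(\G_R)$ norms are the weighted $\ell^2$ norms of the angular coefficients with weight $(1+n^2)^{s/2}$, so that a symbol growing linearly in $n$ maps $H^{1/2}$ into $H^{-1/2}$ (one power of $(1+n)$ is gained passing between the two spaces). I would obtain this bound from the large-order asymptotics of Hankel functions: for fixed argument $z$ and $n\to\infty$ one has $H_n^{(1)\prime}(z)/H_n^{(1)}(z)\sim -n/z$, so that after cancellations each entry of $M_n$ grows at most linearly in $n$. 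Hence $\mathcal{T}$ is an operator of order $+1$, which yields (i).

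For part (ii), the idea is to peel off the principal symbol. I would compute the limit of $M_n/n$ as $n\to\infty$; this produces a mode-independent positive-definite block encoding the coercive, high-frequency part of the elastic energy. I would then define $\mathcal{T}_1$ to be the associated self-adjoint order-$+1$ operator, normalized so that $\langle \mathcal{T}_1 w,w\rangle \geq c\,\|w\|_{H^{1/2}}^2$, and set $\mathcal{T}_2:=-\mathcal{T}-\mathcal{T}_1$. By construction the blocks $M_n^{(2)}$ of $\mathcal{T}_2$ satisfy $\|M_n^{(2)}\|\leq C$ uniformly in $n$, so $\mathcal{T}_2$ is of order $0$ and maps $(H^{1/2}(\G_R))^N$ boundedly into itself; since $\G_R$ is compact, the embedding $(H^{1/2}(\G_R))^N\hookrightarrow(H^{-1/2}(\G_R))^N$ is compact, and therefore $\mathcal{T}_2$ is compact as a map into $(H^{-1/2}(\G_R))^N$.

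The main obstacle I expect is the positivity of the principal part. As remarked after Lemma \ref{DtN-lemma}, $\real\mathcal{T}$ is \emph{not} sign-definite: for finitely many low-order modes the blocks $M_n$ can fail to be positive, and the coupling of the $P$- and $S$-parts inside the traction operator makes the sign structure of each individual $M_n$ delicate. The correct bookkeeping is to prove positivity only asymptotically in $n$ and to absorb the finitely many exceptional low modes into the bounded, hence compact, remainder $\mathcal{T}_2$. Verifying that the limiting block is genuinely positive definite—rather than merely nonnegative—and controlling the subleading Hankel-ratio asymptotics uniformly in $n$ are the technically demanding steps, which I would carry out separately in the two- and three-dimensional settings.
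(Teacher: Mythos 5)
Your proposal is correct, and for the reduction to a modal symbol and for part (i) it coincides with the paper's own proof: Sections \ref{prof-dtn-2d} and \ref{prof-dtn-3d} carry out exactly the Helmholtz-decomposition-plus-separation-of-variables computation you describe, obtain the blocks $W_n=\frac{1}{R}B_nA_n^{-1}$ (Lemma \ref{lem:An} also checks $\det A_n\neq 0$ via the Wronskian identity, so each block is well defined), and prove $\|W_n\|_{\mbox{max}}\le C(1+|n|)$ from large-order Hankel asymptotics, including the nontrivial cancellations you allude to (the determinant $\Lambda_n$, naively $O(n^2)$, collapses to $O(1)$, and the $O(n^2)$ terms in $B_nA_n^{-1}$ cancel). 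Where you genuinely deviate is the decomposition in part (ii). The paper takes $\mathcal{T}_1$ to be the restriction of $-\mathcal{T}$ to the high modes $|n|\ge M$ and $\mathcal{T}_2$ the remaining low-mode piece, which is finite-rank and hence trivially compact; positivity of $\mathcal{T}_1$ is proved by showing the Hermitian part $\widetilde{W}_n=-(W_n+W_n^*)/2$ is positive definite for every sufficiently large $|n|$, via its $(1,1)$ entry and determinant (Lemmas \ref{Lem:generalizedDtN} and \ref{Lem:generalizedDtN-3D}). You instead subtract the principal symbol: your $\mathcal{T}_1$ uses the mode-independent limiting block $P=\lim_{|n|\to\infty}(-W_n/|n|)$, weighted by $(1+|n|)$, and $\mathcal{T}_2:=-\mathcal{T}-\mathcal{T}_1$ then has uniformly bounded blocks, so it is of order zero and compact into $(H^{-1/2}(\Gamma_R))^N$ by the compact embedding of $(H^{1/2}(\Gamma_R))^N$. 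Both routes rest on the same asymptotics of $W_n$; yours localizes the positivity question in a single constant Hermitian matrix (note the skew part of $W_n$ is $O(1)$, so the limit is automatically Hermitian, and in the case $\tilde{\lambda}=\lambda$, $\tilde{\mu}=\mu$ relevant to Lemma \ref{DtN-lemma} its positive definiteness reduces to $\lambda+\mu>0$), and it yields a $\mathcal{T}_1$ coercive on all of $(H^{1/2}(\Gamma_R))^N$; the paper's frequency cutoff avoids the Rellich embedding altogether and produces a finite-dimensional $\mathcal{T}_2$, which is exactly the decomposition advertised in the introduction. The one step you defer --- uniformity in $n$ of the $O(1)$ remainders after cancellation --- is precisely what the paper's explicit entrywise computation supplies, so your outline is complete modulo that computation.
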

Let $X_R'$ denote the dual of $X_R$ with respect to the inner product of $(L^2(B_R))^N$. By the boundedness of $\rho$,
$C_{ijkl}$ (see Assumption (A3)) and $\mathcal{T}$,  there exists a continuous linear operator $\mathcal{A}: X_R\rightarrow X_R'$ associated with the sesquilinear form $a$ such that
\be\label{A}
a(u,v)=<\mathcal{A}u,v>\qquad\mbox{for all}\quad v\in X_R.
\en Here and henceforth the notation $<\cdot,\cdot>$ denotes the duality between $X_R'$ and $X_R$.
By Assumption (A1) and Lemma \ref{DtN-lemma} (ii), there exists  $\mathcal{F}\in X_R'$ such that
\ben
\int_{\G_R} f\cdot\overline{v}\,ds=<\mathcal{F},v>\qquad\mbox{for all}\quad v\in X_R.
\enn
Hence the variational formulation (\ref{variational}) can be written as an operator equation \textcolor{rot}{of} finding $u\in X_R$ such that
\ben
\mathcal{A}\,u=\mathcal{F}\qquad\mbox{in}\quad X_R'.
\enn
Below we recall the definition of strong ellipticity.
\begin{definition}
A bounded sesquilinear form $a(\cdot,\cdot)$ \textcolor{rot}{on} some Hilbert space $X$ is called strongly elliptic if there
exists a compact form $q(\cdot,\cdot)$ such that
\ben
|\real\, a(u,u)|\geq C\,||u||^2_{X}-q(u,u) \qquad\mbox{for all}\quad u\in X.
\enn
\end{definition}

The following theorem establishes the strong ellipticity of the sesquilinear form $a$ defined \textcolor{rot}{by} (\ref{sesquilinear}).
\begin{theorem}\label{strong-elliptic}
The sesquilinear form $a(\cdot,\cdot)$ is strongly elliptic over $X_R$ under the Assumption (A2).
Moreover, the operator $\mathcal{A}: X_R\rightarrow X_R'$ defined by (\ref{A}) \textcolor{rot}{is} a Fredholm operator with index zero.
\end{theorem}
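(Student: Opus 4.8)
The plan is to verify Gårding's inequality directly from the definition of strong ellipticity by testing the sesquilinear form against $v=u$, and then to deduce the Fredholm property from the standard theory of compact perturbations of coercive forms. Setting $v=u$ in (\ref{sesquilinear}) and taking real parts, I would split $\real a(u,u)$ into three contributions: the anisotropic elastic energy $\real\int_{B_R}(\mathcal{C}:\nabla u):\nabla\overline{u}\,dx$, the lower-order mass term $-\omega^2\int_{B_R}(\real\rho)\,|u|^2\,dx$, and the boundary term $-\real\int_{\Gamma_R}\mathcal{T}u\cdot\overline{u}\,ds$. The goal is to bound the first contribution below by a positive multiple of $\|u\|^2_{X_R}$, up to compact remainders, and to absorb the remaining two into a compact form $q$.

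For the elastic energy, I would first use the minor symmetries $C_{ijkl}=C_{jikl}=C_{ijlk}$ to replace $\nabla u$ and $\nabla\overline{u}$ by their symmetric parts, so that the integrand becomes $\sum_{i,j,k,l}C_{ijkl}\,\epsilon_{kl}(u)\,\overline{\epsilon_{ij}(u)}$. Writing $\epsilon(u)=A+iB$ with $A,B$ real symmetric, the major symmetry $C_{ijkl}=C_{klij}$ forces the imaginary cross-terms to cancel, so that this quantity is real and equals $\sum C_{ijkl}(A_{ij}A_{kl}+B_{ij}B_{kl})$. The uniform Legendre ellipticity (\ref{LH}) then applies to the real symmetric matrices $A$ and $B$ separately, giving the pointwise bound $\geq c_0(\|A\|^2+\|B\|^2)=c_0\|\epsilon(u)\|^2$, and hence $\real\int_{B_R}(\mathcal{C}:\nabla u):\nabla\overline{u}\,dx\geq c_0\,\|\epsilon(u)\|^2_{L^2(B_R)}$. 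Invoking the second Korn inequality on the bounded Lipschitz domain $B_R$, there is a constant $C_K>0$ with $\|u\|^2_{X_R}\leq C_K(\|\epsilon(u)\|^2_{L^2(B_R)}+\|u\|^2_{L^2(B_R)})$, which converts this into a coercive lower bound $\geq (c_0/C_K)\|u\|^2_{X_R}-c_0\|u\|^2_{L^2(B_R)}$.

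It then remains to dispose of the two lower-order pieces. The mass term is bounded in modulus by $\omega^2\|\rho\|_{L^\infty(\Omega)}\|u\|^2_{L^2(B_R)}$, and $\|u\|^2_{L^2(B_R)}$ defines a compact form on $X_R$ because the embedding $X_R\hookrightarrow (L^2(B_R))^N$ is compact by Rellich's theorem. For the boundary term I would invoke Lemma \ref{DtN-lemma}(ii), writing $-\mathcal{T}=\mathcal{T}_1+\mathcal{T}_2$; the positivity of $\mathcal{T}_1$ gives $\real\int_{\Gamma_R}\mathcal{T}_1 u\cdot\overline{u}\,ds\geq 0$, so this term may be discarded from the lower bound, while the compactness of $\mathcal{T}_2$ from $(H^{1/2}(\Gamma_R))^N$ to $(H^{-1/2}(\Gamma_R))^N$, composed with the bounded trace operator, shows that $(u,v)\mapsto\int_{\Gamma_R}\mathcal{T}_2 u\cdot\overline{v}\,ds$ is a compact form. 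Collecting the compact contributions into $q(u,u):=(c_0+\omega^2\|\rho\|_{L^\infty(\Omega)})\|u\|^2_{L^2(B_R)}-\real\int_{\Gamma_R}\mathcal{T}_2 u\cdot\overline{u}\,ds$, I obtain $\real a(u,u)\geq (c_0/C_K)\|u\|^2_{X_R}-q(u,u)$, and since $|\real a(u,u)|\geq\real a(u,u)$, this is precisely strong ellipticity with $C=c_0/C_K$.

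Finally, for the Fredholm claim I would decompose $a=b-k$, where $b(u,v):=a(u,v)+q_{\mathrm{s}}(u,v)$ is the coercive form obtained by adding the bounded sesquilinear form $q_{\mathrm{s}}$ whose diagonal recovers $q$, and $k$ is its compact counterpart. By the Lax--Milgram theorem the operator $\mathcal{B}:X_R\to X_R'$ associated with $b$ is an isomorphism, while the operator $\mathcal{K}$ associated with $k$ is compact. Writing $\mathcal{A}=\mathcal{B}-\mathcal{K}=\mathcal{B}(I-\mathcal{B}^{-1}\mathcal{K})$ exhibits $\mathcal{A}$ as the composition of an isomorphism with a compact perturbation of the identity, so $\mathcal{A}$ is Fredholm of index zero. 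The main obstacle in the argument is the handling of the anisotropic energy: because (\ref{LH}) is postulated only for real symmetric matrices, one must carefully exploit both the minor and major symmetries to reduce the complex, full-gradient quadratic form to the strain energy and to split it across the real and imaginary parts before Korn's inequality can be applied; the indefiniteness of $\real\mathcal{T}$ is not an obstruction here, since it has already been resolved by Lemma \ref{DtN-lemma}(ii).
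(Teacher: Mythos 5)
Your proof is correct, and it follows the same overall strategy as the paper --- principal part plus compact perturbation via the splitting $-\mathcal{T}=\mathcal{T}_1+\mathcal{T}_2$ of Lemma \ref{DtN-lemma}(ii), followed by the standard compact-perturbation argument for the index-zero Fredholm property --- but it differs in the one step that actually carries the weight, and there your version is the rigorous one. The paper sets $a_1(u,v)=\int_{B_R}(\mathcal{C}:\nabla u):\nabla\overline{v}\,dx+\int_{\Gamma_R}\mathcal{T}_1u\cdot\overline{v}\,ds$ and asserts that $a_1$ is coercive over $X_R$ directly from the Legendre condition and Lemma \ref{DtN-lemma}(ii). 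Taken literally this is false: since (\ref{LH}) is postulated only for symmetric matrices, the minor symmetries reduce the volume term to the strain energy, which controls only $\|\epsilon(u)\|^2_{L^2(B_R)}$; a rigid motion such as $u(x)=(-x_2,x_1)^\top$ in 2D has $\epsilon(u)=0$, while its trace on $\Gamma_R$ consists of Fourier modes $|n|\le 1$, which the high-mode operator $\mathcal{T}_1$ annihilates, so $a_1(u,u)=0$ with $u\ne 0$. What rescues the theorem is exactly what you supply and the paper omits: the reduction of the complex full-gradient form to the strain energy (using both minor and major symmetries, with the real/imaginary splitting that makes (\ref{LH}) applicable), Korn's second inequality on $B_R$, and the relocation of the resulting $c_0\|u\|^2_{L^2(B_R)}$ remainder into the compact form $q$ --- which the paper's own definition of strong ellipticity permits. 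Your explicit Lax--Milgram factorization $\mathcal{A}=\mathcal{B}(I-\mathcal{B}^{-1}\mathcal{K})$ likewise spells out what the paper asserts in one line. The only points you gloss (as does the paper) are that the Legendre bound must also be invoked for the isotropic tensor on $B_R\setminus\overline{\Omega}$, where it holds with constant $\min(2\mu,N\lambda+2\mu)$, and that the mass-term bound should involve $\max\bigl(\|\real\rho\|_{L^\infty(\Omega)},\rho_0\bigr)$; both are cosmetic.
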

\begin{proof} We may rewrite the form $a$ as the sum $a=a_1+a_2$, where
the sesquilinear forms $a_j$ ($j=1,2$) are defined as
\ben
a_1(u,v)&:=&\int_{B_R}\left\{  \displaystyle\sum_{i,j,k,l=1}^N  C_{ijkl}  \frac{\partial u_k}{\partial x_l} \frac{\partial \overline{v_i}}{\partial x_j}\right\} dx+\int_{\G_R} \mathcal{T}_1u\cdot\overline{v}\,ds,\\
a_2(u,v)&:=& -\omega^2\int_{B_R} u\cdot\overline{v}\,dx+\int_{\G_R} \mathcal{T}_2u\cdot\overline{v}\,ds,
\enn
Note that $\mathcal{T}_j$ ($j=1,2$) are the operators given by Lemma \ref{DtN-lemma}. It is seen from the uniform Legendre ellipticity condition and Lemma \ref{DtN-lemma} (ii) that $a_1$ is coercive over $X_R$. The compact embedding of $X_R$ into $(L^2(B_R))^N$ and the compactness of $\mathcal{T}_2$ give the compactness of the form $a_2$. Hence
$a(\cdot,\cdot)$ is strongly elliptic over $X_R$ and thus $\mathcal{A}$ is a Fredholm operator with index zero.
\end{proof}

{\bf Proof of Theorem \ref{theorem}.} Using Theorem \ref{strong-elliptic} and applying the Fredholm alternative, we only need to prove the uniqueness of our scattering problem. Letting $u^{in}\equiv 0$ (which implies that $f=0$ in $X_R'$) and taking the imaginary part of (\ref{variational}) with $v=u^{sc}$ we get
\ben
\Ima \int_{\G_R} \mathcal{T}u^{sc}\cdot\overline{u^{sc}}\,ds=0.
\enn
By the analogue of Rellich's lemma in elasticity (see Lemmas \ref{lem:Rellich-2D} and \ref{Lem:generalizedDtN-3D} below) we obtain $u^{sc}\equiv0$ in $B_R$. This proves the uniqueness and Theorem \ref{theorem}.
\hfill $\Box$

The remaining part of this section will be devoted to the proof of properties of the DtN map in a more general setting.  We shall consider the \emph{generalized} stress vector (cf. (\ref{stress-2D}))
\be
\label{GStress}
\widetilde{T}_{\widetilde{\lambda},\widetilde{\mu}}u:=\left\{\begin{array}{lll}
(\mu+\widetilde{\mu})\nu\cdot\mbox{grad}\,u + \widetilde{\lambda}\nu\,\mbox{div}\,u - \widetilde{\mu}\nu^\perp\mbox{curl}\,u,\quad &&\mbox{if}\quad N=2,\\
(\mu+\widetilde{\mu})\nu\cdot\mbox{grad}\,u + \widetilde{\lambda}\nu\,\mbox{div}\,u + \widetilde{\mu}\nu\times\mbox{curl}\,u,\quad &&\mbox{if}\quad N=3,
\end{array}\right.
\en
where $\widetilde{\lambda}$, $\widetilde{\mu}\in \R$ satisfying
$\widetilde{\lambda}+\widetilde{\mu}=\lambda+\mu$. In the present paper we suppose that
\be\label{eq:7}
\frac{(\lambda-\mu)(\lambda+2\mu)}{\lambda+3\mu}<\tilde{\lambda}<\lambda+2\mu.
\en
The assumption (\ref{eq:7}) will be used later for proving Lemma \ref{Lem:generalizedDtN} (ii) and Lemma \ref{Lem:generalizedDtN-3D} (ii). We emphasize that the above condition (\ref{eq:7}) covers at least the following three cases:
\begin{description}
\item[Case (i):] $\tilde{\lambda}=\lambda$, $\tilde{\mu}=\mu$.

 \item[Case (ii):] $\tilde{\lambda}=\lambda+\mu$, $\tilde{\mu}=0$.
 \item[Case (iii):] $\tilde{\lambda}=(\lambda+2\mu)(\lambda+\mu)/(\lambda+3\mu)$, $\tilde{\mu}=\mu(\lambda+\mu)/(\lambda+3\mu)$.
\end{description}
 Note that the usual surface traction  coincides with $\widetilde{T}_{\widetilde{\lambda}, \widetilde{\mu}}$ in the case (i). Properties of the DtN map in case (ii) were analyzed in \cite{EH2012} on a line and in \cite{Li2016} on a circle.

  The generalized DtN map $\widetilde{\mathcal{T}}$
corresponding to (\ref{GStress}) is defined as
 \ben
 \widetilde{\mathcal{T}} w=(\widetilde{T}_{\widetilde{\lambda},\widetilde{\mu}}\, v^{sc})|_{\Gamma_R},\qquad w\in \textcolor{rot}{(H^{1/2}(\Gamma_R))^N},
 \enn
 where $v^{sc}\in \textcolor{rot}{(H^1_{loc}(\Omega^c))^N}$ is the radiating solution to the isotropic homogeneous Navier equation (\ref{eq:Naiver}) in $|x|\geq R$.

\subsection{Properties of DtN \textcolor{rot}{map} in 2D}\label{prof-dtn-2d}

In this section we verify Lemma \ref{DtN-lemma} and the Rellich's identity for the generalized DtN map $\widetilde{\mathcal{T}}$ in $\R^2$. For this purpose, the surface vector harmonics in $\R^2$ are needed.
Denote by $(r,\theta_x)$ the
 polar coordinates of $x=(x_1,x_2)\textcolor{rot}{^\top}\in \R^2$, and by $\hat{\boldsymbol{r}}$, $\hat{\boldsymbol{\theta}}\in \mathbb{S}^{1}$ the unit vectors under the polar coordinates \textcolor{rot}{such that}
 \ben
 \hat{\boldsymbol{r}}=(\cos\theta,\sin\theta)\textcolor{rot}{^\top},\qquad \hat{\boldsymbol{\theta}}=(-\sin\theta,\cos\theta)\textcolor{rot}{^\top},\quad \theta\in[0,2\pi).
 \enn
Let $\bb{P}_{n}$ and $\bb{S}_{n}$ be the surface vector harmonics in two-dimensions defined as
\be
\label{vsh1}
\bb{P}_{n}(\hat{\bb{x}}):= e^{in\theta_x}\hat{\boldsymbol{r}},\quad
\bb{S}_{n}(\hat{\bb{x}}):= e^{in\theta_x}\hat{\boldsymbol{\theta}},\quad \hat{\bb{x}}=x/|x|\in \mathbb{S}^1.
\en

Below we shall derive a series representation of the generalized DtN map.
The solution $v^{sc}$ can be split into the sum of a pressure part with vanishing curl and a shear part with vanishing divergence, that is,
\be\label{radiation-solution}
v^{sc}=\mbox{grad}\,\psi_p+\overrightarrow{\mbox{curl}}\,{\psi}_s\quad\mbox{in}\quad |x|\geq R,
\en
where $\psi_p$ and $\psi_s$ are both scalar functions.
It then follows that
\be
\label{pre2}
&&\Delta\psi_\alpha+k_\alpha^2 \psi_\alpha =0,\quad
\lim_{r \to \infty} r^{1/2}\left(\frac{\partial \psi_\alpha}{\partial r}-ik_\alpha\psi_\alpha\right) = 0,\quad \alpha=p,s.
\en
The solutions of (\ref{pre2}) can be expressed as
\be\label{radiation-solution-ps}
\psi_\alpha(x)=\sum_{n=-\infty}^\infty \frac{H_n^{(1)}(k_\alpha r)}{H_n^{(1)}(k_\alpha R)}\psi_\alpha^{n}\; e^{in\theta_x},\quad r=|x|\ge R,\quad \alpha=p,s,
\en
where $\psi_\alpha^{n}\in \C$  stand for the Fourier coefficients of $\psi_\alpha|_{\Gamma_R}$ and $H_n^{(1)}$ is the Hankel function of the first kind of order $n$.
Set
\ben
 t_\alpha:=k_\alpha R,\quad
\gamma_\alpha:=\frac{{H_n^{(1)}}'(t_\alpha)}{H_n^{(1)}(t_\alpha)},\quad
\beta_\alpha:=\frac{{H_n^{(1)}}''(t_\alpha)}{H_n^{(1)}(t_\alpha)},\qquad \alpha=p,s.
\enn
 Let $(\cdot,\cdot)$ be the $L^2$ inner product on the \textcolor{rot}{unit circle} given by
\ben
(u,v):=\frac{1}{2\pi}\int_0^{2\pi} u\cdot\overline{v}\,d\theta\quad \mbox{for all}\quad u,v\in \textcolor{rot}{(L^2(\mathbb{S}^1))^2}.
\enn
Due to the orthogonality relations between $\bb{P}_{n}$ and $\bb{S}_{n}$, it is easy to derive from
(\ref{radiation-solution}) and (\ref{radiation-solution-ps})
 that
\ben
\left(v^{sc}|_{\G_R},\bb{P}_{n}\right) = \frac{1}{R} \left[t_p\gamma_p{\psi}_p^{n}+in\psi_s^{n}\right],\quad
\left(v^{sc}|_{\G_R},\bb{S}_{n}\right) = \frac{1}{R} \left[in\psi_p^{n}-t_s\gamma_s{\psi}_s^{n}\right].
\enn
Equivalently, the previous relations can be written in the matrix form
\be
\label{series-DV}
A_n\begin{bmatrix}
\psi_p^{n} \\
\psi_s^{n}
\end{bmatrix}
=R\begin{bmatrix}
\left(v^{sc}|_{\G_R},\bb{P}_{n}\right) \\
\left(v^{sc}|_{\G_R},\bb{S}_{n}\right)
\end{bmatrix}
,\quad
A_n:=\begin{bmatrix}
t_p\gamma_p & in \\
in & -t_s\gamma_s
\end{bmatrix}.
\en

\begin{lemma}\label{lem:An}
The matrix $A_n$ is  invertible for all $n\in\Z$ and $R>0$. Its inverse is given by
\be
\label{MatrixIA}
A_n^{-1}=\frac{1}{\Lambda_n}\begin{bmatrix}
-t_s\gamma_s & -in \\
-in & t_p\gamma_p
\end{bmatrix},
\qquad
\Lambda_n:=\mbox{det}(A_n)=n^2-t_pt_s\gamma_p\gamma_s.
\en
\end{lemma}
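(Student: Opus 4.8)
The statement splits into the value of the determinant and the formula for the inverse, and only one of these carries real content. Expanding the $2\times2$ determinant directly gives $\det(A_n)=(t_p\gamma_p)(-t_s\gamma_s)-(in)(in)=n^2-t_pt_s\gamma_p\gamma_s=\Lambda_n$, and once we know $\Lambda_n\neq0$ the expression (\ref{MatrixIA}) is simply the adjugate formula $A_n^{-1}=\frac{1}{\det A_n}\begin{bmatrix}d&-b\\-c&a\end{bmatrix}$ with $a=t_p\gamma_p$, $b=c=in$, $d=-t_s\gamma_s$. So the whole task is to prove $\Lambda_n\neq0$ for every $n\in\Z$ and every $R>0$, i.e. $t_pt_s\gamma_p\gamma_s\neq n^2$.

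The plan is to locate each factor $t_\alpha\gamma_\alpha$ in the open upper half-plane and then run an argument (angle) count. Since the background is non-absorbing, $k_p,k_s>0$ and hence $t_\alpha=k_\alpha R>0$ is real. Multiplying numerator and denominator of $\gamma_\alpha={H_n^{(1)}}'(t_\alpha)/H_n^{(1)}(t_\alpha)$ by $\overline{H_n^{(1)}(t_\alpha)}$ and writing $H_n^{(1)}=J_n+iY_n$ with $J_n,Y_n$ real on $(0,\infty)$, the imaginary part of the numerator equals the Wronskian $J_nY_n'-J_n'Y_n=2/(\pi t_\alpha)$, so that
\[
\Ima(t_\alpha\gamma_\alpha)=\frac{2}{\pi\,|H_n^{(1)}(t_\alpha)|^2}>0,\qquad \alpha=p,s.
\]
The same Wronskian shows $H_n^{(1)}$ has no zero on $(0,\infty)$ (otherwise $J_n$ and $Y_n$ would vanish together), so each $\gamma_\alpha$ is well defined.

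With both $t_p\gamma_p$ and $t_s\gamma_s$ in the open upper half-plane, their arguments lie in $(0,\pi)$, so $\arg(t_pt_s\gamma_p\gamma_s)$ equals a sum lying in $(0,2\pi)$, which is never $\equiv0\pmod{2\pi}$; moreover neither factor vanishes. Hence $t_pt_s\gamma_p\gamma_s$ is never a nonnegative real number, whereas $n^2\geq0$ is real. Therefore $\Lambda_n=n^2-t_pt_s\gamma_p\gamma_s\neq0$ for all $n\in\Z$ and all $R>0$, which establishes invertibility, and the inverse is then the adjugate above, namely (\ref{MatrixIA}).

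The main obstacle is the positivity step: recognizing that the relevant combination of Hankel-function logarithmic derivatives has a sign fixed by the Wronskian alone, uniformly in $n$ and $R$. An alternative, special-function-free route is also available: a vanishing $\Lambda_n$ would, through (\ref{series-DV}), produce a nonzero mode $(\psi_p^n,\psi_s^n)$ whose radiating field $v^{sc}=\grad\psi_p+\overrightarrow{\mbox{curl}}\,\psi_s$ has zero Dirichlet trace on $\Gamma_R$; taking divergence and curl of $v^{sc}\equiv0$ (forced by uniqueness of the exterior Dirichlet problem for the Navier equation, the very uniqueness that makes $\mathcal{T}$ well defined) yields $\psi_p=\psi_s\equiv0$, a contradiction. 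I would present the Wronskian computation as the primary argument, since it is self-contained and exhibits the sign explicitly.
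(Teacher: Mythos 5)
Your proof is correct and follows essentially the same route as the paper: both hinge on the Wronskian identity for Bessel and Neumann functions to show $\Ima\bigl(t_\alpha\gamma_\alpha\bigr)=2/\bigl(\pi|H_n^{(1)}(t_\alpha)|^2\bigr)>0$, and then conclude that $t_pt_s\gamma_p\gamma_s$ cannot equal the nonnegative real number $n^2$. The only cosmetic difference is the final step—you argue via the argument of a product of two upper-half-plane numbers, while the paper checks that $\real\Lambda_n$ and $\Ima\Lambda_n$ cannot vanish simultaneously—but these are the same observation in different packaging.
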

\begin{proof}
It's sufficient to prove that $\Lambda_n\ne 0$. We write $\Lambda_n$ as
\ben
\Lambda_n=n^2-I_n(t_P)\;I_n(t_s),\quad
I_n(z):=z{H_n^{(1)}}'(z)/H_n^{(1)}(z).
\enn
Making use of the  Wronskian identity for Bessel and Neumann functions (see, e.g., \cite[Chapter 3.4]{CK98}), it is easy to derive that
\ben
\Ima(I_n(z))=\frac{2}{\pi|H_n^{(1)}(z)|^2}\quad\mbox{for all}\quad n\in \Z,\;z>0.
\enn This implies that, for any fixed $n\in \Z$,
\ben
\real (\Lambda_n)&=&n^2-\real(I_n(t_P))\;\real(I_n(t_s))+\Ima(I_n(t_P))\;\Ima(I_n(t_s)),\\
\Ima (\Lambda_n)&=&-\real(I_n(t_P))\;\Ima(I_n(t_s))-\Ima(I_n(t_P))\;\real(I_n(t_s))
\enn
cannot vanish simultaneously. Hence, $\Lambda_n\ne 0$.
\end{proof}
\begin{remark}\label{Remark-2}
The unique radiating solution $v^{sc}$ to the boundary value problem (\ref{DtN}) can be represented as the series (\ref{radiation-solution}) and (\ref{radiation-solution-ps}), where the coefficients $\psi_p^{n}$ and
    $\psi_s^{n}$ are given by
    \ben
\begin{bmatrix}
\psi_p^{n} \\
\psi_s^{n}
\end{bmatrix}
=R A_n^{-1}\begin{bmatrix}
\left(w,\bb{P}_{n}\right) \\
\left(w,\bb{S}_{n}\right)
\end{bmatrix}.
    \enn
\end{remark}
Now, we turn to investigating
 the generalized stress vector (cf. (\ref{GStress}))
\ben
\widetilde{\mathcal{T}}v^{sc}=(\mu+\widetilde{\mu})\,\hat{\bb{r}}\cdot\mbox{grad}\,v^{sc} + \widetilde{\lambda}\,\hat{\bb{r}}\;\mbox{div}\,v^{sc} - \widetilde{\mu}\,\hat{\bb{\theta}}\;\mbox{curl}\,v^{sc}\qquad \mbox{on}\quad |x|=R.
\enn Inserting (\ref{radiation-solution}) into the previous identity and using the relations
\ben
\mbox{grad}=\hat{\bb{r}}\frac{\partial}{\partial r}+\frac{1}{r} \hat{\bb{\theta}}\frac{\partial}{\partial \theta},\qquad
\hat{\bb{r}}\cdot \mbox{grad}=\frac{\partial}{\partial r},\quad
\overrightarrow{\mbox{curl}}=-\hat{\bb{\theta}}\frac{\partial}{\partial r}+\frac{1}{r} \hat{\bb{r}}\frac{\partial}{\partial \theta},
\enn
 we obtain via straightforward calculations that
\ben
\hat{\bb{r}}\cdot\widetilde{\mathcal{T}}v^{sc} &=&
(\mu+\widetilde{\mu})\, \hat{\bb{r}}\cdot \frac{\partial}{\partial r}\left[
\hat{\bb{r}}\frac{\partial \psi_p}{\partial r}+\frac{1}{r} \hat{\bb{\theta}}\frac{\partial \psi_p}{\partial \theta}
-\hat{\bb{\theta}}\frac{\partial \psi_s}{\partial r}+\frac{1}{r} \hat{\bb{r}}\frac{\partial \psi_s}{\partial \theta}
\right]+\tilde{\lambda}\,\mbox{div}\,\mbox{curl}\psi_p\\
&=&(\mu+\widetilde{\mu}) \left(\frac{\pa^2 \psi_p}{\pa r^2}-\frac{1}{r^2}\frac{\pa \psi_s}{\pa \theta}+\frac{1}{r}\frac{\pa^2 \psi_s}{\pa r\pa \theta}\right) +\widetilde{\lambda}\Delta\psi_p,\\
\hat{\bb{\theta}}\cdot\widetilde{\mathcal{T}}v^{sc}
&=&(\mu+\widetilde{\mu})\, \hat{\bb{\theta}}\cdot \frac{\partial}{\partial r}\left[
\hat{\bb{r}}\frac{\partial \psi_p}{\partial r}+\frac{1}{r} \hat{\bb{\theta}}\frac{\partial \psi_p}{\partial \theta}
-\hat{\bb{\theta}}\frac{\partial \psi_s}{\partial r}+\frac{1}{r} \hat{\bb{r}}\frac{\partial \psi_s}{\partial \theta}
\right]-\tilde{\mu}\,\mbox{curl}\,\overrightarrow{\mbox{curl}} \psi_s\\
&=& (\mu+\widetilde{\mu}) \left(-\frac{1}{r^2}\frac{\pa \psi_p}{\pa \theta}+\frac{1}{r}\frac{\pa^2 \psi_p}{\pa r\pa \theta}-\frac{\pa^2 \psi_s}{\pa r^2}\right) +\widetilde{\mu}\Delta\psi_s.
\enn
This implies that
\be\label{DtN-matix}
\begin{bmatrix}
\left(\widetilde{\mathcal{T}}v^{sc}|_{\G_R},\bb{P}_{n}\right) \\
\left(\widetilde{\mathcal{T}}v^{sc}|_{\G_R},\bb{S}_{n}\right)
\end{bmatrix}
=\frac{1}{R^2}\,B_n \begin{bmatrix}
\psi_p^n \\ \psi_s^n
\end{bmatrix}
\en
where
\be
\label{MatrixB}
B_n:=\begin{bmatrix}
(\mu+\widetilde{\mu})t_p^2\beta_p -\widetilde{\lambda}t_p^2 & i(\mu+\widetilde{\mu})n\left( t_s\gamma_s-1\right) \\
i(\mu+\widetilde{\mu})n\left(t_p\gamma_p-1\right) & -\left(\mu+\widetilde{\mu}\right)t_s^2\beta_s -\widetilde{\mu}t_s^2
\end{bmatrix}.
\en
Combining (\ref{DtN-matix}) with (\ref{series-DV}) gives the relation 
\be
\label{TractionR}
\begin{bmatrix}
\left(\widetilde{\mathcal{T}}(v^{sc}|_{\G_R}),\bb{P}_{n}\right) \\
\left(\widetilde{\mathcal{T}}(v^{sc}|_{\G_R}),\bb{S}_{n}\right)
\end{bmatrix}
=W_n\begin{bmatrix}
\left(v^{sc}|_{\G_R},\bb{P}_{n}\right) \\
\left(v^{sc}|_{\G_R},\bb{S}_{n}\right)
\end{bmatrix},\qquad
W_n:=\frac{1}{R}B_nA_n^{-1}.
\en

Properties of the two-dimensional DtN \textcolor{rot}{map} are summarized in the subsequent two lemmas.

\begin{lemma}\label{Lem:generalizedDtN} Let $w=\sum_{n\in \Z} w_p^n \bb{P}_n+w_s^n \bb{S}_n\in \textcolor{rot}{(H^{1/2}(\Gamma_R))^2}$. Then,
 \begin{description}
\item[(i)]The generalized DtN operator $\widetilde{\mathcal{T}}$ takes the form
\ben
\widetilde{\mathcal{T}} w=\sum_{n\in\Z} W_n \begin{bmatrix}w_p^n\\ w_s^n \end{bmatrix}
\enn in the orthogonal basis $\{(\bb{P}_n, \bb{S}_n): n\in \Z\}$. Moreover, $\widetilde{\mathcal{T}}$ is a bounded linear \textcolor{rot}{operator} from $\textcolor{rot}{(H^{s}(\Gamma_R))^2}$  to $\textcolor{rot}{(H^{s-1}(\Gamma_R))^2}$ for all $s\in \R$.
\item[(ii)] For sufficiently large $M>0$, the real part of the operator
\ben
-\widetilde{\mathcal{T}_1} w:=-\sum_{|n|\geq M} W_n \begin{bmatrix}w_p^n\\ w_s^n \end{bmatrix}
\enn is positive over $\textcolor{rot}{(H^{1/2}(\Gamma_R))^2}$, and $\widetilde{\mathcal{T}}-\widetilde{\mathcal{T}_1}$ is a compact operator.
\end{description}
\end{lemma}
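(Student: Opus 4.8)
The plan is to read off both assertions from the block-diagonalisation (\ref{TractionR}), which shows that in the complete orthogonal system $\{(\bb{P}_n,\bb{S}_n)\}_{n\in\Z}$ of $(L^2(\s^1))^2$ the operator $\widetilde{\mathcal{T}}$ acts on the $n$-th Fourier block by the matrix $W_n=\frac1R B_nA_n^{-1}$. Part (i)'s series form is then immediate from (\ref{TractionR}), extended by density from the harmonics to all of $(H^{1/2}(\G_R))^2$. For the mapping property I would use the Fourier characterisation $\|w\|_{(H^s(\G_R))^2}^2\asymp\sum_n(1+n^2)^s(|w_p^n|^2+|w_s^n|^2)$, which reduces the statement to the single uniform block bound $\|W_n\|\le C(1+|n|)$; this yields boundedness $(H^s)^2\to(H^{s-1})^2$ for all $s\in\R$ at once.

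Everything then hinges on the large-order behaviour of the Hankel ratios. I would first establish, for fixed $z>0$, the expansion $I_n(z):=zH_n^{(1)\prime}(z)/H_n^{(1)}(z)=-|n|+\tfrac{z^2}{2|n|}+O(|n|^{-3})$, with $\Ima I_n(z)=2/(\pi|H_n^{(1)}(z)|^2)$ exponentially small (the Wronskian identity already used in Lemma~\ref{lem:An}), together with $t_\alpha^2\beta_\alpha=n^2-t_\alpha^2-I_n(t_\alpha)$ from Bessel's equation. Substituting these into (\ref{MatrixB})--(\ref{MatrixIA}) reveals that the dominant parts of both $B_n$ and the adjugate of $A_n$ are proportional to the \emph{nilpotent} matrix $J=\left[\begin{smallmatrix}1&-i\\-i&-1\end{smallmatrix}\right]$ (with $\overline J$ when $n<0$), $J^2=0$, while $\La_n=\det A_n\to(t_p^2+t_s^2)/2>0$, so $A_n^{-1}$ stays uniformly bounded for large $|n|$ (sharpening Lemma~\ref{lem:An}). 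This degeneracy is exactly the main obstacle: the apparent $O(|n|^3)$ and $O(|n|^2)$ contributions to $B_nA_n^{-1}$ cancel because $J^2=0$, so the true size of $W_n$ only surfaces after carrying the expansions of $\gamma_\alpha,\beta_\alpha$ one order further and bookkeeping the cancellations; done carefully this gives $\|W_n\|=O(|n|)$ and completes (i).

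For (ii) the same expansion produces the surviving leading block. Since $\Ima I_n(t_\alpha)$ is negligible, $W_n$ is Hermitian to leading order, and one finds $\real(-W_n)=\frac{|n|}{R\La_\infty}\left[\begin{smallmatrix}\kappa&-i\delta\\ i\delta&\kappa\end{smallmatrix}\right]+O(1)$, where $\La_\infty=\tfrac{t_p^2+t_s^2}{2}$, $\kappa=\omega^2\rho_0R^2=(\lambda+2\mu)t_p^2=\mu t_s^2>0$ and $\delta=\kappa-(\mu+\widetilde\mu)\La_\infty$. The two eigenvalues are $\frac{|n|}{R\La_\infty}(\kappa\pm|\delta|)+O(1)$, so strict positive-definiteness for all large $|n|$ is equivalent to $\kappa>|\delta|$, i.e. $0<(\mu+\widetilde\mu)\La_\infty<2\kappa$. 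Inserting $t_p^2=\kappa/(\lambda+2\mu)$, $t_s^2=\kappa/\mu$ and $\mu+\widetilde\mu=\lambda+2\mu-\widetilde\lambda$, this two-sided inequality collapses to exactly the hypothesis (\ref{eq:7}); this is where condition (\ref{eq:7}) is used and is the second delicate point. I would then fix $M$ so large that the $O(1)$ corrections cannot overturn the strict definiteness, obtaining $\real(-W_n)\ge c\,|n|\,\mathrm{Id}$ for $|n|\ge M$ and hence positivity of $\real(-\widetilde{\mathcal{T}_1})$ on $(H^{1/2}(\G_R))^2$; finally $\widetilde{\mathcal{T}}-\widetilde{\mathcal{T}_1}=\sum_{|n|<M}W_n(\cdot)$ is finite-rank, thus compact.
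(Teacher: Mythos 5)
Your proposal is correct, and its overall strategy is the same as the paper's: diagonalize $\widetilde{\mathcal{T}}$ over the Fourier blocks via $W_n=\frac{1}{R}B_nA_n^{-1}$, prove the uniform block bound $\|W_n\|\le C(1+|n|)$ for (i), show the Hermitian part $\widetilde{W}_n=-(W_n+W_n^*)/2$ is positive definite for large $|n|$ under (\ref{eq:7}) for (ii), and get compactness of the low-mode remainder from finite rank. Where you genuinely diverge is in how the dangerous $O(|n|^3)$ and $O(|n|^2)$ terms in $B_n\,\mathrm{adj}(A_n)$ are killed. You do it asymptotically, observing that the leading parts of $B_n$ and $\mathrm{adj}(A_n)$ are multiples of the nilpotent matrix $J$ and then carrying the Hankel-ratio expansions one order further; this works, and it isolates the structural reason for the cancellation, but it forces exactly the second-order bookkeeping you identify as the main obstacle. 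The paper instead performs the cancellation \emph{exactly}: using $t_\alpha^2\beta_\alpha=n^2-t_\alpha^2-t_\alpha\gamma_\alpha$ (Bessel's equation) together with $\widetilde{\lambda}+\widetilde{\mu}=\lambda+\mu$, it collapses each entry to a closed form, e.g.
\[
W_n^{(1,1)}=\frac{1}{R\Lambda_n}\left[-(\mu+\widetilde{\mu})\Lambda_n+\omega^2\rho_0R^2\,t_s\gamma_s\right],\qquad
W_n^{(1,2)}=\frac{in}{R\Lambda_n}\left[\omega^2\rho_0R^2-(\mu+\widetilde{\mu})\Lambda_n\right],
\]
so the asymptotics $t_\alpha\gamma_\alpha=-|n|+t_\alpha^2/(2|n|)+O(n^{-2})$ and $\Lambda_n\to(t_p^2+t_s^2)/2$ enter only at the very last step, with no higher-order error tracking needed. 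For (ii), the paper checks positivity by Sylvester's criterion (the $(1,1)$-entry and $\det\widetilde{W}_n$) where you compute the eigenvalues $\kappa\pm|\delta|$; these are equivalent, and your reduction of $\kappa>|\delta|$ to (\ref{eq:7}) is exactly the paper's condition $4\mu^2(\lambda+2\mu)^2-[(\lambda-\widetilde{\lambda})(\lambda+3\mu)+2\mu^2]^2>0$. One slip to correct: $A_n^{-1}$ is \emph{not} uniformly bounded --- its entries grow like $|n|$, since $\mathrm{adj}(A_n)\sim|n|J$ while $\Lambda_n\to\Lambda_\infty$; what your argument actually uses, and what is true, is that $\Lambda_n=\det A_n$ stays bounded away from zero. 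As written, that remark also contradicts your own next sentence, which correctly treats $B_nA_n^{-1}$ as having apparent $O(|n|^3)$ contributions.
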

\begin{proof}
(i) We only need to show the boundedness of $\widetilde{\mathcal{T}}$. Recall that
\ben
||w||_{\textcolor{rot}{(H^{s}(\Gamma_R))^2}}&=&\left(\sum_{n\in \Z}(1+|n|)^{2s} |w^n|^{2}\right)^{1/2},\quad
w^n:=[w_p^n, w_s^n]^\top,\\
||\widetilde{\mathcal{T}}w||_{\textcolor{rot}{(H^{s-1}(\Gamma_R))^2}}&=&\left(\sum_{n\in \Z}(1+|n|)^{2(s-1)} |W_nw^n|^{2}\right)^{1/2}.
\enn
Hence, it suffices to estimate the max norm of the matrix $W_n$ bounded by
\be\label{maxnorm}
||W_n||_{\mbox{max}}\leq C\,|n|,
\en
for some constant $C>0$ uniformly in all $n\in \Z$, so that $|W_n w^n|^2\leq C^2\,|n|^2 |w^n|^2$.

It holds that
\ben
{H_n^{(1)}}''(z) &=& \left(H_{n-1}^{(1)}(z)-\frac{n}{z} H_n^{(1)}(z)\right)'\\
&=& -H_n^{(1)}(z)+\frac{n-1}{z}H_{n-1}^{(1)}(z) +\frac{n}{z^2}H_n^{(1)}(z)-\frac{n}{z}\left(H_{n-1}^{(1)}(z) -\frac{n}{z} H_n^{(1)}(z)\right)\\
&=& \frac{n^2+n-z^2}{z^2}H_n^{(1)}(z)-\frac{1}{z} H_{n-1}^{(1)}(z)\\
&=& \frac{n^2+n-z^2}{z^2}H_n^{(1)}(z)-\frac{1}{z} \left({H_n^{(1)}}'(z)+\frac{n}{z} H_n^{(1)}(z)\right)\\
&=& \left(\frac{n^2}{z^2}-1\right)H_n^{(1)}(z)- \frac{1}{z} {H_n^{(1)}}'(z),
\enn
giving rise to the identities
\be\label{eq:4}
\beta_p=\frac{n^2}{t_p^2}-1-\frac{1}{t_p}\gamma_p, \quad \beta_s=\frac{n^2}{t_s^2}-1-\frac{1}{t_s}\gamma_s.
\en
From the expressions of $A_n^{-1}$ and $B_n$ we get the entries $W_n^{(i,j)}$ of $W_n$, given by
\ben
W_n^{(1,1)} &=& \frac{1}{R\Lambda_n}\left\{-t_s\gamma_s\left[ (\mu+\widetilde{\mu})t_p^2\beta_p -\widetilde{\lambda}t_p^2\right] +n^2(\mu+\widetilde{\mu})(t_s\gamma_s-1)\right\}\\
&=& \frac{1}{R\Lambda_n}\left[ -(\mu+\widetilde{\mu})\Lambda_n +\omega^2\rho_0R^2t_s\gamma_s\right], \\
W_n^{(2,2)} &=& \frac{1}{R\Lambda_n}\left\{ n^2(\mu+\widetilde{\mu})(t_p\gamma_p-1) -t_p\gamma_p\left[ (\mu+\widetilde{\mu})t_s^2\beta_s +\widetilde{\mu}t_s^2\right]\right\}\\
&=& \frac{1}{R\Lambda_n}\left[-(\mu+\widetilde{\mu})\,\Lambda_n +\omega^2\rho_0R^2t_p\gamma_p\right],\\
W_n^{(1,2)} &=& \frac{1}{R\Lambda_n}\left\{-in\left[ (\mu+\widetilde{\mu})t_p^2\beta_p -\widetilde{\lambda}t_p^2\right] +int_p\gamma_p(\mu+\widetilde{\mu})(t_s\gamma_s-1)\right\} \\
&=& \frac{1}{R\Lambda_n}\left[ -in(\mu+\widetilde{\mu})\,\Lambda_n +in\omega^2\rho_0R^2\right],\\
W_n^{(2,1)} &=& \frac{1}{R\Lambda_n}\left\{ -in(\mu+\widetilde{\mu})t_s\gamma_s(t_p\gamma_p-1) +in\left[ (\mu+\widetilde{\mu})t_s^2\beta_s +\widetilde{\mu}t_s^2\right]\right\} \\
&=& \frac{1}{R\Lambda_n}\left[ in(\mu+\widetilde{\mu})\,\Lambda_n -in\omega^2\rho_0R^2\right],
\enn
in which we have used (\ref{eq:4}) and the fact that
$
\widetilde{\lambda}+\widetilde{\mu}=\lambda+\mu.
$

From the series expansions of the Bessel and Neumann functions (see, e.g., \cite[Chapter 3]{CK98}) we know
\ben
H_n^{(1)}(z)=\frac{(n-2)!}{i\pi}\left(\frac{2}{z}\right)^{n-1} \left[(n-1)\left(\frac{2}{z}\right)^2+1+O\left(\frac{1}{n}\right)\right],\quad n\rightarrow +\infty.
\enn
This implies that
\be\no
\frac{H_{n-1}^{(1)}(z)}{H_n^{(1)}(z)} &=& \frac{z}{2n-4} \frac{1+(n-2)\left(\frac{2}{z}\right)^2+O\left(\frac{1}{n}\right)} {1+(n-1)\left(\frac{2}{z}\right)^2+O\left(\frac{1}{n}\right)}\\ \no
&=& \left[\frac{z}{2n}+O\left(\frac{1}{n^2}\right)\right] \left[1+O\left(\frac{1}{n}\right)\right] \\ \label{eq:1}
&=& \frac{z}{2n} +O\left(\frac{1}{n^2}\right).
\en
The asymptotic behavior (\ref{eq:1}) together with the relation ${H_n^{(1)}}'=-n/z H_n^{(1)}+H_{n-1}^{(1)}$
leads to
\ben
\frac{{H_n^{(1)}}'(z)}{H_n^{(1)}(z)}=  -\frac{n}{z}+\frac{z}{2n}+O\left(\frac{1}{n^2}\right),\quad n\rightarrow +\infty.
\enn
Since $H_{-n}^{(1)}(z)=(-1)^nH_n^{(1)}(z)$, we obtain as $|n|\rightarrow\infty$ that
\be\label{eq:2}
&&\gamma_\alpha=\frac{{H_n^{(1)}}'(t_\alpha)}{H_n^{(1)}(t_\alpha)}= -\frac{|n|}{t_\alpha}+\frac{ t_\alpha }{2|n|}+O\left(\frac{1}{n^2}\right),\quad \alpha=p,s,
\\ \label{eq:3}
&&\Lambda_n =\frac{R^2(k_p^2+k_s^2)}{2}+O(\frac{1}{|n|})=\frac{R^2\rho_0\omega^2(\lambda+3\mu)}{2\mu(\lambda+2\mu)} +O\left(\frac{1}{|n|}\right).
\en
Inserting (\ref{eq:2}) and (\ref{eq:3}) into the expression of $W_n^{(i,j)}$ yields

\ben
W_n^{(1,1)} &=& -\frac{2\mu(\lambda+2\mu)} {R(\lambda+3\mu)}|n|+O(1),\\
W_n^{(2,2)} &=& -\frac{2\mu(\lambda+2\mu)} {R(\lambda+3\mu)}|n|+O(1),\\
W_n^{(1,2)} &=& \frac{i\left[(\mu+\widetilde{\mu}) (\lambda+3\mu)-2\mu(\lambda+2\mu) \right]}{R(\lambda+3\mu)}|n| +O(1),\\
W_n^{(2,1)} &=& -\frac{i\left[(\mu+\widetilde{\mu}) (\lambda+3\mu)-2\mu(\lambda+2\mu) \right]}{R(\lambda+3\mu)}|n| +O(1),
\enn
from which the
estimate (\ref{maxnorm}) follows directly.

(ii) Define $\widetilde{W}_n:=-(W_n+W_n^*)/2$, where $(\cdot)^*$ means the conjugate transpose of a matrix. \textcolor{rot}{For sufficiently large $|n|$, we have}
\ben
&&\widetilde{W}_n^{(1,1)}=\frac{2\mu(\lambda+2\mu)} {R(\lambda+3\mu)}|n|+O(1) >0,\\
&&\mbox{det}\,(\widetilde{W}_n) = \frac{4\mu^2(\lambda+2\mu)^2-[(\lambda-\widetilde{\lambda}) (\lambda+3\mu)+2\mu^2]^2}{R^2(\lambda+3\mu)^2}n^2+O(n).
\enn
Under the assumption (\ref{eq:7}) on $\widetilde{\lambda}$, we see
\ben
4\mu^2(\lambda+2\mu)^2-[(\lambda-\widetilde{\lambda}) (\lambda+3\mu)+2\mu^2]^2>0.
\enn
implying that $\mbox{det}\,(\widetilde{W}_n)>0$ for sufficiently large $|n|$. Hence, there exists $M>0$ such that $\widetilde{W}_n$ is positive definite over $\C^2$ for all $|n|\geq M$. This proves the positivity of the operator $-\real\widetilde{\mathcal{T}_1}$  defined in Lemma
\ref{Lem:generalizedDtN}. Finally, $\widetilde{\mathcal{T}}-\widetilde{\mathcal{T}_1}$ is compact since it is a finite dimensional operator over $\textcolor{rot}{(H^{1/2}(\Gamma_R))^2}$.
\end{proof}
Below we verify the analogue of Rellich's lemma in plane elasticity. It was used in the uniqueness proof of Theorem \ref{theorem}.
\begin{lemma}\label{lem:Rellich-2D}
Let $u^{sc}$ be a radiating solution to the Navier equation (\ref{eq:Naiver}) in $|x|\geq R$. Suppose that
\ben
\Ima\,\left(\int_{\Gamma_R} \widetilde{\mathcal{T}}(u^{sc}|_{\Gamma_R})\cdot \overline{u^{sc}}\,ds\right)=0.
\enn Then $u^{sc}\equiv 0$ in $|x|\geq R$.
\end{lemma}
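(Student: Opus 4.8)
The plan is to diagonalize the boundary sesquilinear form in the vector harmonics $\{\bb{P}_n,\bb{S}_n\}$ and reduce the statement to the strict positivity of the Hermitian imaginary part of each block $W_n$. This is the elastic analogue of the energy-flux argument underlying Rellich's lemma for the scalar Helmholtz equation.

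First I would expand the Dirichlet trace as $u^{sc}|_{\Gamma_R}=\sum_{n\in\Z}\big(w_p^n\bb{P}_n+w_s^n\bb{S}_n\big)$ and set $w^n:=[w_p^n,w_s^n]^\top$. By Lemma \ref{Lem:generalizedDtN}(i), the orthonormality of $\{\bb{P}_n,\bb{S}_n\}$ under $(\cdot,\cdot)$, and $ds=R\,d\theta$ on $\Gamma_R$, the boundary integral decouples into a sum over Fourier modes,
\[
\int_{\Gamma_R}\widetilde{\mathcal{T}}(u^{sc}|_{\Gamma_R})\cdot\overline{u^{sc}}\,ds=2\pi R\sum_{n\in\Z}(w^n)^{*}W_nw^n ,
\]
the rearrangement being justified by the mapping properties in Lemma \ref{Lem:generalizedDtN}. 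Taking imaginary parts, the hypothesis becomes $\sum_{n\in\Z}(w^n)^{*}W_n^{I}w^n=0$, where $W_n^{I}:=(W_n-W_n^{*})/(2i)$ is Hermitian. If I can show each $W_n^{I}$ is positive definite, then every summand is nonnegative and the vanishing of the sum forces $w^n=0$ for all $n$, whence $u^{sc}|_{\Gamma_R}=0$; the unique solvability of the exterior Dirichlet problem (\ref{DtN}) then yields $u^{sc}\equiv0$ in $|x|\ge R$.

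To identify $W_n^{I}$ I would use the explicit entries of $W_n$ computed in the proof of Lemma \ref{Lem:generalizedDtN}(i). Writing $W_n=C_n+G_n$, the matrix $C_n=-\tfrac{\mu+\widetilde{\mu}}{R}\big[\begin{smallmatrix}1&in\\ -in&1\end{smallmatrix}\big]$ is Hermitian and carries all the dependence on $\widetilde{\lambda},\widetilde{\mu}$, so it drops out of $W_n^{I}$; only $G_n=\tfrac{\omega^2\rho_0R}{\Lambda_n}\big[\begin{smallmatrix}I_n(t_s)&in\\ -in&I_n(t_p)\end{smallmatrix}\big]$ survives, where $I_n(z):=z\,{H_n^{(1)}}'(z)/H_n^{(1)}(z)$ and $\Lambda_n=n^2-I_n(t_p)I_n(t_s)$ as in Lemma \ref{lem:An}. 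A direct evaluation then gives
\[
(w^n)^{*}W_n^{I}w^n=\frac{\omega^2\rho_0R}{|\Lambda_n|^2}\,\Ima\big(Q_n\,\overline{\Lambda_n}\big),\qquad Q_n:=I_n(t_s)|w_p^n|^2+I_n(t_p)|w_s^n|^2-2n\,\Ima(\overline{w_p^n}w_s^n).
\]
In particular the imaginary part is independent of the choice of $(\widetilde{\lambda},\widetilde{\mu})$, so the whole family (\ref{eq:7}) is treated at once.

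The decisive input is the Wronskian identity established in the proof of Lemma \ref{lem:An}, namely $\Ima I_n(t_\alpha)=2/(\pi|H_n^{(1)}(t_\alpha)|^2)>0$ for the real arguments $t_\alpha=k_\alpha R>0$ ($\alpha=p,s$); here it is essential that $\rho_0>0$ is real so that $k_p,k_s$ are real. Splitting $I_n(t_\alpha)$ into real and imaginary parts in $\Ima(Q_n\overline{\Lambda_n})$ and completing squares, using $|\Ima(\overline{w_p^n}w_s^n)|\le|w_p^n||w_s^n|$ together with the arithmetic--geometric mean inequality to dominate the cross term $-2n\,\Ima(\overline{w_p^n}w_s^n)$, I expect the lower bound
\[
\Ima\big(Q_n\,\overline{\Lambda_n}\big)\ \ge\ \Ima I_n(t_s)\,\Ima I_n(t_p)\,\big(\Ima I_n(t_s)\,|w_p^n|^2+\Ima I_n(t_p)\,|w_s^n|^2\big),
\]
which is strictly positive unless $w^n=0$. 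This completion-of-squares step, arranged so that the cross term is absorbed uniformly in $n\in\Z$ for both wavenumbers simultaneously, is the main obstacle; once it is in place, positive definiteness of every $W_n^{I}$ follows and the argument closes as described above.
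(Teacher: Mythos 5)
Your proposal is correct, and the step you flag as ``the main obstacle'' does close; but your route differs from the paper's in a substantive way, precisely at that step. The paper never works in the Dirichlet-trace variables $w^n$: it expands $u^{sc}$ via the potentials (\ref{radiation-solution})--(\ref{radiation-solution-ps}), writes the boundary integral as $R^{-3}\sum_n (A_n^*B_n\Psi_n,\Psi_n)$ in the potential coefficients $\Psi_n=(\psi_p^n,\psi_s^n)^\top$ (see (\ref{eq:5})--(\ref{eq:6})), and then observes that the anti-Hermitian part of $A_n^*B_n$ is \emph{exactly diagonal} (the off-diagonal entries satisfy $a_{12}=\overline{a_{21}}$), with diagonal imaginary parts $2\omega^2R^2/\bigl(\pi|H_n^{(1)}(t_\alpha)|^2\bigr)>0$ by the Wronskian identity; positivity is then immediate, with no completion of squares. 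Your variables are related to the paper's by the congruence $\Psi_n=RA_n^{-1}w^n$, i.e.\ $(w^n)^*W_nw^n=R^{-3}\,\Psi_n^*A_n^*B_n\Psi_n$, and in the trace variables $W_n^I$ is a full $2\times 2$ Hermitian matrix, so the extra algebra is genuinely needed --- and it works. Writing $I_\alpha=I_n(t_\alpha)=a_\alpha+ib_\alpha$ with $b_\alpha=\Ima I_n(t_\alpha)>0$ and $m=\Ima(\overline{w_p^n}w_s^n)$, one finds
\ben
\Ima\bigl(Q_n\overline{\Lambda_n}\bigr)
=\bigl(b_p|I_s|^2+n^2b_s\bigr)|w_p^n|^2
+\bigl(b_s|I_p|^2+n^2b_p\bigr)|w_s^n|^2
-2nm\,(a_pb_s+a_sb_p),
\enn
and after subtracting your claimed lower bound $b_pb_s\bigl(b_s|w_p^n|^2+b_p|w_s^n|^2\bigr)$ the remaining quadratic form in $(|w_p^n|,|w_s^n|)$ has diagonal entries $b_pa_s^2+n^2b_s\geq 0$, $b_sa_p^2+n^2b_p\geq 0$ and determinant $b_pb_s\,(a_pa_s-n^2)^2\geq 0$, hence is positive semidefinite, uniformly in $n\in\Z$; so your bound holds and every $W_n^I$ is positive definite. (If you only need positive definiteness of $W_n^I$ rather than the explicit bound, skip the subtraction: the same computation gives determinant $b_pb_s|\Lambda_n|^2>0$, with $\Lambda_n\neq 0$ supplied by Lemma \ref{lem:An}, which is the cleaner statement.) Your concluding step --- $u^{sc}|_{\Gamma_R}=0$ plus unique solvability of the exterior Dirichlet problem (\ref{DtN}) --- is also legitimate and not circular, since that uniqueness is established independently (Remark \ref{Remark-2}, via invertibility of $A_n$); alternatively you can read off $\Psi_n=RA_n^{-1}w^n=0$ and finish exactly as the paper does. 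In short: the paper's choice of variables makes the positivity trivial at the cost of carrying the potentials through the proof, while your choice keeps the DtN matrix $W_n$ of Lemma \ref{Lem:generalizedDtN} center stage and yields the standalone statement that the flux form $\Ima\int_{\Gamma_R}\widetilde{\mathcal{T}}w\cdot\overline{w}\,ds$ is positive definite mode by mode, at the cost of a $2\times2$ determinant computation.
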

\begin{proof}
Assume that $u^{sc}$ can be decomposed into the form of (\ref{radiation-solution}) and (\ref{radiation-solution-ps}) with the coefficients $\Psi_n=(\psi^n_p, \psi^n_s)^\top\in \C^2$. It follows from (\ref{series-DV}) and (\ref{DtN-matix}) that
\be\label{eq:5}
\int_{\Gamma_R} \widetilde{\mathcal{T}}(u^{sc}|_{\Gamma_R})\cdot \overline{u^{sc}}\,ds
=\sum_{n\in \Z}\left(R^{-2}B_n \Psi_n, R^{-1}A_n \Psi_n\right)
=R^{-3}\sum_{n\in \Z}\left(A_n^*B_n \Psi_n,\Psi_n\right).
\en
Using again the relations in (\ref{eq:4}), straightforward calculations show that
\be\label{eq:6}
A_n^*B_n=\begin{bmatrix}
t_p\overline{\gamma_p} & -in \\
-in & -t_s\overline{\gamma_s}
\end{bmatrix} \begin{bmatrix}
(\mu+\widetilde{\mu})t_p^2\beta_p -\widetilde{\lambda}t_p^2 & i(\mu+\widetilde{\mu})n\left( t_s\gamma_s-1\right) \\
i(\mu+\widetilde{\mu})n\left(t_p\gamma_p-1\right) & -\left(\mu+\widetilde{\mu}\right)t_s^2\beta_s -\widetilde{\mu}t_s^2
\end{bmatrix}=:\begin{bmatrix}
a_{11} & a_{12}\\
a_{21} & a_{22}
\end{bmatrix}.
\en
Recalling $
\widetilde{\lambda}+\widetilde{\mu}=\lambda+\mu
$ and making use of the relations
\ben
t_\alpha^2\,\beta_\alpha=n^2-t_\alpha^2-t_\alpha\gamma_\alpha, \quad
\Ima (\overline{\gamma}_\alpha)=-\frac{2}{|H_n^{(1)}(t_\alpha)|^2\pi t_\alpha}<0,\quad \alpha=p,s,
\enn we obtain
\ben
&&\Ima(a_{11})=-\Ima(\overline{\gamma}_p)(\lambda+2\mu)t_p^3=\frac{2\omega^2R^2}{\pi |H_n^{(1)}(k_pR)|^2 }>0,\\
&&\Ima(a_{22})=-\Ima(\overline{\gamma}_s)\mu t_s^3=\frac{2\omega^2R^2}{\pi |H_n^{(1)}(k_sR)|^2 }>0,\\
&&a_{12}=\overline{a}_{21}.
\enn
This implies that
\ben
\Ima (A_n^*B_n)=\frac{(A_n^*B_n)-(A_n^*B_n)^*}{2i}= \frac{2\omega^2R^2}{\pi}\begin{bmatrix}
1/|H_n^{(1)}(k_pR)|^2 & 0\\
0 & 1/|H_n^{(1)}(k_sR)|^2
\end{bmatrix}.
\enn
Now, we conclude from (\ref{eq:5}) and (\ref{eq:6}) that
\ben
0=\frac{2\omega^2}{\pi R}\sum_{n\in \Z}\left( \left|\frac{\psi^n_p}{H_n^{(1)}(k_pR)}\right|^2+
\left|\frac{\psi^n_s}{H_n^{(1)}(k_sR)}\right|^2  \right)
\enn
implying that $\psi^n_s=\psi^n_p=0$ for all $n\in \Z$. Therefore, $u^{sc}\equiv 0$ in $|x|\geq R$.
\end{proof}

\subsection{Properties of DtN \textcolor{rot}{map} in 3D}\label{prof-dtn-3d}
The aim of this section is to derive properties of the generalized DtN \textcolor{rot}{map} in 3D, following the lines in the previous section. Denote by $(r, \theta, \phi)$ the spherical coordinates of $x=(x_1, x_2, x_3)\textcolor{rot}{^\top}\in \R^3$.
The coordinate $\theta\in[0,\pi]$ corresponds to the angle from the $z$-axis,  whereas $\phi\in[0,2\pi)$ corresponds to the polar angle in the $(x,y)$-plane.
Let
   \ben
   \hat{\boldsymbol{r}}&=&(\cos\theta\sin\phi, \sin\theta\sin\phi, \cos\phi)\textcolor{rot}{^\top},\\ \hat{\boldsymbol{\theta}}&=&(-\sin\theta,\cos\theta,0)\textcolor{rot}{^\top}, \\
    \hat{\boldsymbol{\phi}}&=&(\cos\theta\cos\phi,\sin\theta\cos\phi,-\sin\phi) \textcolor{rot}{^\top}
   \enn
   be the unit vectors in the spherical coordinates.
In 3D, we need the $nm$-th spherical harmonic functions
\ben
Y_{nm}(\hat{\bb{x}}):=Y_{nm}(\theta,\phi)=\sqrt{\frac{(2n+1)(n-|m|)!} {4\pi(n+|m|)!}} P_n^{|m|}(\cos\theta)e^{im\phi},\quad \hat{\bb{x}}:=x/|x|\in \mathbb{S}^2
\enn for all $n\in \N$ and $m=-n,\cdots,n$,
where $P_n^m$ is the $m$-th associated Lagendre function of order $n$. Let $u_{nm}$ and $\bb{V}_{nm}$ be the vector spherical harmonics defined as
\be
\label{vsh2}
u_{nm}(\hat{\bb{x}}):= \frac{\nabla_{\mathbb{S}^2}Y_{nm}(\hat{\bb{x}})} {\sqrt{\delta_n}},\quad
\bb{V}_{nm}(\hat{\bb{x}}):= \hat{\bb{x}}\times u_{nm}(\hat{\bb{x}}),
\en
where $\delta_n:=n(n+1)$ and $\nabla_{\mathbb{S}^2}$ denotes the surface gradient on $\mathbb{S}^2$. They form a complete orthonormal basis in the $L^2$-tangent space of the unit sphere
\be
\label{tl2}
L_T^2(\mathbb{S}^2):=\left\{{\varphi}\in \textcolor{rot}{(L^2(\mathbb{S}^2))^3}: \hat{\bb{x}}\cdot{\varphi}(\hat{\bb{x}})=0\right\},
\en
 and satisfy the following  equations for any $f(r)\in C^1(\R^+)$:
\be
\label{relation1}
\mbox{curl}\,(f(r)\bb{V}_{nm}) &=& -\frac{\sqrt{\delta_n}f(r)}{r} Y_{nm}\hat{\bb{r}}-\frac{1}{r}\frac{\partial (rf(r))}{\partial r}u_{nm},\\
\label{relation2}
\hat{\bb{r}}\times\mbox{curl}\,(f(r)\bb{V}_{nm}) &=& -\frac{1}{r}\frac{\partial (rf(r))}{\partial r}\bb{V}_{nm},\\
\label{relation3}
\hat{\bb{r}}\times\mbox{curl}\,(f(r)Y_{nm}\hat{\bb{r}}) &=& \frac{\sqrt{\delta_n}f(r)}{r}u_{nm},\\
\label{relation4}
\mbox{div}\,(f(r)u_{nm}) &=& -\frac{\sqrt{\delta_n}f(r)}{r}Y_{nm}.
\en
As done in 2D,
we split a radiating solution $v^{sc}$ to the Navier equation (\ref{eq:Naiver}) into its compressional and  shear parts,
\be\label{eq:8}
v^{sc}=\mbox{grad}\,\psi_p+\bb{\psi}_s,\quad \mbox{div}\,\bb{\psi}_s=0.
\en
where $\psi_p$ is a scalar function satisfying
\be
\label{pre1}
\Delta\psi_p+k_p^2 =0,\quad
\lim_{r \to \infty} r\left(\frac{\partial \psi_p}{\partial r}-ik_p\psi_p\right) = 0,
\en
and the vector function $\bb{\psi}_s$ fulfills
\be
\label{she1}
\mbox{curl}\,\mbox{curl}\,\bb{\psi}_s-k_s^2\bb{\psi}_s = 0,\quad
\lim_{r \to \infty} r\left(\frac{\partial \bb{\psi}_s}{\partial r}-ik_s\bb{\psi}_s\right) =0.
\en
The solutions of (\ref{pre1}) and (\ref{she1}) in $|x|\geq R$ can be expressed as
\be\label{eq:9}
&&\psi_p=\sum_{n=0}^\infty\sum_{m=-n}^n \frac{h_n^{(1)}(k_pr)}{h_n^{(1)}(k_pR)}\psi_p^{nm} Y_{nm}(\theta,\phi),\\ \label{eq:10}
&&\bb{\psi}_s=\sum_{n=0}^\infty\sum_{m=-n}^n \left\{ \frac{h_n^{(1)}(k_sr)}{h_n^{(1)}(k_sR)}{\psi}_{s,1}^{nm} \bb{V}_{nm}(\theta,\phi) +\mbox{curl}\left[ \frac{h_n^{(1)}(k_sr)}{h_n^{(1)}(k_sR)}{\psi}_{s,2}^{nm} \bb{V}_{nm}(\theta,\phi)\right]\right\},
\en
where $\psi_p^{nm}, {\psi}_{s,j}^{nm} (j=1,2) \in \C$ and $h_n^{(1)}$ is the spherical bessel function of the third kind
of order $n$. A direct calculation implies that
\be
\label{generalsolution}
v^{sc}(x) &=& \sum_{n=0}^\infty\sum_{m=-n}^n \frac{h_n^{(1)}(k_sr)}{h_n^{(1)}(k_sR)} {\psi}_{s,1}^{nm}\bb{V}_{nm}(\theta,\phi)\no\\
&+& \sum_{n=0}^\infty\sum_{m=-n}^n \left\{ \frac{\sqrt{\delta_n}h_n^{(1)}(k_pr)}{rh_n^{(1)}(k_pR)}\psi_p^{nm} -\left[\frac{h_n^{(1)}(k_sr)}{rh_n^{(1)}(k_sR)} +\frac{k_s{h_n^{(1)}}'(k_sr)}{h_n^{(1)}(k_sR)}\right] {\psi}_{s,2}^{nm}\right\}u_{nm}(\theta,\phi)\no\\
&+& \sum_{n=0}^\infty\sum_{m=-n}^n \left\{ \frac{k_p{h_n^{(1)}}'(k_pr)}{h_n^{(1)}(k_pR)}\psi_p^{nm} -\frac{\sqrt{\delta_n}h_n^{(1)}(k_sr)}{rh_n^{(1)}(k_sR)} {\psi}_{s,2}^{nm}\right\}Y_{nm}(\theta,\phi)\hat{\bb{r}}.
\en
Analogously to the 2D case, we set
\be\label{eq:15}
t_\alpha:=k_\alpha R,\quad\gamma_\alpha:=\frac{{h_n^{(1)}}'(t_\alpha)}{h_n^{(1)}(t_\alpha)},\quad
\beta_\alpha:=\frac{{h_n^{(1)}}''(t_\alpha)}{h_n^{(1)}(t_\alpha)},\quad \alpha=p,s.
\en
Due to the orthogonality relations for $u_{nm}$, $\bb{V}_{nm}$ and $Y_{nm}\hat{\bb{r}}$ we derive  from
(\ref{generalsolution}) that
\ben
\left(v^{sc}|_{\G_R},\bb{V}_{nm}\right) &=& {\psi}_{s,1}^{nm},\\
\left(v^{sc}|_{\G_R},u_{nm}\right) &=& \frac{1}{R} \left[\sqrt{\delta_n}\psi_p^{nm}-(1+t_s\gamma_s){\psi}_{s,2}^{nm}\right],\\
\left(v^{sc}|_{\G_R},Y_{nm}\hat{\bb{r}}\right) &=& \frac{1}{R} \left(t_p\gamma_p\psi_p^{nm}-\sqrt{\delta_n}{\psi}_{s,2}^{nm}\right).
\enn
In other words,
\be
\label{SolR}
A_n\begin{bmatrix}
{\psi}_{s,1}^{nm} \\
{\psi}_{s,2}^{nm} \\
\psi_p^{nm}
\end{bmatrix}
=R\begin{bmatrix}
\left(v^{sc}|_{\G_R},\bb{V}_{nm}\right) \\
\left(v^{sc}|_{\G_R},u_{nm}\right) \\
\left(v^{sc}|_{\G_R},Y_{nm}\hat{\bb{r}}\right)
\end{bmatrix}
, \quad
A_n:=\begin{bmatrix}
R & 0 & 0 \\
0 & -1-t_s\gamma_s & \sqrt{\delta_n} \\
0 & -\sqrt{\delta_n} & t_p\gamma_p
\end{bmatrix}.
\en

\begin{lemma}
The matrix $A_n$ is invertible for all $n\geq 0$, $R>0$, $k_p>0$ and $k_s>0$. Its inverse is given by
\be
\label{MatrixA-3}
A_n^{-1}=\begin{bmatrix}
\frac{1}{R} & 0 & 0 \\
0 & \frac{t_p\gamma_p}{\Lambda_n} & -\frac{\sqrt{\delta_n}}{\Lambda_n}\\
0 & \frac{\sqrt{\delta_n}}{\Lambda_n} & \frac{-1-t_s\gamma_s}{\Lambda_n}
\end{bmatrix},\quad
\Lambda_n:=\delta_n-t_p\gamma_p(1+t_s\gamma_s).
\en
\end{lemma}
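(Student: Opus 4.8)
The plan is to exploit the block structure of $A_n$ and reduce invertibility to the nonvanishing of the scalar $\Lambda_n$, exactly as in the two-dimensional Lemma \ref{lem:An}. Note that $A_n$ is block-diagonal: the first coordinate decouples (contributing the factor $R$), and the remaining $2\times 2$ block
\[
M_n := \begin{bmatrix} -1-t_s\gamma_s & \sqrt{\delta_n} \\ -\sqrt{\delta_n} & t_p\gamma_p \end{bmatrix}
\]
has determinant $\det M_n = -t_p\gamma_p(1+t_s\gamma_s) + \delta_n = \Lambda_n$. Hence $\det A_n = R\,\Lambda_n$, so $A_n$ is invertible precisely when $\Lambda_n \neq 0$. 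Once this is established, the formula (\ref{MatrixA-3}) follows immediately: the decoupled entry inverts to $1/R$, while the adjugate (cofactor) formula for $M_n$ gives the lower-right block $\Lambda_n^{-1}\,[\,t_p\gamma_p,\,-\sqrt{\delta_n}\,;\,\sqrt{\delta_n},\,-1-t_s\gamma_s\,]$; I would simply record this verification.

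The heart of the argument is therefore to show $\Lambda_n \neq 0$ for every $n\ge 0$ and all positive $R, k_p, k_s$. Following the 2D template, I would first determine the sign of $\Ima(t_\alpha\gamma_\alpha)$, where $t_\alpha\gamma_\alpha = t_\alpha\,{h_n^{(1)}}'(t_\alpha)/h_n^{(1)}(t_\alpha)$. Writing $h_n^{(1)} = j_n + i\,y_n$ with $j_n, y_n$ the spherical Bessel functions of the first and second kind (real for real argument), a direct computation gives
\[
\Ima\left( z\,\frac{{h_n^{(1)}}'(z)}{h_n^{(1)}(z)} \right) = \frac{z\,\big(j_n y_n' - j_n' y_n\big)(z)}{|h_n^{(1)}(z)|^2} = \frac{1}{z\,|h_n^{(1)}(z)|^2} > 0,
\]
where I invoke the Wronskian identity $j_n(z)y_n'(z) - j_n'(z)y_n(z) = 1/z^2$ for spherical Bessel functions (the spherical analogue of the cylindrical identity used in Lemma \ref{lem:An}). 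Thus $\Ima(t_p\gamma_p) > 0$ and $\Ima(t_s\gamma_s) > 0$.

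With $a := t_p\gamma_p = a_1 + i\,a_2$ and $b := t_s\gamma_s = b_1 + i\,b_2$, where $a_2, b_2 > 0$, I have $\Lambda_n = \delta_n - a - ab$, and I would show its real and imaginary parts cannot vanish together. The condition $\Ima \Lambda_n = 0$ reads $a_2(1+b_1) + a_1 b_2 = 0$, which (using $b_2 > 0$) gives $a_1 = -a_2(1+b_1)/b_2$. Substituting into $\real\Lambda_n = \delta_n - a_1(1+b_1) + a_2 b_2$ yields
\[
\real\Lambda_n = \delta_n + \frac{a_2(1+b_1)^2}{b_2} + a_2 b_2 > 0,
\]
since $\delta_n = n(n+1)\ge 0$ and $a_2, b_2 > 0$. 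Hence $\Lambda_n \neq 0$ whenever $\Ima\Lambda_n = 0$, and trivially $\Lambda_n \neq 0$ when $\Ima\Lambda_n \neq 0$; this covers all $n\ge 0$ (including $n=0$, where $\delta_0=0$ and $\Lambda_0 = -a(1+b)$ with both factors nonzero).

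The main obstacle I anticipate is bookkeeping rather than conceptual: pinning down the correct Wronskian normalization for the spherical (as opposed to cylindrical) Bessel functions, and carrying the sign-sensitive algebra in the final step so that every surviving term in $\real\Lambda_n$ is manifestly nonnegative. No compactness, radiation, or uniqueness input is required here; the strict positivity of $\Ima(t_\alpha\gamma_\alpha)$ furnished by the Wronskian does all the work, precisely paralleling the two-dimensional case.
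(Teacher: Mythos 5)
Your proof is correct, but it follows a genuinely different route from the paper's own 3D proof. You reduce to $\Lambda_n\neq 0$ exactly as the paper does (the block decomposition, $\det A_n=R\,\Lambda_n$, and the adjugate formula for the $2\times 2$ block are all right), but the paper then invokes N\'ed\'elec's Theorem 2.6.1 \cite{Nedelec}, which supplies estimates on \emph{both} parts of $I_n(z)=z{h_n^{(1)}}'(z)/h_n^{(1)}(z)$, namely
\begin{equation*}
1\leq -\real I_n(z)\leq n+1,\qquad 0<\Ima I_n(z)=\frac{1}{z|h_n^{(1)}(z)|^2}\leq z\quad\text{for all } z>0,
\end{equation*}
and uses the real-part bound to conclude outright that
\begin{equation*}
\Ima(\Lambda_n)=-\Ima I_n(t_p)\bigl(1+\real I_n(t_s)\bigr)-\real I_n(t_p)\,\Ima I_n(t_s)>0
\end{equation*}
for every $n$, since $1+\real I_n(t_s)\leq 0$ and $-\real I_n(t_p)\geq 1$. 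You instead use only the Wronskian-derived positivity $\Ima(t_\alpha\gamma_\alpha)>0$ (your spherical Wronskian $j_ny_n'-j_n'y_n=z^{-2}$ and the resulting formula agree with the paper's) and then run the dichotomy of the two-dimensional Lemma \ref{lem:An}: either $\Ima\Lambda_n\neq 0$, or its vanishing forces $\real\Lambda_n=\delta_n+a_2(1+b_1)^2/b_2+a_2b_2>0$; your substitution algebra checks out, and the case $n=0$ is covered. The trade-off: your argument is more elementary and self-contained, needing no information about $\real I_n$ and thus transplanting the 2D template verbatim to 3D; the paper's argument imports a nontrivial monotonicity-type estimate on the real part but obtains the strictly stronger conclusion that $\Ima\Lambda_n$ is positive for all $n$, rather than merely that $\Lambda_n$ is nonzero.
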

\begin{proof}
It's sufficient to prove that $\mbox{det}(A_n)\ne 0$, or equivalently, $\Lambda_n\ne 0$. Setting
$
I_n(z):=z{h_n^{(1)}}'(z)/h_n^{(1)}(z)$,
 we have
$\Lambda_n=\delta_n-I_n(t_p)-I_n(t_p)I_n(t_s)$. Recalling from \cite[Theorem 2.6.1]{Nedelec} that
\be\label{eq:13}
1\leq -\real I_n(z)\leq n+1,\quad 0<\Ima I_n(z)=\frac{1}{z |h_n^{(1)}(z)|^2}\leq z\quad \mbox{for all}\quad z>0,
\en we obtain
\ben
\Ima(\Lambda_n)=-\Ima I_n(t_p) (1+\real(I_n(t_s))-\real I_n(t_p)\Ima I_n(t_s)>0.
\enn

\end{proof}
The equation (\ref{SolR}) implies \textcolor{rot}{the following remark}.
\begin{remark}\label{Remark-3}
The unique radiating solution $v^{sc}$ to the boundary value problem (\ref{DtN}) can be represented in the form of (\ref{radiation-solution}) and (\ref{radiation-solution-ps}), where the coefficients $\psi_p^{n,m}$ and
    $\psi_{s,j}^{n,m}$ ($j=1,2$) are given by
    \ben
\begin{bmatrix}
\psi_{s,1}^{nm}\\
\psi_{s,2}^{nm}\\
\psi_p^{nm}
\end{bmatrix}
=R A_n^{-1}\begin{bmatrix}
\left(w,\bb{V}_{nm}\right) \\
\left(w,u_{nm}\right)\\
\left(w,Y_{nm}\hat{\bb{r}}\right)
\end{bmatrix}.
    \enn
\end{remark}
We now consider the generalized stress operator
\be\label{ge-stress}
\widetilde{\mathcal{T}}v^{sc}=(\mu+\widetilde{\mu})\hat{\bb{r}}\cdot\mbox{grad}\,v^{sc} + \widetilde{\lambda}\hat{\bb{r}}\,\mbox{div}\,v^{sc} + \widetilde{\mu}\hat{\bb{r}}\times\mbox{curl}\,v^{sc},
\en
where $\widetilde{\lambda}, \widetilde{\mu}\in \R$ satisfying $\widetilde{\lambda}+\widetilde{\mu}=\lambda+\mu$. Using the notation introduced in (\ref{eq:15}),
the first and second terms on the right hand side of (\ref{ge-stress}) can be rewritten respectively as
\ben
(\hat{\bb{r}}\cdot\mbox{grad}\,v^{sc})\big|_{\G_R} &=& \left(\frac{\partial v^{sc}}{\partial r}\right)\bigg|_{\G_R} \\
&=& \sum_{n=0}^\infty\sum_{m=-n}^n \frac{t_s\gamma_s}{R}{\psi}_{s,1}^{nm} \bb{V}_{nm}(\theta,\phi)\no\\
&+& \sum_{n=0}^\infty\sum_{m=-n}^n \frac{1}{R^2}\left[ \sqrt{\delta_n}\left(t_p\gamma_p-1\right)\psi_p^{nm} +\left(1-t_s\gamma_s-t_s^2\beta_s\right) {\psi}_{s,2}^{nm}\right]u_{nm}(\theta,\phi)\no\\
&+& \sum_{n=0}^\infty\sum_{m=-n}^n \frac{1}{R^2}\left[ t_p^2\beta_p\psi_p^{nm} +\sqrt{\delta_n}\left(1-t_s\gamma_s\right) {\psi}_{s,2}^{nm}\right]Y_{nm}(\theta,\phi)\hat{\bb{r}},
\enn
and
\ben
(\hat{\bb{r}}\,\mbox{div}\,v^{sc})\big|_{\G_R} &=& \left(\hat{\bb{r}}\Delta\psi_p\right)\big|_{\G_R}
= \left(-k_p^2\psi_p\hat{\bb{r}}\right)\big|_{\G_R}
= -\sum_{n=0}^\infty\sum_{m=-n}^n \frac{t_p^2}{R^2}\psi_p^{nm} Y_{nm}(\theta,\phi)\hat{\bb{r}}.
\enn
Since $h_n^{(1)}(k_sr)\bb{V}_{nm}(\theta,\phi)$ is a radiating solution of (\ref{she1}) and
\ben
\hat{\bb{r}}\times\bb{V}_{nm} = \hat{\bb{r}}\left(\hat{\bb{r}}\cdot u_{nm} \right)-u_{nm}(\hat{\bb{r}}\cdot\hat{\bb{r}})
= -u_{nm},
\enn
the third term of $\widetilde{\mathcal{T}}v^{sc}$ in (\ref{ge-stress}) takes the form
\ben
(\hat{\bb{r}}\times\mbox{curl}\,v^{sc})\big|_{\G_R} &=& (\hat{\bb{r}}\times\mbox{curl}\,\bb{\psi}_s)\big|_{\G_R} \\
&=& \sum_{n=0}^\infty\sum_{m=-n}^n \hat{\bb{r}}\times\mbox{curl}\left[ \frac{h_n^{(1)}(k_sr)}{h_n^{(1)}(k_sR)}{\psi}_{s,1}^{nm} \bb{V}_{nm}(\theta,\phi)\right] \\
&&+ \sum_{n=0}^\infty\sum_{m=-n}^n \left\{\hat{\bb{r}}\times\mbox{curl}\,\mbox{curl}\left[{\psi}_{s,2}^{nm} \frac{h_n^{(1)}(k_sr)}{h_n^{(1)}(k_sR)} \bb{V}_{nm}(\theta,\phi)\right]\right\} \\
&=& -\sum_{n=0}^\infty\sum_{m=-n}^n \frac{1}{R}\left(1+t_s\gamma_s\right){\psi}_{s,1}^{nm} \bb{V}_{nm}(\theta,\phi) \\
&&- \sum_{n=0}^\infty\sum_{m=-n}^n \frac{t_s^2}{R^2}{\psi}_{s,2}^{nm} u_{nm}(\theta,\phi).
\enn
Therefore,
\ben
\left(\widetilde{\mathcal{T}}v^{sc}|_{\G_R},\bb{V}_{nm}\right) &=& \frac{1}{R}\left(\mu t_s\gamma_s-\widetilde{\mu}\right){\psi}_{s,1}^{nm},\\
\left(\widetilde{\mathcal{T}}v^{sc}|_{\G_R},u_{nm}\right) &=& \frac{1}{R^2} \left\{\sqrt{\delta_n}(\mu+\widetilde{\mu})\left(t_p\gamma_p-1\right) \psi_p^{nm}+\left[(\mu+\widetilde{\mu})\left( 1-t_s\gamma_s-t_s^2\beta_s\right)-\widetilde{\mu}t_s^2\right] {\psi}_{s,2}^{nm}\right\},\\
\left(\widetilde{\mathcal{T}}v^{sc}|_{\G_R},Y_{nm}\hat{\bb{r}}\right) &=& \frac{1}{R^2} \left\{\left[(\mu+\widetilde{\mu})t_p^2\beta_p -\widetilde{\lambda}t_p^2 \right] \psi_p^{nm}+\sqrt{\delta_n}(\mu+\widetilde{\mu})\left(1-t_s\gamma_s\right) {\psi}_{s,2}^{nm}\right\}.
\enn
Set the matrices
\be
\label{MatrixB-3}
B_n:=\begin{bmatrix}
R\left(\mu t_s\gamma_s-\widetilde{\mu}\right) & 0 & 0 \\
0 & (\mu+\widetilde{\mu})\left( 1-t_s\gamma_s-t_s^2\beta_s\right)-\widetilde{\mu}t_s^2 & \sqrt{\delta_n}(\mu+\widetilde{\mu})\left(t_p\gamma_p-1\right) \\
0 & \sqrt{\delta_n}(\mu+\widetilde{\mu})\left(1-t_s\gamma_s\right) & (\mu+\widetilde{\mu})t_p^2\beta_p -\widetilde{\lambda}t_p^2
\end{bmatrix},
\en and define $W_n:=1/R B_nA_n^{-1}$.
Then we obtain
\be
\label{TractionR-3}
\begin{bmatrix}
\left(\widetilde{\mathcal{T}}v^{sc}|_{\G_R},\bb{V}_{nm}\right) \\
\left(\widetilde{\mathcal{T}}v^{sc}|_{\G_R},u_{nm}\right) \\
\left(\widetilde{\mathcal{T}}v^{sc}|_{\G_R},Y_{nm}\hat{\bb{r}}\right)
\end{bmatrix}
=B_n\begin{bmatrix}
{\psi}_{s,1}^{nm} \\
{\psi}_{s,2}^{nm} \\
\psi_p^{nm}
\end{bmatrix}=W_n \begin{bmatrix}
\left(v^{sc}|_{\G_R},\bb{V}_{nm}\right) \\
\left(v^{sc}|_{\G_R},u_{nm}\right) \\
\left(v^{sc}|_{\G_R},Y_{nm}\hat{\bb{r}}\right)
\end{bmatrix}.
\en
The above identity links the generalized stress operator $\widetilde{\mathcal{T}}v^{sc}|_{\G_R}$ and
$v^{sc}|_{\G_R}$ in the coordinate system $(\bb{V}_{nm}, u_{nm},Y_{nm}\hat{\bb{r}} )$ of the vector space
$(L^2(\s^2))^3$. Below we shall investigate properties of the three dimensional DtN \textcolor{rot}{map} $\widetilde{\mathcal{T}}$ using (\ref{TractionR-3}).
\begin{lemma}\label{Lem:generalizedDtN-3D}
\begin{description}
\item[(i)] $\widetilde{\mathcal{T}}$ is a bounded linear \textcolor{rot}{operator} from $\textcolor{rot}{(H^{s}(\Gamma_R))^3}$  to $\textcolor{rot}{(H^{s-1}(\Gamma_R))^3}$ for all $s\in \R$.
\item[(ii)] The matrix $-\real W_n$ is positive definite for sufficiently large $n>0$. Hence $\widetilde{\mathcal{T}}$ is the sum of a positive operator and a compact operator over  $\textcolor{rot}{(H^{1/2}(\Gamma_R))^3}$.
\item[(iii)] Lemma \ref{lem:Rellich-2D} remains valid for the generalized DtN \textcolor{rot}{map} $\widetilde{\mathcal{T}}$ in 3D.
\end{description}
\end{lemma}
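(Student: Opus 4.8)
The plan is to mirror the two-dimensional arguments of Lemmas \ref{Lem:generalizedDtN} and \ref{lem:Rellich-2D}, exploiting that, by (\ref{TractionR-3}), the operator $\widetilde{\mathcal{T}}$ is diagonalized by the matrices $W_n=\frac1R B_nA_n^{-1}$ in the orthonormal basis $\{\bb{V}_{nm},u_{nm},Y_{nm}\hat{\bb{r}}\}$ of $(L^2(\s^2))^3$. The crucial structural observation is that both $A_n$ in (\ref{SolR}) and $B_n$ in (\ref{MatrixB-3}) are block diagonal, decoupling the $\bb{V}_{nm}$-component (a scalar block) from the $(u_{nm},Y_{nm}\hat{\bb{r}})$-component (a $2\times2$ block); hence $A_n^{-1}$, $W_n$ and $A_n^*B_n$ inherit the same $1\oplus2$ block structure, and each assertion reduces to a scalar statement and a $2\times2$ statement. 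As preparation I would record the large-order asymptotics of the spherical Hankel ratios: from the differential equation $z^2{h_n^{(1)}}''+2z{h_n^{(1)}}'+(z^2-\delta_n)h_n^{(1)}=0$ one obtains the analogue of (\ref{eq:4}), namely $t_\alpha^2\beta_\alpha=\delta_n-t_\alpha^2-2t_\alpha\gamma_\alpha$, and from the recurrence relations one gets $\gamma_\alpha=-\frac{n}{t_\alpha}+O(\frac1n)$, whence $\Lambda_n=\delta_n-t_p\gamma_p(1+t_s\gamma_s)\sim 2n$ as $n\to\infty$.

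For (i), I would insert these asymptotics into the entries of $W_n$, exactly as the $W_n^{(i,j)}$ were computed in the planar proof, to obtain the max-norm bound $\|W_n\|_{\max}\le C\,n$ uniformly in $n$ (the off-diagonal $\sqrt{\delta_n}\sim n$ couplings contribute only at the same linear order). Since the vector Sobolev norm on the sphere is $\|w\|_{(H^s(\G_R))^3}^2=\sum_{n,m}(1+n)^{2s}|w^{nm}|^2$, with $w^{nm}$ the coefficient triple, the estimate $|W_nw^{nm}|^2\le C^2 n^2|w^{nm}|^2$ yields at once the boundedness $\widetilde{\mathcal{T}}\colon (H^s(\G_R))^3\to(H^{s-1}(\G_R))^3$ for every $s\in\R$.

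For (ii), I would write $\widetilde{W}_n:=-\real W_n=-(W_n+W_n^*)/2$ and treat the two blocks separately. The scalar $\bb{V}_{nm}$-block of $-\real W_n$ is positive for large $n$ because its leading term is of order $\frac{2\mu}{R}n$ (equivalently, $\Ima\gamma_s>0$ fixes the sign). For the $2\times2$ block I would expand its entries to leading order and verify the two Sylvester conditions, a positive $(1,1)$ entry and a positive determinant; as in the plane case the determinant is expected to have leading coefficient of the form $4\mu^2(\lambda+2\mu)^2-[(\lambda-\widetilde{\lambda})(\lambda+3\mu)+2\mu^2]^2>0$, which is exactly what the hypothesis (\ref{eq:7}) guarantees. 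Thus there is a threshold $M$ with $-\real W_n$ positive definite for all $n\ge M$, and splitting $\widetilde{\mathcal{T}}$ into the tail sum over $n\ge M$ (a positive operator) and the finite sum over $n<M$ (a finite-dimensional, hence compact, operator) gives the decomposition. This is the hard part: one must carry the spherical-Hankel asymptotics through the $\sqrt{\delta_n}$-coupled $2\times2$ block and confirm that the determinant's sign is governed precisely by (\ref{eq:7}), rather than by some strictly stronger three-dimensional condition.

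For (iii), following Lemma \ref{lem:Rellich-2D} I would expand a radiating $u^{sc}$ as in (\ref{eq:8})--(\ref{generalsolution}) with coefficient vectors $\Psi_{nm}=(\psi_{s,1}^{nm},\psi_{s,2}^{nm},\psi_p^{nm})^\top$, and use (\ref{SolR}), (\ref{TractionR-3}) and Parseval to write $\int_{\G_R}\widetilde{\mathcal{T}}(u^{sc}|_{\G_R})\cdot\overline{u^{sc}}\,ds$ as a positive multiple of $\sum_{n,m}(A_n^*B_n\Psi_{nm},\Psi_{nm})$. Using $t_\alpha^2\beta_\alpha=\delta_n-t_\alpha^2-2t_\alpha\gamma_\alpha$, $\widetilde{\lambda}+\widetilde{\mu}=\lambda+\mu$, and the identity $\Ima I_n(z)=1/(z|h_n^{(1)}(z)|^2)>0$ from (\ref{eq:13}), I would compute $\Ima(A_n^*B_n)$ and show, block by block, that it is the diagonal strictly positive matrix with entries proportional to $1/|h_n^{(1)}(t_s)|^2$, $1/|h_n^{(1)}(t_s)|^2$, $1/|h_n^{(1)}(t_p)|^2$ (the off-diagonal imaginary parts cancel since the relevant entries are complex conjugates). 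Vanishing of the imaginary part of the quadratic form then forces $\Psi_{nm}=0$ for all $n,m$, so $u^{sc}\equiv0$ in $|x|\ge R$. Once the spherical-harmonic orthogonality and the asymptotics above are in place, both (i) and (iii) are essentially routine transcriptions of the planar computations.
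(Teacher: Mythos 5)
Your overall architecture is exactly the paper's: exploit the $1\oplus 2$ block structure of $A_n$ and $B_n$, compute the entries of $W_n=\frac1R B_nA_n^{-1}$, use large-$n$ asymptotics of spherical Hankel ratios plus Sylvester's criterion for (ii), and for (iii) show that $\Ima(A_n^*B_n)$ is a strictly positive diagonal matrix (your guess that its entries go like $1/|h_n^{(1)}(t_s)|^2$, $1/|h_n^{(1)}(t_s)|^2$, $1/|h_n^{(1)}(t_p)|^2$ is correct, and your derivation of $t_\alpha^2\beta_\alpha=\delta_n-t_\alpha^2-2t_\alpha\gamma_\alpha$ from the spherical Bessel ODE is a clean alternative to the paper's recurrence manipulation). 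Part (iii) is sound as proposed, since it rests only on these exact identities and the Wronskian, not on asymptotics.

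However, there is a genuine gap in your preparatory asymptotics, and it sits precisely where you yourself located the hard part. In 3D the recurrence is ${h_n^{(1)}}'(z)=h_{n-1}^{(1)}(z)-\frac{n+1}{z}h_n^{(1)}(z)$ (not $\frac{n}{z}$ as for cylindrical Hankel functions), so the correct expansion is
\begin{equation*}
\gamma_\alpha=\frac{t_\alpha}{2n}-\frac{n+1}{t_\alpha}+O\!\left(\frac{1}{n^2}\right),
\end{equation*}
not $\gamma_\alpha=-\frac{n}{t_\alpha}+O(\frac1n)$; the discrepancy is $O(1)$, not $O(\frac1n)$. This matters because in $\Lambda_n=\delta_n-t_p\gamma_p(1+t_s\gamma_s)$ the quadratic terms cancel exactly through the shifts $\delta_n=n(n+1)$, $t_p\gamma_p\approx-(n+1)$, $1+t_s\gamma_s\approx-n$, leaving the \emph{bounded} limit $\Lambda_n=\frac{t_p^2+t_s^2}{2}+O(\frac1n)=\frac{R^2\rho_0\omega^2(\lambda+3\mu)}{2\mu(\lambda+2\mu)}+O(\frac1n)$ --- the same constant as in 2D --- whereas your values give $\Lambda_n\sim 2n$. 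Since $\Lambda_n$ divides every entry of the $2\times2$ block of $W_n$, your asymptotics make those entries come out $O(1)$ instead of the true linear growth $W_n^{(2,2)},W_n^{(3,3)}=-\frac{2\mu(\lambda+2\mu)}{R(\lambda+3\mu)}n+O(1)$, and the determinant of the corresponding block of $-\real W_n$ comes out $O(1)$ instead of $\frac{4\mu^2(\lambda+2\mu)^2-[(\lambda-\widetilde\lambda)(\lambda+3\mu)+2\mu^2]^2}{R^2(\lambda+3\mu)^2}\,n^2+O(n)$. Consequently the positivity of $-\real W_n$ for large $n$, and in particular its equivalence with hypothesis (\ref{eq:7}), cannot be derived from your expansions: part (ii) collapses (one would instead face a spurious frequency-dependent condition). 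The conclusion of (i) happens to survive, since bounded entries also satisfy $\|W_n\|_{\max}\le Cn$, but the derivation must be redone with the corrected $\gamma_\alpha$ and $\Lambda_n$ to be valid.
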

\begin{proof} (i) We only need to show that the max norm of the matrix $W_n$ is bounded by
\be\label{maxnorm-3}
||W_n||_{\mbox{max}}=R^{-1}||B_n\, A_n^{-1}||_{\mbox{max}}\leq C\,n,
\en
for some constant $C>0$ uniformly in all $n>0$, where the matrices $A_n$ and $B_n$ are given by
(\ref{MatrixA-3}) and (\ref{MatrixB-3}), respectively. For this purpose we need to derive the asymptotics of each entry
$W_n^{(i,j)}$ ($1\leq i,j\leq 3$) of $W_n$. In three dimensions, it holds that
\ben
{h_n^{(1)}}''(z) &=& \left(h_{n-1}^{(1)}(z)-\frac{n+1}{z} h_n^{(1)}(z)\right)'\\
&=& -h_n^{(1)}(z)+\frac{n-1}{z}h_{n-1}^{(1)}(z) +\frac{n+1}{z^2}h_n^{(1)}(z)-\frac{n+1}{z}\left(h_{n-1}^{(1)}(z) -\frac{n+1}{z} h_n^{(1)}(z)\right)\\
&=& \frac{(n+1)^2+n+1-z^2}{z^2}h_n^{(1)}(z)-\frac{2}{z} h_{n-1}^{(1)}(z)\\
&=& \frac{(n+1)^2+n+1-z^2}{z^2}h_n^{(1)}(z)-\frac{2}{z} \left({h_n^{(1)}}'(z)+\frac{n+1}{z} h_n^{(1)}(z)\right)\\
&=& \left(\frac{\delta_n}{z^2}-1\right)h_n^{(1)}(z)- \frac{2}{z} {h_n^{(1)}}'(z),
\enn
implying that
\be\label{eq:11}
\beta_p=\frac{\delta_n}{t_p^2}-1-\frac{2}{t_p}\gamma_p, \quad \beta_s=\frac{\delta_n}{t_s^2}-1-\frac{2}{t_s}\gamma_s.
\en
Note that the relations in (\ref{eq:11}) differ from those in two dimensions; cf. (\ref{eq:4}).
Using the expressions of $B_n$ and $A_n$, we obtain the entries of $W_n$ via straightforward calculations
\ben
W_n^{(1,1)} &=& \frac{\mu t_s\gamma_s-\widetilde{\mu}}{R},\\
W_n^{(1,2)}&=& W_n^{(2,1)}=W_n^{(1,3)}=W_n^{(3,1)}=0,\\
W_n^{(2,2)} &=& \frac{1}{R\Lambda_n}\left[ t_p\gamma_p(\mu+\widetilde{\mu})\left( 1-t_s\gamma_s-t_s^2\beta_s\right)-\widetilde{\mu}t_s^2t_p\gamma_p -\delta_n(\mu+\widetilde{\mu})\left(1-t_p\gamma_p\right)\right]\\
&=& \frac{1}{R\Lambda_n}\left[ (\mu+\widetilde{\mu})(t_pt_s\gamma_p\gamma_s-\delta_n+t_p\gamma_p) +\mu t_s^2t_p\gamma_p\right], \\
W_n^{(3,3)} &=& \frac{1}{R\Lambda_n}\left\{ -\delta_n(\mu+\widetilde{\mu})\left(1-t_s\gamma_s\right) -\left[(\mu+\widetilde{\mu})t_p^2\beta_p -\widetilde{\lambda}t_p^2\right]\left(1+t_s\gamma_s\right)\right\}\\
&=& \frac{1}{R\Lambda_n}\left[ t_p^2(\lambda+2\mu)(1+t_s\gamma_s) +2(\mu+\widetilde{\mu}) (t_pt_s\gamma_p\gamma_s-\delta_n+t_p\gamma_p)\right],\\
W_n^{(2,3)} &=& \frac{1}{R\Lambda_n}\left[ -\sqrt{\delta_n}(\mu+\widetilde{\mu})\left( 1-t_s\gamma_s-t_s^2\beta_s\right)+\widetilde{\mu}t_s^2\sqrt{\delta_n} +\sqrt{\delta_n}(\mu+\widetilde{\mu})\left(1-t_p\gamma_p\right) \left(1+t_s\gamma_s\right)\right] \\
&=& \frac{1}{R\Lambda_n}\left[ \sqrt{\delta_n}(\mu+\widetilde{\mu})(\delta_n-t_pt_s\gamma_p\gamma_s -t_p\gamma_p)-\mu t_s^2\sqrt{\delta_n}\right],\\
W_n^{(3,2)} &=& \frac{1}{R\Lambda_n}\left[ \sqrt{\delta_n}t_p\gamma_p(\mu+\widetilde{\mu}) \left(1-t_s\gamma_s\right) +t_p^2\beta_p\sqrt{\delta_n}(\mu+\widetilde{\mu}) -\widetilde{\lambda}t_p^2\sqrt{\delta_n}\right] \\
&=& \frac{1}{R\Lambda_n}\left[ \sqrt{\delta_n}(\mu+\widetilde{\mu})(\delta_n-t_pt_s\gamma_p\gamma_s -t_p\gamma_p)-(\lambda+2\mu)t_p^2\sqrt{\delta_n}\right],
\enn
in which we have used the relation (\ref{eq:11}) and the fact that
$\widetilde{\lambda}+\widetilde{\mu}=\lambda+\mu.$
Now, we need to derive the asymptotics of $W_n^{(i,j)}$ ($1\leq i,j \leq 3$) as $|n|$ tends to infinity.
From the series expansions of the spherical Bessel and Neumann functions we know
\ben
h_n^{(1)}(z)=\frac{1}{i}1\cdot 3\cdot\cdots\cdot (2n-1)\left[\frac{1}{z^{n+1}} +\frac{1}{2z^{n-1}(2n-1)}+O\left(\frac{1}{n^2}\right)\right],\quad n\rightarrow +\infty.
\enn
Then
\ben
\frac{h_{n-1}^{(1)}(z)}{h_n^{(1)}(z)} &=& \frac{1}{2n-1} \frac{\frac{1}{z^n} +\frac{1}{2z^{n-2}(2n-3)}+O\left(\frac{1}{n^2}\right)} {\frac{1}{z^{n+1}} +\frac{1}{2z^{n-1}(2n-1)}+O\left(\frac{1}{n^2}\right)}\\
&=& \left[\frac{1}{2n}+O\left(\frac{1}{n^2}\right)\right] \left[z+O\left(\frac{1}{n}\right)\right] \\
&=& \frac{z}{2n} +O\left(\frac{1}{n^2}\right),
\enn
which further leads to
\ben
\frac{{h_n^{(1)}}'(z)}{h_n^{(1)}(z)}=\frac{z}{2n}  -\frac{n+1}{z}+O\left(\frac{1}{n^2}\right),\quad n\rightarrow +\infty.
\enn
Therefore, as $n\rightarrow +\infty$,
\ben
W_n^{(1,1)} &=& -\frac{\mu}{R}n-\frac{\mu+\widetilde{\mu}}{R} +O\left(\frac{1}{n}\right),\\
W_n^{(2,2)} &=& -\frac{2\mu(\lambda+2\mu)} {R(\lambda+3\mu)}n+O(1),\\
W_n^{(3,3)} &=& -\frac{2\mu(\lambda+2\mu)} {R(\lambda+3\mu)}n+O(1),\\
W_n^{(2,3)} &=& \frac{\left[(\mu+\widetilde{\mu}) (\lambda+3\mu)-2\mu(\lambda+2\mu) \right]}{R(\lambda+3\mu)}n +O(1),\\
W_n^{(3,2)} &=& \frac{\left[(\mu+\widetilde{\mu}) (\lambda+3\mu)-2\mu(\lambda+2\mu) \right]}{R(\lambda+3\mu)}n +O(1).
\enn
This proves (\ref{maxnorm-3}) and thus the first assertion.

(ii) Set $\widetilde{W}_n:=-(W_n+W_n^*)/2$ for $n\ge 0$.
For sufficiently large $n>0$, we have
\ben
&&\widetilde{W}_n^{(1,1)}=\frac{\mu}{R}n+\frac{\mu+\widetilde{\mu}}{R} +O\left(\frac{1}{n}\right) >0,\\
&&\widetilde{W}_n^{(1,1)}\widetilde{W}_n^{(2,2)}= \frac{2\mu^2(\lambda+2\mu)} {R^2(\lambda+3\mu)}n^2+O(n)>0,\\
&&\mbox{det}\,(\widetilde{W}_n) =\widetilde{W}_n^{(1,1)} \left(\frac{4\mu^2(\lambda+2\mu)^2-[(\lambda-\widetilde{\lambda}) (\lambda+3\mu)+2\mu^2]^2}{R^2(\lambda+3\mu)^2}n^2+O(n)\right).
\enn
Recalling the assumption (\ref{eq:7}) on $\widetilde{\lambda}$ we see
\ben
4\mu^2(\lambda+2\mu)^2-[(\lambda-\widetilde{\lambda}) (\lambda+3\mu)+2\mu^2]^2>0.
\enn
This implies that $\mbox{det}\,\widetilde{W}_n$ is positive definite over $\C^3$ for sufficiently large $n$. The proof of the second assertion is compete.

(iii) Assume that a radiating solution $\textcolor{rot}{v^{sc}}$ to the Navier equation (\ref{eq:Naiver}) admits the series expansion (\ref{eq:8}), (\ref{eq:9}) and (\ref{eq:10})  with the vector coefficient $\Psi^{nm}:= ({\psi}_{s,1}^{nm}, {\psi}_{s,2}^{nm}, \psi_p^{nm} )^\top\in \C^3$.
Making use of (\ref{SolR}) and the first relation in (\ref{TractionR-3}), we get
\ben
\int_{\Gamma_R} \widetilde{\mathcal{T}}(v^{sc}|_{\Gamma_R})\cdot \overline{v^{sc}}\,ds
&=&\sum_{n\in \N_0}\sum_{m=-n}^n\left<R^{-2}B_n ^{nm}, R^{-1}A_n \Psi^{nm}\right>\\
&=&R^{-3}\sum_{n\in \N_0}\sum_{m=-n}^n\left<A_n^*B_n \Psi^{nm},\Psi^{nm}\right>.
\enn Here $\left<\cdot,\cdot\right>$ denotes the inner product over $\C^3$. Hence,
\be \label{eq:12}
\sum_{n\in \N_0}\sum_{m=-n}^n\left<\Ima (A_n^*B_n) \Psi^{nm},\Psi^{nm}\right>=0.
\en
To evaluate the product of $A_n^*$ and $B_n$ we need the identities (cf. (\ref{eq:13}), (\ref{eq:11}))
\be\label{eq:14}
\Ima (t_\alpha \gamma_\alpha)=1/(t_\alpha |h_n^{(1)}(t_\alpha)|^2)>0,
\qquad t_\alpha^2 \beta_\alpha=\delta_n-t_{\alpha}^2-2t_\alpha\gamma_\alpha,\qquad\alpha=p,s.
\en
Since
\ben
A_n^*=\begin{bmatrix}
R & 0 & 0 \\
0 & -1-t_s\overline{\gamma}_s & -\sqrt{\delta_n} \\
0 & \sqrt{\delta_n} & t_p\overline{\gamma}_p
\end{bmatrix},
\enn
direct calculations show that
\ben
\Ima (A_n^*B_n)=R^2\,\begin{bmatrix}
\mu \Ima(t_s\gamma_s) & 0 & 0\\
0    & \omega^2 \Ima(t_s\gamma_s) & 0\\
0    & 0 & \omega^2 \Ima(t_s\gamma_s)
\end{bmatrix}.
\enn
This together with (\ref{eq:12}) and the first relation in (\ref{eq:14})  yields $|\Psi^{nm}|=0$ for all $n\geq 0$, $m=-n,\cdots, n$. Therefore, $v^{sc}\equiv 0$ in $|x|\geq R$.
\end{proof}

\section{Reconstruction of multiple anisotropic obstacles}\label{sec:inverse}

In this section, we consider the inverse scattering problem of reconstructing the support of multiple unknown anisotropic obstacles from near-field measurement data. We first derive the Fr\'echet derivative of the near-field solution operator, which maps the boundaries of several disconnected scatterers to the measurement data. Then, as an application, we design an iterative approach to the inverse problem using the data of one or several incident directions and frequencies.

\subsection{Fr\'echet derivative of the solution operator}

Suppose that $\Om=\cup_{i=1}^{N_0} \Om_j$ is a union of several disconnected bounded components $\Omega_j\subset\R^N$.
Each component $\Om_j$ is supposed to be occupied by an anisotropic elastic obstacle with constant density $\rho_j>0$ and constant stiffness tensor $\mathcal{C}_j=(C_{j,klmn})_{k,l,m,n=1}^N$.
Assume that the boundary $\G_j$ of $\Om_j$ is $C^2$. Let $\Om_0:=B_R\backslash\ov{\Om}$.
Denote by $\rho_0>0$ and $\mathcal{C}_0=(C_{0,klmn})_{k,l,m,n=1}^N$ the density and stiffness tensor of the homogeneous isotropic background medium.
Set
\be\label{u}
u:=\begin{cases}
u_j, & x\in\Omega_j, \cr
u^{sc}+u^{in}, & x\in\R^N\backslash\ov{\Omega}.
\end{cases}
\en
We assume there is an a priori information that the unknown elastic scatterers $\Om_j$, $j=1,\cdots,N_0$, are embedded in the region $B_R$ for some $R>0$.
The variational formulation for the forward scattering problem in the truncated domain $B_R$ reads as follows: find $u\in X_R:=(H^1(B_R))^N$ such that
\be
\label{variational1}
a(u,v) =
\int_{\Gamma_R} f\cdot \ov{v}\,ds \quad\mbox{for all}\quad v\in X_R,\quad f:=(Tu^{in}-\mathcal{T}u^{in})|_{\Gamma_R},
\en
where
\ben
a(u,v)&:=&\sum_{j=0}^{N_0} A_{\Omega_j}(u,v)-\int_{\Gamma_R} \mathcal{T}u\cdot\ov{v}\,ds\\
A_{\Om_j}(u,v)&:=&\int_{\Omega_j} \left(\sum_{k,l,m,n=1}^N C_{j,klmn}\frac{\partial u_m}{\partial x_n}\frac{\partial \ov{v_k}}{\partial x_l} -\rho_j\,\omega^2u\cdot\ov{v}\right)dx,\quad j=0,1,\cdots, N_0.
\enn
Here  $\mathcal{T}$ is the DtN map introduced in the previous section.
  We study the following inverse problem:
\begin{description}
\item[\bf (IP):] Determine the boundaries $\G_1,\cdots,\G_{N_0}$ from knowledge of multi-frequency near-field measurements $u|_{\G_R}$ corresponding to the incident plane wave (\ref{planewave}) with one or several incident directions.
\end{description}

Let $u\in X_R$ be the unique solution to the variational problem (\ref{variational1}). Since each boundary $\G_j$ is $C^2$, we have $u\in (H^2(B_R))^N$. In this paper we define the near-field solution operator $J$ as
\be
\label{solutionOpera}
\mathcal{J}:\quad (\G_1,\cdots,\G_{N_0})\rightarrow u|_{\Gamma_R}.
\en
The mapping $J$ is obviously nonlinear.
To define the Fr\'echet derivative of $\mathcal{J}$ with respect to the boundary $\Gamma=\cup_{j=1}^{N_0} \Gamma_j$, we assume that the function
 \ben
 h_j=(h_{j,1},\cdots,h_{j,N})^\top\in (C^1(\G_j))^N,\quad \|h_j\|_{(C^1(\G_j))^N}\ll 1
 \enn
 is a small perturbation of $\G_j$. The perturbed boundary is given by
 \ben
 \G_{j,h}:=\{y\in\R^N: y=x+h_j(x), x\in\G_j\}.
 \enn
 \begin{definition}\label{def}
 The solution operator $\mathcal{J}$ is called Fr\'echet differentiable at $\G$ if there exists a linear bounded operator $\mathcal{J}_\G':(C^1(\G_1))^N\times\cdots\times (C^1(\G_{N_0}))^N \rightarrow (L^2(\G_R))^N$ such that
\ben
\|\mathcal{J}(\G_{1,h},\cdots,\G_{N_0,h}) -\mathcal{J}(\G_1,\cdots,\G_{N_0}) -\mathcal{J}_\G'(h_1,\cdots,h_{N_0})\|_{(L^2(\G_R))^N} =o\left(\sum_{j=1}^{N_0}\|h_j\|_{(C^1(\G_j))^N}\right).
\enn
The operator $\mathcal{J}_\G'$ is called the Fr\'echet derivative of $\mathcal{J}$ at $\G$.
\end{definition}
Given $h_j\in (C^1(\G_j))^N$, there exists an extension of $h_j$,  which we still denote by $h_j$, such that $h_j\in (C^1(\R^N))^N$, $\|h_j\|_{(C^1(\R^N))^N}\le c\|h_j\|_{(C^1(\G_j))^N}$ and $\mbox{supp}\,(h_j)\subset K_j$, where $K_j$ is a domain satisfying $\G_j\subset K_j\subset\subset B_R\backslash\ov{\left(\cup_{i=1,i\ne j}^{N_0} \Om_j\right)}$. Define the functions
\ben
h(x):=\sum_{j=1}^{N_0} h_j(x),\quad y=\xi^h(x)=x+h(x), \qquad x\in\R^N.
\enn
For small perturbations, $\xi^h$ is a diffeomorphism between $\G_j$ and $\G_{j,h}$. The inverse map of $\xi^h$ is denoted by $\eta^h$.
Corresponding to $\Omega_j$ ($j=0,1,\cdots,N_0$), we define
 \ben
 \Omega_{j,h}:=\{y\in\R^N: y=\xi^h(x), x\in\Om_j\},\quad j=1,2,\cdots,N_0,\qquad
 \Om_{0,h}:=B_R\backslash\ov{\cup_{j=1}^{N_0}\Omega_{j,h}}.
\enn
The differentiability of $\mathcal{J}$ at $\Gamma$ is stated as following.
\begin{theorem}
\label{theorem3.1}
Let $u$ (see (\ref{u})) be the unique solution of the variational problem (\ref{variational1}), and
let $h_j\in (C^1(\G_j))^N$, $j=1,\cdots,{N_0}$, be sufficiently small perturbations. Then the solution operator $\mathcal{J}$ is Fr\'echet differentiable at $\G$. Further, the Fr\'echet derivative $\mathcal{J}_\G'$ is given by $\mathcal{J}_\G'(h_1,\cdots,h_{N_0})=\widetilde{u}_0|_{\G_R}$, where $\widetilde{u}_0$ together with $\widetilde{u}_j$ ($j=1,\cdots,N_0$) is the unique weak solution of the boundary value problem:
\be
\label{derivative1}
\nabla\cdot (\mathcal{C}_j: \nabla \widetilde{u}_j)+\rho_j\omega^2 \widetilde{u}_j = 0&&\mbox{in}\quad\Omega_j,\; j=0,1,\cdots,{N_0},\\
\label{derivative3}
\widetilde{u}_j-\widetilde{u}_0 -f_j=0 &&\mbox{on}\quad\Gamma_j,\; j=1,\cdots,{N_0},\\
\label{derivative4}
\mathcal{N}_{\mathcal{C}}^-\widetilde{u}_j -\mathcal{N}_{\mathcal{C}}^+\widetilde{u}_0 -g_j=0 &&\mbox{on}\quad\Gamma_j,\; j=1,\cdots,{N_0},\\
\label{derivative5}
T \widetilde{u}_0 - \mathcal{T}\widetilde{u}_0=0&&\mbox{on}\quad\Gamma_R.
\en
where
\be
\label{dataf}
f_j = -(h_j\cdot\nu)\left[{\partial_\nu^- u_j}-{\partial_\nu^+ (u^{sc}+u^{in})}\right]|_{\Gamma_j}
\en
and the expressions of $g_j\in (H^{-1/2}(\Gamma_j))^N$ rely on the space dimensions. In 2D, we have
\be
\label{datagN2}
g_j &=& \omega^2(h_j\cdot\nu)\left[\rho_j u_j^--\rho_0 (u^{sc}+u^{in})^+\right]\nonumber\\
&&-\partial_\tau\left[\left((\sigma_j(u_j))^- -(\sigma_0(u^{sc}+u^{in}))^+\right)(h_{j,2},-h_{j,1})^\top\right],
\en
where $\partial_\tau=\nu^\perp\cdot\nabla$ is the tangential derivative. In 3D, it holds that
\be
\label{datagN3}
g_j = \omega^2(h_j\cdot\nu)\left[\rho_j u_j^--\rho_0 (u^{sc}+u^{in})^+\right]-\mbox{div}_{\G_j} \left((\bb{A}_j-\bb{A}_0)\times\nu\right),
\en
where $\mbox{div}_\G$ is the surface divergence operator on $\Gamma$ and $\bb{A}_j\in \C^{N\times N}$ are defined by
\be\label{AJ}
\bb{A}_j=\sigma_j(u_j)^-|_{\Gamma_j}\;\begin{bmatrix}
0 & -h_3 & h_2\\
-h_3 & 0 & h_1\\
h_2 & h_1 & 0
\end{bmatrix},\quad j=0,1,\cdots, N_0.
\en
\end{theorem}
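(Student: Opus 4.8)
The plan is to adapt the variational domain-differentiation technique of \cite{Kirsch1993,Hettlich} to the present transmission setting. The key observation is that each perturbation $h_j$ is supported in $K_j\subset\subset B_R$, so the diffeomorphism $\xi^h$ is the identity near $\G_R$; consequently the DtN operator $\mathcal{T}$ and the data $f$ in (\ref{variational1}) are left unchanged, and only the interior part of the sesquilinear form is affected by the deformation. Writing $u_h$ for the solution on the perturbed configuration and $\hat u_h:=u_h\circ\xi^h\in X_R$ for its pull-back to the \emph{fixed} domain $B_R$, the change of variables $y=\xi^h(x)$ (with $\nabla_y=(I+\nabla h)^{-\top}\nabla_x$ and volume element $\det(I+\nabla h)\,dx$) turns (\ref{variational1}) into $a_h(\hat u_h,v)=\int_{\G_R}f\cdot\ov v\,ds$, where $a_h$ agrees with $a$ except for $h$-dependent coefficients supported in $\cup_j K_j$.

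First I would prove differentiability of the pulled-back solution $h\mapsto\hat u_h$ at $h=0$. The coefficients of $a_h$ depend continuously differentiably on $h$, and the operator $\mathcal{A}_0=\mathcal{A}$ is boundedly invertible by Theorem \ref{theorem} together with Theorem \ref{strong-elliptic}; hence the perturbed operators $\mathcal{A}_h$ are invertible for $\|h\|$ small and $h\mapsto\mathcal{A}_h^{-1}$ is differentiable. Differentiating the operator equation $\mathcal{A}_h\hat u_h=\mathcal{F}$ in the direction $h$ then yields the \emph{material derivative} $\dot u\in X_R$, characterised by $a(\dot u,v)=-\dot a(u,v)$ for all $v\in X_R$, where $\dot a$ is the derivative of the form at $h=0$. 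Since the boundaries $\G_j$ are $C^2$ we have $u\in(H^2(B_R))^N$, so the \emph{shape derivative} $\tilde u:=\dot u-(h\cdot\nabla)u$ is well defined on each subdomain; the remaining task is to identify $\tilde u$ as the solution of (\ref{derivative1})--(\ref{derivative5}).

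Because $\mathcal{C}_j$ and $\rho_j$ are constant in each $\Om_j$, a direct computation shows that $\tilde u_j$ satisfies the homogeneous system (\ref{derivative1}) in the interior, the domain derivative of a solution being again a solution there. The transparent boundary condition (\ref{derivative5}) is inherited verbatim, since $\tilde u=\dot u$ in a neighbourhood of $\G_R$ (where $h\equiv0$) and the relation $T-\mathcal{T}$ there is linear and $h$-independent. The transmission relations (\ref{derivative3})--(\ref{derivative4}) arise by differentiating, along the moving interface $\G_{j,h}$, the continuity of displacement $u_{j,h}=u^{sc}_h+u^{in}$ and of traction $\mathcal{N}_{\mathcal{C}}^-u_{j,h}=\mathcal{N}_{\mathcal{C}}^+(u^{sc}_h+u^{in})$. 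For the displacement condition the Hadamard jump relation gives $\tilde u_j-\tilde u_0=-(h_j\cdot\nu)[\pa_\nu^-u_j-\pa_\nu^+(u^{sc}+u^{in})]$, which is exactly $f_j$ in (\ref{dataf}).

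The hard part will be the derivation of $g_j$, that is, the shape differentiation of the traction condition (\ref{derivative4}). In contrast to the displacement, the traction $\nu\cdot\sigma(u)$ depends explicitly on the normal $\nu$ and on the surface element, both of which vary as $\G_j$ is deformed; controlling these contributions requires the tangential (surface) calculus and an integration by parts along $\G_j$, so that the variations of $\nu$ and of $ds$ combine with the jump of the stress tensor into a pure tangential-divergence term. In two dimensions this collapses to the single tangential-derivative term of (\ref{datagN2}), whereas in three dimensions it assumes the form $-\mbox{div}_{\G_j}((\bb{A}_j-\bb{A}_0)\times\nu)$ of (\ref{datagN3}), with $\bb{A}_j$ built from $\sigma_j(u_j)^-$ as in (\ref{AJ}). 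Once $f_j$ and $g_j$ are in hand, well-posedness of the inhomogeneous transmission problem (\ref{derivative1})--(\ref{derivative5}) follows from the same Fredholm framework of Theorems \ref{theorem} and \ref{strong-elliptic} (uniqueness being inherited from Theorem \ref{theorem}); identifying $\mathcal{J}'_\G(h_1,\dots,h_{N_0})=\tilde u_0|_{\G_R}$ and verifying the $o(\sum_j\|h_j\|)$ remainder estimate of Definition \ref{def} then completes the argument.
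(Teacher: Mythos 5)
Your overall skeleton is the same as the paper's: pull back by $\xi^h$ (which is the identity near $\G_R$, so the DtN map and the right-hand side are untouched), linearize the pulled-back sesquilinear form at $h=0$, obtain the material derivative (your $\dot u$, the paper's $w$, characterized by $a(\dot u,v)=-\dot a(u,v)$), subtract $h\cdot\nabla u$ to form the shape derivative $\widetilde u$, and identify $\widetilde u$ as the solution of the transmission problem (\ref{derivative1})--(\ref{derivative5}). Up to that point your argument is sound and essentially equivalent to the paper's; the only cosmetic difference is that you invoke differentiability of $h\mapsto\mathcal{A}_h^{-1}$ where the paper proves the $o(\|h\|_{(C^1)^N})$ bound for $a(\widehat u-u-w,\cdot)$ directly and then applies the trace theorem.

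The genuine gap is at the decisive step: the derivation of $g_j$, i.e.\ formulas (\ref{datagN2}) and (\ref{datagN3}), which together with (\ref{dataf}) constitute the actual content of the theorem. You call this ``the hard part'' and then switch methods: you propose to differentiate the traction continuity condition pointwise along the moving interface $\G_{j,h}$, tracking the variations of $\nu$ and of the surface element. That computation is never carried out, and it is also the wrong tool inside the framework you set up: with only $u\in (H^2(B_R))^N$ regularity, the pointwise shape differentiation of the co-normal derivative $\nu\cdot\sigma(u)$ on a moving interface is exactly the delicate classical (Hadamard/integral-equation) route of \cite{Potthast1996,Louer2012,C1995}, and justifying it requires essentially the identities you are trying to establish. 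The point of the variational approach — and what the paper actually does — is that $g_j$ falls out of the \emph{volume} linearization with no differentiation of $\nu$ or $ds$ at all: one computes $b_j(u,v,h)$ explicitly (the paper's (\ref{b_j})), integrates by parts in each $\Om_j$ to obtain $b_j(u,v,h)=A_{\Om_j}(h\cdot\nabla u,v)+\rho_j\om^2\int_{\G_j}(h_j\cdot\nu)\,u\cdot\ov v\,ds+\int_{\G_j}\bigl[(h_j\cdot\nabla\ov v)\cdot(\nu\cdot\sigma_j(u_j))-(h_j\cdot\nu)(\sigma_j(u_j):\nabla\ov v)\bigr]ds$, observes that the last integrand is a \emph{purely tangential} derivative of $\ov v$ — equal to $\sigma_j(u_j)(h_{j,2},-h_{j,1})^\top\cdot\pa_\tau\ov v$ in 2D and to $\sum_i\nabla_{\G_j}v_i\cdot((\bb{A}_j(i,:))^\top\times\nu)$ in 3D — and then integrates by parts \emph{on} $\G_j$ to move $\pa_\tau$ (resp.\ $\mbox{div}_{\G_j}$) off the test function; the $A_{\Om_j}(h\cdot\nabla u,v)$ terms are absorbed into $a(\widetilde u,v)$ with $\widetilde u=\dot u-h\cdot\nabla u$ (the DtN term drops since $h\equiv 0$ near $\G_R$), and what remains is precisely the weak form (\ref{variational2}) with the data $g_j$. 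Without this computation (or a completed version of your boundary-differentiation alternative), the formulas for $g_j$ are asserted rather than proved. A second, more minor omission: solvability of the limiting transmission problem is not immediate from Theorem \ref{theorem}, since (\ref{derivative3}) is an inhomogeneous jump condition; one needs the trace lifting $\hat f_j\in (H^1(\Om_j))^N$ and the space $\mathcal{H}$ as in the paper's passage from (\ref{variational2}) to (\ref{variational3}) before the Fredholm argument applies.
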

\begin{proof}
Set the space
\ben
\mathcal{H}:=\{(v,w)\in (H^1(\Omega))^N\times (H^1(\Om_0))^N : v=w\quad\mbox{on}\quad\Gamma_j,\quad j=1,\cdots,{N_0} \}.
\enn
The variational problem of (\ref{derivative1})-(\ref{derivative5}) can be formulated as the problem of finding $\widetilde{u}_0\in (H^1(\Om_0))^N$, $\widetilde{u}_j\in (H^1(\Omega_j))^N$ such that $\widetilde{u}_j-\widetilde{u}_0=f_j$ on $\Gamma_j$, $j=1,\cdots,{N_0}$, and
\be
\label{variational2}
\sum_{j=0}^{N_0} A_{\Omega_j}(\widetilde{u}_j,v)-\int_{\Gamma_R} \mathcal{T}\widetilde{u}_0\cdot\ov{w}\,ds =
\sum_{j=1}^{N_0} \int_{\Gamma_j} g_j\cdot \ov{w}\,ds \quad\mbox{for all}\quad (v,w)\in\mathcal{H}.
\en
It follows from the regularity of $u$ that $f_j\in (H^{1/2}(\G_j))^N$ and $g_j\in (H^{-1/2}(\G_j))^N$. Let $\hat{f}_j\in (H^1(\Omega_j))^N$ be the trace lifting functions of $f_j$. Then the variational formulation (\ref{variational2}) \textcolor{rot}{search} for $\widetilde{u}_0\in (H^1(\Om_0))^N$ and $\hat{u}_j=\widetilde{u}_j-\hat{f}_j\in (H^1(\Omega_j))^N$ such that $\hat{u}_j=\widetilde{u}_0$ on $\Gamma_j$, $j=1,\cdots,{N_0}$ and
\be
\label{variational3}
\sum_{j=0}^{N_0} A_{\Omega_j}(\hat{u}_j,v)-\int_{\Gamma_R} \mathcal{T}\widetilde{u}_0\cdot\ov{w}\,ds  =
\sum_{j=1}^{N_0} \int_{\Gamma_j} g_j\cdot \ov{w}\,ds-\sum_{j=1}^{N_0} A_{\Omega_j}(\hat{f}_j,v) \quad\mbox{for all}\quad (v,w)\in\mathcal{H}.
\en
Applying Lemma \ref{DtN-lemma} and Theorem \ref{strong-elliptic}, we see that the above variational equation (\ref{variational3}) admits a unique solution. For the given functions $h_j\in (C^1(\Gamma_j))^N$, we extend them to $B_R$ in the same way as \textcolor{rot}{before}. Let $J_{\eta^h}$ and $J_{\xi^h}$ be the Jacobian matrices of the transforms $\eta^h$ and $\xi^h$, respectively. It then follows that
\ben
J_{\xi^h} &=& I+\nabla h,\quad
J_{\eta^h} = I-\nabla h+O\left(\|h\|_{C^1(B_R)^N}^2\right),\\
\mbox{det}(J_{\xi^h}) &=& 1+\nabla\cdot h+O\left(\|h\|_{C^1(B_R)^N}^2\right).
\enn
Consider the perturbed variational problem: find $u_h\in X_R$ such that
\be
\label{perturbedvariational}
\sum_{j=0}^{N_0} A_{\Omega_{j,h}}(u_h,v_h)-\int_{\Gamma_R} \mathcal{T}u_h\cdot\ov{v_h}\,ds =
\int_{\Gamma_R} f\cdot \ov{v_h}\,ds \quad\mbox{for all}\quad v_h\in X_R.
\en Here $f=(Tu^{in}-\mathcal{T}u^{in})|_{\Gamma_R}$.
Define $\widehat{u}=(\widehat{u}_1,\widehat{u}_2)^\top:=(u_h\circ\xi^h)(x)$. Then we have
\ben
\sum_{j=0}^{N_0} A_{\Omega_{j,h}}(u_h,v_h)
&=& \sum_{j=0}^{N_0}\int_{\Omega_j} \sum_{k,l,m,n=1}^N C_{j,klmn}\nabla^\top \widehat{u}_m J_{\eta^h}(:,n)J_{\eta^h}(:,l)^\top\nabla\ov{\widehat{v}_k}\,\mbox{det}(J_{\xi^h})\,dx\\
&&- \sum_{j=0}^{N_0}\rho_j\omega^2\int_{\Omega_j} \widehat{u}\cdot\ov{\widehat{v}}\,\mbox{det}(J_{\xi^h})\,dx
\enn
where $A(:,n)$ means the $n$-th column of the matrix $A$. From the stability of the direct scattering problem it follows that $\widehat{u}$ converges to $u$ in $X_R$ as $\|h\|_{(C^1(B_R))^N}\rightarrow 0$. Let $w\in X_R$ be the solution of the variational problem
\ben
a(w,v)= \sum_{j=0}^{N_0} b_j(u,v,h) \quad\mbox{for all}\quad v\in X_R,
\enn
where
\be\nonumber
b_j(u,v,h)&:=& \int_{\Omega_j}\sum_{k,l,m,n=1}^NC_{j,klmn}\left[\frac{\partial u_m}{\partial x_n}\frac{\partial h^\top}{\partial x_l}\nabla\ov{v_k} +\nabla^\top u_m\frac{\partial h}{\partial x_n}\frac{\partial \ov{v_k}}{\partial x_l} -(\nabla\cdot h)\frac{\partial u_m}{\partial x_n}\frac{\partial \ov{v_k}}{\partial x_l}\right]dx\nonumber\\ \label{b_j}
&&+ \rho_j\,\omega^2\int_{\Omega_j} (\nabla\cdot h)u\cdot\ov{v}\,dx.
\en
Then it's easy to prove that
\ben
\sup_{v\in X_R}\,a(\widehat{u}-u-w,v)/\|v\|_{X_R} =o\left(\|h\|_{(C^1(B_R))^N}\right).
\enn
Applying the trace theorem it follows that $a(\widehat{u}-u-w)/\|h\|_{(C^1(B_R))^N}$ tends to zero in $(H^{1/2}(\G_R))^N$ as $\|h\|_{(C^1(B_R))^N}$ tends to zero. By \textcolor{rot}{Definition} \ref{def}, we get $\mathcal{J}_\G'(h_1,\cdots,h_{N_0})=w|_{\G_R}$. Hence, it only remains to prove that
$w=\widetilde{u}_0$ on $\G_R$.

Below we are going to calculate $b_j(u,v,h)$ for $j=0,1,\cdots, N_0$.
 Set $(v,w)\in\mathcal{H}$. Using integration by parts and the relation
\ben
(\nabla\cdot h)(u\cdot v)=\nabla\cdot\left[(u\cdot v)h\right]-(h\cdot\nabla u)\cdot v-(h\cdot\nabla v)\cdot u,
\enn
the last term of (\ref{b_j}) can be \textcolor{rot}{written} as
\be\label{eq:16}
\rho_j\omega^2\int_{\Omega_j} (\nabla\cdot h)u\cdot\ov{v}\,dx
= \rho_j\omega^2 \int_{\Gamma_j} (h\cdot\nu)(u\cdot\ov{v})ds
- \rho_j\omega^2 \int_{\Omega_j} \left[(h\cdot\nabla u_j)\cdot\ov{v}+(h\cdot\nabla \ov{v})\cdot u\right]dx.
\en To compute the first integral on the right hand of (\ref{b_j}), we need the identities
\ben
\frac{\partial u_m}{\partial x_n}\frac{\partial h^\top}{\partial x_l}\nabla\ov{v_k}
&=& \frac{\partial}{\partial x_l}\left(\frac{\partial u_m}{\partial x_n}h^\top\nabla\ov{v_k}\right) -\frac{\partial^2u_m}{\partial x_l\partial x_n}\left(h^\top\nabla\ov{v_k}\right) -\frac{\partial u_m}{\partial x_n}h^\top\nabla\left(\frac{\partial \ov{v_k}}{\partial x_l}\right),\\
\nabla^\top u_m\frac{\partial h}{\partial x_n}\frac{\partial \ov{v_k}}{\partial x_l}
&=& \frac{\partial}{\partial x_n}\left(h^\top\nabla u_m\right)\frac{\partial \ov{v_k}}{\partial x_l}-\nabla\cdot\left(h\frac{\partial u_m}{\partial x_n}\frac{\partial \ov{v_k}}{\partial x_l}\right) +(\nabla\cdot h)\frac{\partial u_m}{\partial x_n}\frac{\partial \ov{v_k}}{\partial x_l}+ \frac{\partial u_m}{\partial x_n}h^\top\nabla\left(\frac{\partial \ov{v_k}}{\partial x_l}\right).
\enn
Making use of the previous two identities and applying again the integration by parts, it follows for $j\geq 1$ that
\be\no
&\quad& \int_{\Omega_j}\sum_{k,l,m,n=1}^NC_{j,klmn} \left[\frac{\partial u_m}{\partial x_n}\frac{\partial h^\top}{\partial x_l}\nabla\ov{v_k} +\nabla^\top u_m\frac{\partial h}{\partial x_n}\frac{\partial \ov{v_k}}{\partial x_l} -(\nabla\cdot h)\frac{\partial u_m}{\partial x_n}\frac{\partial \ov{v_k}}{\partial x_l}\right]dx\\ \no
&=& \int_{\Omega_j} \nabla\cdot \left[(h\cdot\nabla\ov{v})\cdot\sigma_j(u)-h(\sigma_j(u):\nabla\ov{v})\right]\,dx + \rho_j\omega^2 \int_{\Omega_j} (h\cdot\nabla \ov{v})\cdot u\,dx\\ \no
&&+ \int_{\Omega_j} \sum_{k,l,m,n=1}^N C_{j,klmn}\frac{\partial}{\partial x_n}(h\cdot\nabla u_m)\frac{\partial \ov{v_k}}{\partial x_l}\,dx\\ \no
&=& \int_{\Gamma_j} \left[(h_j\cdot\nabla\ov{v})\cdot (\nu\cdot\sigma_j(u_j)) -(h_j\cdot\nu)(\sigma_j(u_j):\nabla\ov{v})\right]\,ds + \rho_j\omega^2 \int_{\Omega_j} (h\cdot\nabla \ov{v})\cdot u\,dx\\ \label{eq:17}
&&+ \int_{\Omega_j} \sum_{k,l,m,n=1}^N C_{j,klmn}\frac{\partial}{\partial x_n}(h\cdot\nabla u_m)\frac{\partial \ov{v_k}}{\partial x_l}\,dx.
\en Next, we proceed with the space dimensions.
In two dimensions (i.e., $N=2$), we have
\ben
(h_j\cdot\nabla\ov{v})\cdot(\nu\cdot\sigma_j(u_j)) -(h_j\cdot\nu)(\sigma_j(u_j):\nabla\ov{v}) = \sigma_j(u_j)(h_{j,2},-h_{j,1})^\top\cdot\partial_\tau\ov{v}\quad\mbox{on}\;\Gamma_j.
\enn
Therefore, combining (\ref{b_j}), (\ref{eq:16}) and (\ref{eq:17}) yields
\ben
b_j(u,v,h) = A_{\Omega_j}(h\cdot\nabla u,v)+\rho_j\omega^2 \int_{\Gamma_j} (h_j\cdot\nu)(u_j\cdot\ov{v})ds -\int_{\Gamma_j} \partial_\tau\left[(\sigma_j(u_j))^-(h_{j,2},-h_{j,1})^\top\right]\cdot\ov{v}ds,
\enn for $j\geq 1$.
When $j=0$, we obtain in a similar manner that
\ben
b_0(u,v,h) &=& A_{\Omega_0}(h\cdot\nabla u,v)\\
&-&\sum_{j=1}^{N_0}\left\{\rho_0\omega^2 \int_{\Gamma_j} (h_j\cdot\nu)[(u^{sc}+u^{in})\cdot\ov{w}]ds -\int_{\Gamma_j} \partial_\tau\left[(\sigma_0(u^{sc}+u^{in}))^+(h_{j,2},-h_{j,1})^\top\right]\cdot\ov{w}ds \right\}.
\enn
Now define $\widetilde{u}=w-h\cdot\nabla u$ and set $\widetilde{u}_j:=\widetilde{u}|_{\Omega_j}$ for $j=0,1,\cdots,N_0$. We conclude that $\widetilde{u}_0|_{\Gamma_R}=w|_{\Gamma_R}$ and the formula (\ref{variational2}) holds with such $\widetilde{u}$. Furthermore, we have the transmission conditions
\ben
\widetilde{u}_j-\widetilde{u}_0 = -h_j\cdot[\nabla u_j-\nabla(u^{sc}+u^{in})]
= -(h_j\cdot\nu)[\partial_\nu^- u_j-\partial_\nu^+ (u^{sc}+u^{in})] \quad\mbox{on}\quad\Gamma_j,
\enn
since $u_j-u^{sc}=u^{in}$ on $\Gamma_j$. This prove the relation $\mathcal{J}_\G'(h_1,\cdots,h_{N_0})=\widetilde{u}_0|_{\G_R}$ in two dimensions.

If $N=3$, we recall the tangential gradient $\nabla_{\G}$ for a scalar function $u$ and the surface divergence $\mbox{div}_\G$ for a vector function $v$ by
\be
\label{notation}
\nabla\,u=\nabla_{\G}\,u+\nu\pa_\nu\,u,\quad \nabla\cdot v=\mbox{div}_\G\,v+\nu\cdot\pa_\nu\,v.
\en
In this case, the first integrand on the right hand side of (\ref{eq:17}) can be rewritten as
\ben
(h_j\cdot\nabla\ov{v})\cdot(\nu\cdot\sigma_j(u_j)) -(h_j\cdot\nu)(\sigma_j(u_j):\nabla\ov{v}) = \sum_{i=1}^3(\bb{A}_j(i,:))^\top\cdot(\nu\times\nabla\,v_i)
= \sum_{i=1}^3\nabla_\G\,v_i\cdot((\bb{A}_j(i,:))^\top\times\nu),
\enn
where the matrix $\bb{A}_j$ is given by (\ref{AJ}). Hence, by
 integration by part we find
\be\label{eq:18}
b_j(u,v,h) &=& A_{\Omega_j}(h\cdot\nabla u,v)+\rho_j\omega^2 \int_{\Gamma_j} (h_j\cdot\nu)(u_j\cdot\ov{v})ds -\int_{\Gamma_j} \mbox{div}_{\G_j} (\bb{A}_j\times\nu)\cdot\ov{v}ds
\en
for $j\geq 1$. Analogously,
\be\no
b_0(u,v,h) &=& A_{\Om_0}(h\cdot\nabla u,w)\\ \label{eq:19}
&&- \sum_{j=1}^{N_0}\left\{\rho_0\omega^2 \int_{\Gamma_j} (h_j\cdot\nu)[(u^{sc}+u^{in})\cdot\ov{w}]ds -\int_{\Gamma_j} \mbox{div}_{\G_j}(\bb{A}_0\times\nu)\cdot\ov{w}ds \right\},
\en
 From (\ref{eq:18}) and (\ref{eq:19}) we conclude the variational formulation (\ref{variational2}) still holds with $\widetilde{u}=w-h\cdot\nabla u$ in three dimensions. Moreover, we get $\widetilde{u}_0|_{\Gamma_R}=w|_{\Gamma_R}$ and the transmission conditions (\ref{derivative3})  due to the fact that
$\mbox{div}_{\G_j}\,u_j=\mbox{div}_{\G_j} (u^{sc}-u^{in})$.
This completes the proof.
\end{proof}

\subsection{Inversion algorithm in 2D}
In this subsection we design a descent algorithm for the inverse problem in two dimensions. Assume that $\Gamma_l$ ($l=1,2,\cdots, N_0$) is a star-shaped boundary that can be parameterized by $\gamma_l(\theta)$ as follows
\ben
\textcolor{rot}{\Gamma_l} =\{x\in\R^2:x=\gamma^{(l)}(\theta):=(a^{(l)}_1,a^{(l)}_2)^\top + r^{(l)}(\theta)(\cos\theta,\sin\theta)^\top,\theta\in[0,2\pi]\}
\enn
where the function $r^{(l)}$ is $2\pi$-periodic and twice continuously differentiable. Let the Fourier series expansion of $r^{(l)}$ be given by
\ben
r^{(l)}(\theta)=\alpha_0^{(l)}+\textcolor{rot}{\sum_{m=1}^\infty \left[\alpha^{(l)}_{2m-1}\cos(m\theta)+\alpha^{(l)}_{2m}\sin(m\theta)\right]}.
\enn
We approximate the unknown boundary $\textcolor{rot}{\Gamma_l}$ by the surface
\be\nonumber
\Gamma^{(l)}_M&=&\{x\in\R^2:x=\gamma^{(l)}(\theta):=(a^{(l)}_1,a^{(l)}_2)^\top + r^{(l)}_M(\theta)(\cos\theta,\sin\theta)^\top,\theta\in[0,2\pi]\},\\ \label{M}
r^{(l)}_M(\theta)&=&\alpha_0^{(l)}+\textcolor{rot}{\sum_{m=1}^M \left[\alpha^{(l)}_{2m-1}\cos(m\theta)+\alpha^{(l)}_{2m}\sin(m\theta)\right]},
\en
 in a finite dimensional space. The function $r^{(l)}_M$ is a truncated series of $r^{(l)}$.
For large $M$, the surface $\Gamma^{(l)}_M$ differs from $\textcolor{rot}{\Gamma_l}$ only in those  high frequency modes of $l\geq M$.
Evidently, there are totally $2M+3$ unknown parameters for $\Gamma^{(l)}_M$, which we denote by
 \ben \Lambda^{(l)}=(\Lambda^{(l)}_1,\cdots,\Lambda^{(l)}_{2M+3})^\top :=(a^{(l)}_1,a^{(l)}_2,\alpha^{(l)}_0,\alpha^{(l)}_1,\alpha^{(l)}_2, \cdots,\alpha^{(l)}_{2{M}-1},\alpha^{(l)}_{2{M}})^\top\in\C^{2{M}+3}.\enn
 Assume that the measurement points $\{z_i\}_{i=1}^{N_{mea}}$ are uniformly distributed on $\Gamma_R$,
 that is, $z_i=R(\cos\theta_i,\sin\theta_i)^\top$, $\theta_i=(i-1)2\pi/N_{mea}$. We use the notation $u(\cdot,d,\om)$ to denote the dependence of the total field on the incident direction $d$ and frequency $\om$. It is supposed  that the measured data are available over
 a finite number of frequencies $\omega_l\in [\omega_{min},\omega_{max}]$ ($l=1,2,\cdots,K$) and several incident directions $d_j$ $(j=1,\cdots,N_{inc})$.
 Hence, we have the data set of the total field
 \ben
 U_{mea}:=\{u(z_i,d_j,\om_l): i=1,\cdots,N_{mea},\; j=1,\cdots,N_{inc},\; l=1,\cdots,K\}.
 \enn
 Then we consider the following modified inverse problem:
\begin{description}
\item[\bf (IP'):] Determine the parameter vector $\Lambda^{(j)}$ of the boundary $\Gamma_j$, $j=1,\cdots,{N_0}$, from knowledge of the near-field data set $U_{mea}$.
\end{description}

The inverse problem can be formulated as the nonlinear operator equation
\be\label{operator}
\mathcal{J}(\Lambda^{(1)},\cdots,\Lambda^{(N_0)})=U_{mea},
\en where $\mathcal{J}$ is the solution operator for all incident directions $d_j$ and frequencies $\omega_l$. The data set can be rewritten as $U_{mea}=\cup_{i=1}^{N_{mea}} u_{mea}(z_i)$, where
\ben u_{mea}(z_i):=\{u(z_i,d_j,\omega_l): j=1,\cdots,N_{inc},\; l=1,\cdots,K\}
\enn is the data set at $z_i\in \Gamma_R$ over all  $d_j$ and $\omega_l$. Let $J_i$ be the solution operator mapping the boundary to $u_{mea}(z_i)$, i.e., $\mathcal{J}_i(\Lambda^{(1)},\cdots,\Lambda^{(N_0)})=u_{mea}(z_i)$.

 To solve the problem (\ref{operator}), we consider the objective function
\ben
F(\Lambda^{(1)},\cdots,\Lambda^{({N_0})}) = \frac{1}{2} \|\mathcal{J}(\Lambda^{(1)},\cdots,\Lambda^{({N_0})})-U_{mea}\|_{l^2}.
\enn
Then the inverse problem ({\bf IP'}) can be formulated as the minimization problem
\ben
\min_{\Lambda^{(1)},\cdots,\Lambda^{({N_0})}\in\C^{2M+3}} F(\Lambda^{(1)},\cdots,\Lambda^{({N_0})}).
\enn
To apply the descent method, it is necessary to compute the gradient of the objective function.
A direct calculation yields that
\ben
\frac{\partial F(\Lambda^{(1)},\cdots,\Lambda^{({N_0})})}{\partial \Lambda^{(l)}_n}=\mbox{Re}\left\{\sum_{i=1}^{N_{mea}}\frac{\partial \mathcal{J}_{i}(\Lambda^{(1)},\cdots,\Lambda^{({N_0})})}{\partial \Lambda^{(l)}_n}\cdot \left[\mathcal{J}_{i}(\Lambda^{(1)},\cdots,\Lambda^{({N_0})})-u_{mea}(z_i)\right]  \right\}.
\enn
Set
\ben
\nabla_{\Lambda^{(l)}}F:=\left(\frac{\partial F(\textcolor{rot}{\Lambda^{(1)}},\cdots,\Lambda^{({N_0})})}{\partial \Lambda^{(l)}_1},\cdots, \frac{\partial F(\Lambda^{(1)},\cdots,\Lambda^{({N_0})})}{\partial \Lambda^{(l)}_{2M+3}}\right)^\top,\quad l=1,2,\cdots, N_0.
\enn
The calculation of $\nabla_{\Lambda^{(l)}}F$ is based on Theorem \ref{theorem4.1} below, which is a consequence of Theorem \ref{theorem3.1}.
\begin{theorem}
\label{theorem4.1}
Let $u$ be the unique solution of the variational problem (\ref{variational2}) with fixed incident direction and frequency. Then the operator $\mathcal{J}_i$ is differentiable in $\Lambda^{(l)}_n$ and its derivatives are given by
\ben
\frac{\partial \mathcal{J}_{i}(\Lambda^{(1)},\cdots,\Lambda^{({N_0})})}{\partial \Lambda^{(l)}_n} =\widetilde{u}_0(z_i),\; l=1,\cdots,{N_0},\;n=1,\cdots,2M+3,\;i=1,\cdots,N_{mea},
\enn
where $\widetilde{u}_0$, together with $\widetilde{u}_j$ ($j=1,\cdots,N_0$),  is the unique weak solution of the boundary value problem:
\ben
\label{Dderivative1}
\nabla\cdot (\mathcal{C}_j:\nabla \widetilde{u}_j )+\rho_j\omega^2 \widetilde{u}_j = 0&&\mbox{in}\quad\Omega_j,\; j=0, 1,\cdots,{N_0},\\
\label{Dderivative3}
\widetilde{u}_j-\widetilde{u}_0-f_j = 0 &&\mbox{on}\quad\Gamma_j,\;j=1,\cdots,{N_0},\\
\label{Dderivative4}
\mathcal{N}_{\mathcal{C}}^-\widetilde{u}_j -\mathcal{N}_\mathcal{C}^+\widetilde{u}_0-g_j =0 &&\mbox{on}\quad\Gamma_j,\;j=1,\cdots,{N_0},\\
\label{Dderivative5}
T \widetilde{u}_0 -\mathcal{T}\widetilde{u}_0=0&&\mbox{on}\quad\Gamma_R.
\enn
Here, $f_j=g_j=0$ for $j=1,\cdots,N_0$, $j\ne l$, and
\ben
\label{Ddataf}
f_l &=& -(h_l\cdot\nu)\left[{\partial_\nu^- u_l}-{\partial_\nu^+ (u^{sc}+u^{in})}\right],\\
\label{Ddatag}
g_l &=& \omega^2(h_l\cdot\nu)\left[\rho_l {u_l}^--\rho_0 (u^{sc}+u^{in})^+\right]\nonumber\\
&-&\partial_\tau\left[\left((\sigma_l(u_l))^--(\sigma_0(u^{sc}+u^{in}))^+\right) (h_{l,2},-h_{l,1})^\top\right],
\enn
 where $u_l:=u|_{\Omega_l}$ and the functions $h_{l,1}$, $h_{l,2}$ are defined in the following way relying on $n$:
\ben
h_{l,1}(\theta)=\begin{cases}
1, & n=1, \cr
0, & n=2, \cr
\cos\theta, & n=3, \cr
\cos((n-2)\theta/2)\cos\theta, & n=4,6,8,\cdots,2M+2, \cr
\sin((n-3)\theta/2)\cos\theta, & n=5,7,9,\cdots,2M+3,
\end{cases}
\enn
\ben
h_{l,2}(\theta)=\begin{cases}
0, & n=1, \cr
1, & n=2, \cr
\sin\theta, & n=3, \cr
\cos((n-2)\theta/2)\sin\theta, & n=4,6,8,\cdots,2M+2, \cr
\sin((n-3)\theta/2)\sin\theta, & n=5,7,9,\cdots,2M+3.
\end{cases}
\enn
\end{theorem}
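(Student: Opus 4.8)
The plan is to read Theorem \ref{theorem4.1} as a finite-dimensional specialization of Theorem \ref{theorem3.1}, obtained by the chain rule. The decisive structural fact is that the star-shaped ansatz (\ref{M}) makes the boundary $\Gamma_l$ an \emph{affine} function of its parameter vector $\Lambda^{(l)}$: each boundary point $\gamma^{(l)}(\theta)$ is linear in $(a_1^{(l)},a_2^{(l)},\alpha_0^{(l)},\ldots,\alpha_{2M}^{(l)})$. Hence differentiating $\mathcal{J}_i$ in a single scalar $\Lambda^{(l)}_n$ amounts to perturbing $\Gamma_l$ along one fixed velocity field while freezing every other boundary $\Gamma_j$, $j\neq l$.

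First I would make this correspondence explicit by differentiating $\gamma^{(l)}(\theta)$ with respect to each coordinate of $\Lambda^{(l)}$. Because of the affine structure, $\pa_{\Lambda^{(l)}_n}\gamma^{(l)}(\theta)$ is a fixed $\theta$-dependent vector that does not depend on the current parameter values; I claim it equals exactly the field $h_l=(h_{l,1},h_{l,2})^\top$ tabulated in the statement. Indeed, the two translation parameters $a_1^{(l)},a_2^{(l)}$ give the constant fields $(1,0)^\top$ and $(0,1)^\top$ ($n=1,2$); the constant radial mode $\alpha_0^{(l)}$ gives $(\cos\theta,\sin\theta)^\top$ ($n=3$); and the $m$-th Fourier coefficients $\alpha_{2m-1}^{(l)},\alpha_{2m}^{(l)}$ multiply the radial direction by $\cos(m\theta)$ and $\sin(m\theta)$, producing $\cos(m\theta)(\cos\theta,\sin\theta)^\top$ and $\sin(m\theta)(\cos\theta,\sin\theta)^\top$. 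Matching $m=(n-2)/2$ for even $n$ and $m=(n-3)/2$ for odd $n$ reproduces the case distinctions for $n=4,6,\ldots$ and $n=5,7,\ldots$ verbatim. Setting $h_j\equiv 0$ for $j\neq l$ records that only $\Gamma_l$ moves.

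Next I would invoke the chain rule for the composition $\Lambda^{(l)}\mapsto\Gamma_l\mapsto\mathcal{J}_i$. Since $\gamma^{(l)}$ is $C^2$ and the modal fields are smooth, the velocity $h_l$ lies in $(C^1(\Gamma_l))^N$, so Theorem \ref{theorem3.1} applies in this direction, giving
$$\frac{\pa\mathcal{J}_i}{\pa\Lambda^{(l)}_n}=\mathcal{J}_\G'(0,\ldots,0,h_l,0,\ldots,0)\big|_{z_i}.$$
Feeding this $h_l$ (and the vanishing $h_j$) into the conclusion of Theorem \ref{theorem3.1} yields precisely the transmission problem displayed in the statement: the equations for $\widetilde{u}_j$ and the transparent condition on $\G_R$ are inherited unchanged, $f_j=g_j=0$ for $j\neq l$ because the corresponding $h_j$ vanish, and substituting $h_l$ into the general formulas (\ref{dataf}) and (\ref{datagN2}) reproduces the stated $f_l$ and $g_l$. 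This identifies $\pa_{\Lambda^{(l)}_n}\mathcal{J}_i=\widetilde{u}_0(z_i)$.

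The only delicate point is the passage from the Fr\'echet remainder of Definition \ref{def}, which is controlled in the $(C^1(\Gamma_l))^N$ norm, to an ordinary one-variable $o(|t|)$ remainder for the scalar increment $\Lambda^{(l)}_n\mapsto\Lambda^{(l)}_n+t$. Here the affine-linearity of (\ref{M}) is what makes the argument clean: changing $\Lambda^{(l)}_n$ by $t$ perturbs $\Gamma_l$ by exactly $t\,h_l$, with no higher-order terms, so $\|t\,h_l\|_{(C^1(\Gamma_l))^N}=|t|\,\|h_l\|_{(C^1(\Gamma_l))^N}$ and the Fr\'echet estimate $o(\|t\,h_l\|)$ collapses directly to $o(|t|)$. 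With that observation in hand, the remaining work—the explicit modal computation of $h_l$ and the substitution into the data formulas of Theorem \ref{theorem3.1}—is entirely routine.
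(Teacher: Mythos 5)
Your proposal is correct and takes essentially the same route as the paper: the paper offers no separate proof of Theorem \ref{theorem4.1}, stating only that it is ``a consequence of Theorem \ref{theorem3.1},'' and your argument---identifying $\partial_{\Lambda^{(l)}_n}\gamma^{(l)}(\theta)$ with the tabulated fields $(h_{l,1},h_{l,2})^\top$, setting $h_j\equiv 0$ for $j\neq l$, and substituting into the data formulas (\ref{dataf}) and (\ref{datagN2})---is precisely the specialization the paper intends. Your write-up in fact supplies details the paper leaves implicit, notably the observation that the affine dependence of the parametrization (\ref{M}) on $\Lambda^{(l)}$ turns the Fr\'echet remainder of Definition \ref{def} into an honest $o(|t|)$ remainder for each scalar parameter.
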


We now propose an algorithm based on the descent method to reconstruct the coefficient vectors $\Lambda^{(l)}$, $l=1,\cdots,{N_0}$. We assume the number $N_0$ of the disconnected components is known in advance. For notational convenience we denote by $\Lambda^{(l,i,j,m)}$ the solution of the inverse problem at the $i$-th iteration step reconstructed from the data set at the frequency $\om_m$ with the incident direction $d_j$.
Our approach consists of the following steps:
\begin{description}
\item[Step 1.] Collect the near-field data over  all frequencies $\om_m$, $m=1,\cdots,K$ and all incident directions $d_j$, $j=1,\cdots,N_{inc}$.
\item[Step 2.] Set initial approximations $\Lambda^{(l,0,0,0)}$ for every $l=1,\cdots,{N_0}$.
\item[Step 3.] For all $l=1,\cdots,N_0$, update the coefficient vector by the iterative formula
    \ben
    \Lambda^{(l,i+1,j,m)}=\Lambda^{(l,i,j,m)}-\epsilon \nabla_{\Lambda^{(l,i,j,m)}}F,\quad i=0,\cdots,L-1,
    \enn
    where $\epsilon$ and $L > 0$ are the step size and total number of iterations, respectively.
\item[Step 4.] For all $l=1,\cdots,{N_0}$, set $\Lambda^{(l,0,j+1,m)}=\Lambda^{(l,L,j,m)}$ and repeat Step 3 until the last incident directions $d_{N_{inc}}$ is reached.
\item[Step 5.] For all $l=1,\cdots,{N_0}$, set $\Lambda^{(l,0,0,m+1)}=\Lambda^{(l,L,N_{inc},m)}$. Repeat Step 3 from the smallest frequency $\omega_1$ and end up with the highest frequency $\omega_K$.
\end{description}

\section{Numerical examples}\label{sec:Numerics}
In this section, we present several numerical examples in 2D to verify the efficiency and validity of \textcolor{rot}{the finite element method solving direct scattering problems and the reconstruction scheme for inverse scattering problems}.

\subsection{Numerical solutions to direct scattering problems}
Firstly, we present an analytic solution to the elastic wave equation in a homogeneous anisotropic medium; see \cite[Chapter 1.7.1]{PS01} for the details. Such a solution will be used to verify the accuracy of our numerical scheme. For simplicity we assume that $\Omega$ consists of one component only, i.e., $N_0=1$.

 In 2D, the symmetry of the stiffness tensor $\mathcal{C}=\{C_{ijkl}\}_{i,j,k,l=1}^2$ leads to at most 6 different elements of stiffness. Using the Voigt notation for tensor indices, i.e,
\ben
ij &=& 11 \quad 22 \quad 12,21\\
\Downarrow &\quad& \,\Downarrow\quad\;\, \Downarrow\quad\quad \Downarrow\\
\alpha &=& \,1 \quad\;\; 2 \quad\quad\, 3
\enn
one can rewrite the stiffness tensor as
\ben
C_{ijkl} \Rightarrow C_{\alpha\beta}=\begin{bmatrix}
C_{11} & C_{12} & C_{13} \\
C_{12} & C_{22} & C_{23} \\
C_{13} & C_{23} & C_{33} \\
\end{bmatrix}.
\enn
In particular, we have
\ben
C_{\alpha\beta}=\begin{bmatrix}
\lambda+2\mu & \lambda & 0 \\
\lambda & \lambda+2\mu & 0 \\
0 & 0 & \mu \\
\end{bmatrix},
\enn
if the elastic medium is homogeneous isotropic with Lam\'e constants $\lambda$ and $\mu$.

In a homogeneous anisotropic medium, we consider the propagation of a plane wave which is perpendicular to a fixed unit vector $d=(d_1,d_2)^\top\in\textcolor{rot}{\mathbb{S}^1}$. The plane wave takes the form
\be
\label{generalsol}
u=p\,e^{i\frac{\omega}{v_p}x\cdot d},
\en
where $p=(p_1,p_2)^\top$ and $v_p$ are the polarization vector and phase velocity to be determined, respectively. Inserting the solution (\ref{generalsol}) into the elastic equation (\ref{Navier-C})  gives
\ben
A_C\,p=\rho\, v_p^2\, p,
\enn
where
\be\nonumber
A_C &=& \left\{\sum_{k,l=1}^2 C_{iklj}d_kd_l\right\}_{i,j=1}^2\\ \label{AC}
 &=& \begin{bmatrix}
C_{11} & C_{13} \\
C_{13} & C_{33} \\
\end{bmatrix}d_1^2+\begin{bmatrix}
2C_{13} & C_{12}+C_{33} \\
C_{12}+C_{33} & 2C_{23} \\
\end{bmatrix}d_1d_2+\begin{bmatrix}
C_{33} & C_{23} \\
C_{23} & C_{22} \\
\end{bmatrix}d_2^2.
\en
It follows from the uniform Legendre ellipticity condition of $\mathcal{C}$ that the matrix $A_C$ is positive definite. Thus, the eigenvectors of $A_C$ give the vector $p$ with the corresponding eigenvalue $\rho v_p^2$.

In order to check whether our code provides the true solution,  we consider the elastic transmission problem: {\it Given $f\in \textcolor{rot}{(H^{1/2}(\Gamma))^2}$ and $g\in \textcolor{rot}{(H^{-1/2}(\Gamma))^2}$, find $u\in \textcolor{rot}{(H^1(\Omega))^2}$ and $\textcolor{rot}{u^{sc}\in(H^1_{loc}(\Omega^c))^2}$ such that}
\be
\label{num1-1}
\nabla\cdot\left(\mathcal{C}:\nabla u\right)+\rho\omega^2 u &=& 0 \quad\mbox{in}\quad\Omega,\\
\label{num1-2}
\Delta^* u^{sc}+\rho_0\omega^2 u^{sc} &=& 0 \quad\mbox{in}\quad\Omega^c,\\
\label{num1-3}
u-u^{sc}&=& f \quad\mbox{on}\quad\pa\Omega,\\
\label{num1-4}
\mathcal{N}_\mathcal{C}^-u-T_{\lambda,\mu}u^{sc} &=& g \quad\mbox{on}\quad\pa\Omega,
\en
and the scattered field $u^{sc}$ satisfies the Kupradze radiation condition. If $\Omega$ is specified as a homogeneous isotropic medium characterized by the density $\rho_1>0$ and the Lam\'e constants $\lambda_1$ and $\mu_1$ \textcolor{rot}{are such that} $\mu_1>0$ and $\lambda_1+\mu_1>0$, then the problem (\ref{num1-1})-(\ref{num1-4}) is reduced to
\be
\label{num2-1}
\Delta^*_1 u+\rho_1\omega^2 u &=& 0 \quad\mbox{in}\quad\Omega,\\
\label{num2-2}
\Delta^* u^{sc}+\rho_0\omega^2 u^{sc} &=& 0 \quad\mbox{in}\quad\Omega^c,\\
\label{num2-3}
u-u^{sc}&=& f \quad\mbox{on}\quad\pa\Omega,\\
\label{num2-4}
T_{\lambda_1,\mu_1}u-T_{\lambda,\mu}u^{sc} &=& g \quad\mbox{on}\quad\pa\Omega,
\en
where $\Delta_1^*:=\mu_1\Delta +(\lambda_1+\mu_1)\mbox{grad}\,\mbox{div}$.

We define the far-field pattern of the total displacement as
\ben
u^\infty(\widehat{x})=u_p^\infty(\widehat{x})\,\widehat{x} +u_s^\infty(\widehat{x})\,\widehat{x}^\perp,
\enn
where $u_p^\infty(\widehat{x})=u^\infty(\widehat{x}) \cdot\widehat{x}$, $u_s^\infty(\widehat{x})=u^\infty(\widehat{x})\cdot\widehat{x}^\perp$ are two scalar functions given by the asymptotic behavior
\ben
u^{sc}=\frac{\exp(ik_px+i\pi/4)}{\sqrt{8\pi k_p|x|}} u_p^\infty(\widehat{x})\,\widehat{x} +\frac{\exp(ik_sx+i\pi/4)}{\sqrt{8\pi k_s|x|}} u_s^\infty(\widehat{x})\,\widehat{x}^\perp +O(|x|^{-3/2}).
\enn
We decompose the scattered field into
\ben
u^{sc}=\mbox{grad}\,\Psi_p+\overrightarrow{\mbox{curl}}\,\Psi_s,
\enn
where
\ben
\Psi_p=\sum_{n\in\Z}\Psi_p^n\, H_n^{(1)}(k_p|x|)e^{in\theta_x},\quad
\Psi_s=\sum_{n\in\Z}\Psi_s^n\, H_n^{(1)}(k_s|x|)e^{in\theta_x},\qquad \Psi_p^n,\;\Psi_s^n\in \C.
\enn
Then it follows from the asymptotic behavior of Hankel functions that
\ben
&&u_p^\infty(\widehat{x})= u_p^\infty(\theta)=4k_p\sum_{n\in\Z}\Psi_p^n\,e^{in(\theta-\pi/2)},\\
&&u_s^\infty(\widehat{x})= u_s^\infty(\theta)=-4k_s\sum_{n\in\Z}\Psi_s^n\,e^{in(\theta-\pi/2)}.
\enn

\textcolor{rot}{In numerical computations, the computational domains $\Omega$ and $\Omega_0$ are discretized by uniform triangle elements and we employ piecewise linear basis functions to construct the finite element space of $(H^1(\Omega))^2$ and $(H^1(\Omega_0))^2$.}

{\bf Example 1.} In the first example, $\Omega$ is specified as a homogeneous isotropic medium and we consider the problem (\ref{num2-1})-(\ref{num2-4}).
Let $f$ and $g$ be such that the analytic solution of the above boundary value problem is given by
\ben
u(x) = \nabla J_0(k_{p,1}|x|),\quad x\in\Omega,\quad\quad
u^{sc}(x) = \nabla H_0^{(1)}(k_p|x|),\quad x\in\Omega^c,
\enn
where $k_{p,1}=\omega\sqrt{\rho_1/(\lambda_1+2\mu_1)}$. We choose $\lambda_1=2$, $\mu_1=3$, $\rho_1=3$, $\lambda=1$, $\mu=2$, $\textcolor{rot}{\rho_0=1}$ and the boundary $\pa\Omega$ is selected to be a circle
\ben
\pa\Omega=\{x\in\R^2: |x|=1\},
\enn
or a rounded-triangle-shaped curve
\ben
\pa\Omega=\{x\in\R^2: x=(2+0.5\cos3t)(\cos t,\sin t)\textcolor{rot}{^\top}, t\in[0,2\pi)\}.
\enn
 Denote $U=(u,u^{sc})$ and $U_h=(u_h,u_h^{sc})$ the exact and numerical solutions, respectively. The numerical errors (see Tables \ref{Table1} and \ref{Table2})
\be\label{eq:20}
E_0=\|U-U_h\|_{\textcolor{rot}{(L^2(\Omega))^2\times (L^2(\Om_0))^2}},\quad E_1=\|U-U_h\|_{\textcolor{rot}{(H^1(\Omega))^2\times (H^1(\Om_0))^2}},
\en
\textcolor{rot}{indicate the convergence order}
\be
\label{order}
E_0=O(h^2),\quad E_1=O(h),
\en
where $h$ denotes the \textcolor{rot}{finite element} mesh size for discretizing our variational formulation.

\begin{table}[ht]
\centering
\begin{tabular}{p{10pt} p{30pt} p{50pt} p{30pt} p{50pt} p{30pt}}
\multicolumn{6}{p{260pt}}{}  \\
\hline
$\omega$ & $h$ & $E_0$ & Order &  $E_1$ & Order \\
\hline
  &$h_0$    & 1.55E-2 & --     & 2.29E-1 & --    \\
1 &$h_0/2$  & 3.97E-3 & 1.97   & 1.07E-1 & 1.10  \\
  &$h_0/4$  & 1.00E-3 & 1.99   & 5.22E-2 & 1.04  \\
\hline
  &$h_0$    & 1.52E-1 & --     & 6.66E-1 & --    \\
3 &$h_0/2$  & 3.81E-2 & 2.00   & 2.95E-1 & 1.17  \\
  &$h_0/4$  & 9.62E-3 & 1.99   & 1.43E-1 & 1.04  \\
\hline
\end{tabular}\caption{Numerical errors for Example 1 where $\Gamma$ is a circle, $h_0=0.4304$ and $R=2$.}
\label{Table1}
\end{table}

\begin{table}[ht]
\centering
\begin{tabular}{p{10pt} p{30pt} p{50pt} p{30pt} p{50pt} p{30pt}}
\multicolumn{6}{p{260pt}}{}  \\
\hline
$\omega$ & $h$ & $E_0$ & Order &  $E_1$ & Order \\
\hline
  &$h_0$    & 3.03E-2 & --     & 1.56E-1 & --    \\
1 &$h_0/2$  & 6.83E-3 & 2.15   & 7.16E-2 & 1.12  \\
  &$h_0/4$  & 1.79E-3 & 1.93   & 3.56E-2 & 1.01  \\
\hline
  &$h_0$    & 5.22E-1 & --     & 1.94E0  & --    \\
3 &$h_0/2$  & 1.32E-1 & 1.98   & 7.81E-1 & 1.31  \\
  &$h_0/4$  & 3.59E-2 & 1.88   & 3.75E-1 & 1.06  \\
\hline
\end{tabular}\caption{Numerical errors for Example 1 where $\Gamma$ is a rounded-triangle-shaped curve, $h_0=1.1474$ and $R=5$.}
\label{Table2}
\end{table}

{\bf Example 2.} In this example, $\Omega$ is supposed to be a homogeneous anisotropic medium characterized by the density $\textcolor{rot}{\rho}>0$ and the stiffness tensor
\ben
C_{\alpha\beta}=\begin{bmatrix}
C_{11} & C_{12} & C_{13} \\
C_{12} & C_{22} & C_{23} \\
C_{13} & C_{23} & C_{33} \\
\end{bmatrix}.
\enn
Consider the problem (\ref{num1-1})-(\ref{num1-4}) and let $f$ and $g$ be such that the analytic solution is given by
\ben
u(x) = pe^{i\frac{\omega}{v_p}x\cdot d},\quad x\in\Omega,\qquad
u^{sc}(x) = \nabla H_0^{(1)}(k_p|x|),\quad x\in\Omega^c,
\enn
where $d=(\sqrt{2}/2,\sqrt{2}/2)$ and $\textcolor{rot}{\rho} v_p^2$ is the first eigenvalue of the matrix $A_C$ (see (\ref{AC}) ). We choose
\ben
&C_{11}=10.5, C_{22}=13, C_{33}=4.75, C_{12}=3.25, C_{13}=-0.65, C_{23}=-1.52, \\
&\textcolor{rot}{\rho}=3, \lambda=1, \mu=2, \textcolor{rot}{\rho_0=1}.
\enn
The boundary $\pa\Omega$ is selected to be a circle or a rounded-triangle-shaped curve given in Example 1.
In Tables \ref{Table3} and \ref{Table4} we illustrate the
the numerical errors of $E_0$ and $E_1$ (see (\ref{eq:20}))
 which also \textcolor{rot}{indicate the convergence order (\ref{order})}. We plot the
the numerical solutions in Figures \ref{E2-1} and \ref{E2-2} from which it can be seen that they are in a good agreement with the exact ones. To compare the errors  for far-field patterns,  we observe that the exact far-field pattern takes the explicit form
$
u^\infty(\widehat{x})=4k_p\widehat{x}$.
From Figures \ref{E2-3} and \ref{E2-4} it can be seen that the numerical far-field patterns provide good approximations to the exact ones.

\begin{table}[htb]
\centering
\begin{tabular}{p{10pt} p{30pt} p{50pt} p{30pt} p{50pt} p{30pt}}
\multicolumn{6}{p{260pt}}{}  \\
\hline
$\omega$ & $h$ & $E_0$ & Order &  $E_1$ & Order \\
\hline
  &$h_0$    & 1.95E-2 & --     & 2.46E-1 & --    \\
1 &$h_0/2$  & 5.11E-3 & 1.93   & 1.15E-1 & 1.10  \\
  &$h_0/4$  & 1.32E-3 & 1.95   & 5.60E-2 & 1.04  \\
\hline
  &$h_0$    & 2.86E-1 & --     & 1.27E0  & --    \\
3 &$h_0/2$  & 8.62E-2 & 1.73   & 5.15E-1 & 1.30  \\
  &$h_0/4$  & 2.32E-2 & 1.89   & 2.28E-1 & 1.18  \\
\hline
\end{tabular}\caption{Numerical errors for Example 2 where $\Gamma$ is a circle, $h_0=0.4304$ and $R=2$.}
\label{Table3}
\end{table}

\begin{table}[htb]
\centering
\begin{tabular}{p{10pt} p{30pt} p{50pt} p{30pt} p{50pt} p{30pt}}
\multicolumn{6}{p{260pt}}{}  \\
\hline
$\omega$ & $h$ & $E_0$ & Order &  $E_1$ & Order \\
\hline
  &$h_0$    & 9.08E-2 & --     & 3.12E-1 & --    \\
1 &$h_0/2$  & 2.26E-2 & 1.97   & 1.30E-1 & 1.10  \\
  &$h_0/4$  & 5.87E-3 & 1.99   & 6.30E-2 & 1.04  \\
\hline
  &$h_0$    & 1.82E0  & --     & 5.78E0  & --    \\
3 &$h_0/2$  & 5.73E-1 & 2.00   & 2.04E0  & 1.17  \\
  &$h_0/4$  & 1.59E-1 & 1.99   & 7.35E-1 & 1.04  \\
\hline
\end{tabular}\caption{Numerical errors for Example 2 where $\Gamma$ is a rounded-triangle-shaped curve,  $h_0=1.1474$ and $R=3$.}
\label{Table4}
\end{table}

\begin{figure}[htb]
\centering
\begin{tabular}{cccc}
\includegraphics[scale=0.15]{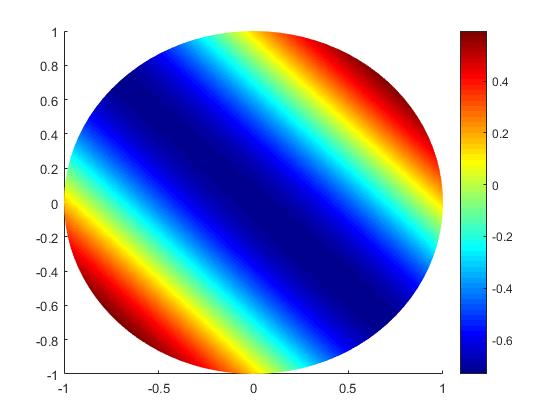}&
\includegraphics[scale=0.15]{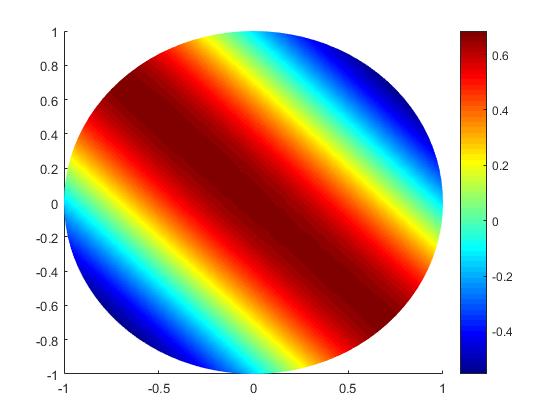}&
\includegraphics[scale=0.15]{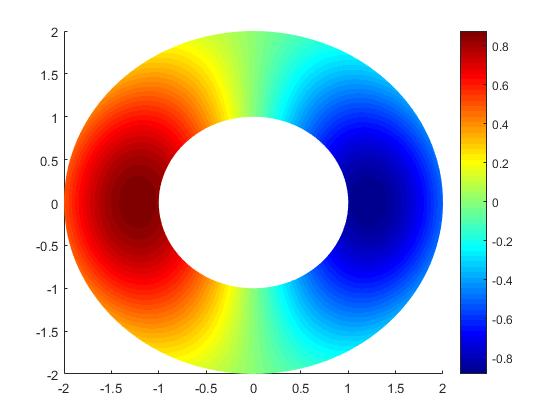}&
\includegraphics[scale=0.15]{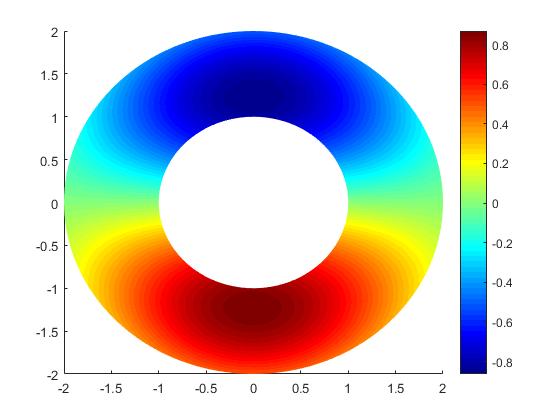}\\
(a) $\mbox{Re}\,u_{h,1}$ & (b) $\mbox{Re}\,u_{h,2}$ & (c) $\mbox{Re}\,u^{sc}_{h,1}$ & (d) $\mbox{Re}\,u^{sc}_{h,2}$ \\
\includegraphics[scale=0.15]{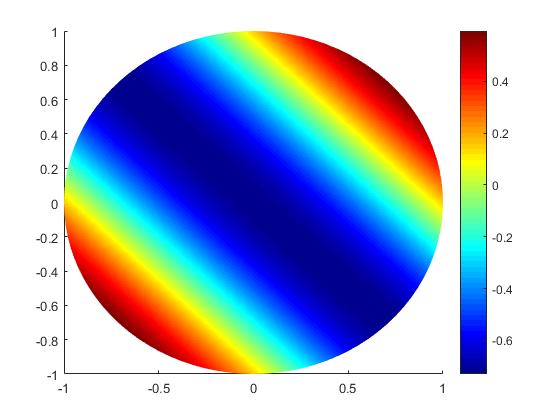}&
\includegraphics[scale=0.15]{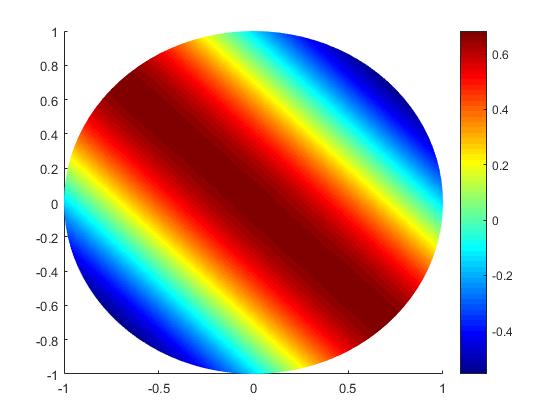}&
\includegraphics[scale=0.15]{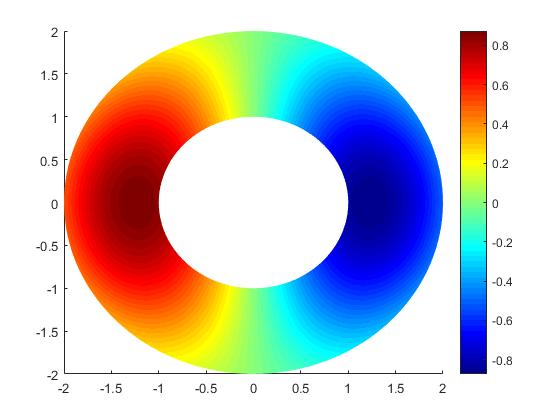}&
\includegraphics[scale=0.15]{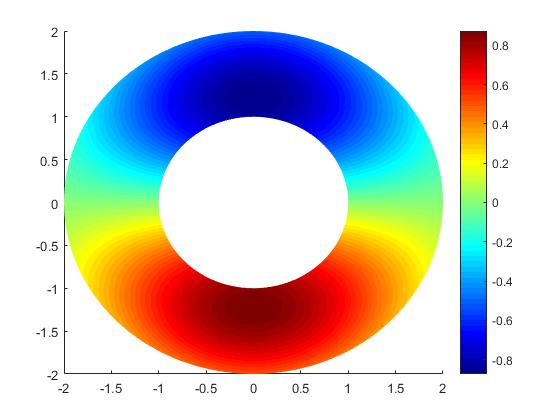}\\
(e) $\mbox{Re}\,u_{1}$ & (f) $\mbox{Re}\,u_{2}$ & (g) $\mbox{Re}\,u^{sc}_{1}$ & (h) $\mbox{Re}\,u^{sc}_{2}$
\end{tabular}
\caption{Real parts of the numerical solutions $u_h=(u_{h,1},u_{h,2})^\top$, $u^{sc}_h=(u^{sc}_{h,1},u^{sc}_{h,2})^\top$ and exact solutions $u=(u_{1},u_{2})^\top$, $u^{sc}=(u^{sc}_{1},u^{sc}_{2})^\top$ for Example 2. We set $\omega=3$,  $h=0.1076$ and $R=2$.}
\label{E2-1}
\end{figure}

\begin{figure}[htb]
\centering
\begin{tabular}{cccc}
\includegraphics[scale=0.15]{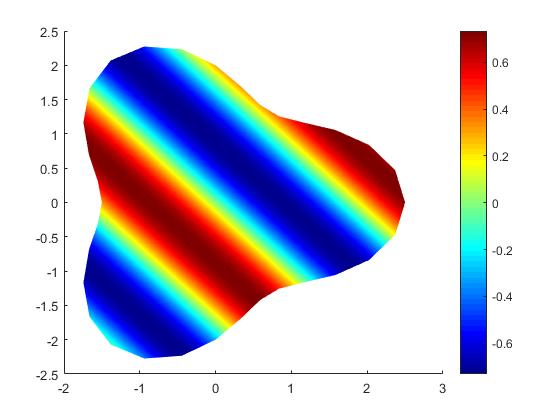}&
\includegraphics[scale=0.15]{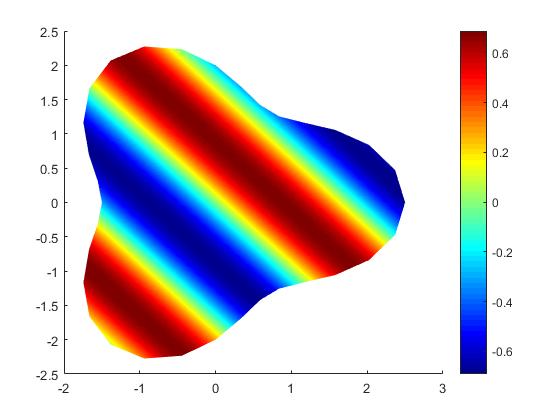}&
\includegraphics[scale=0.15]{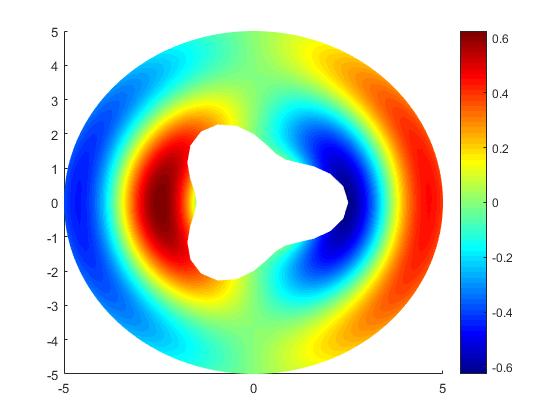}&
\includegraphics[scale=0.15]{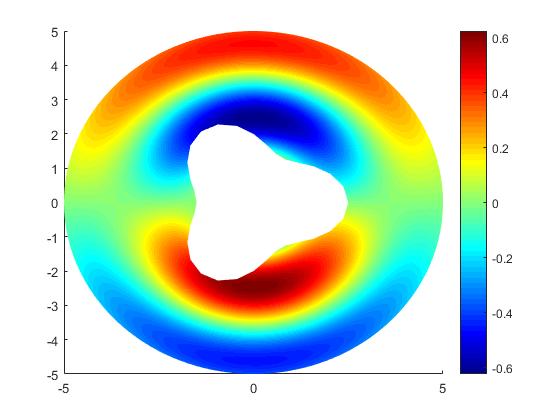}\\
(a) $\mbox{Im}\,u_{h,1}$ & (b) $\mbox{Im}\,u_{h,2}$ & (c) $\mbox{Im}\,u^{sc}_{h,1}$ & (d) $\mbox{Im}\,u^{sc}_{h,2}$ \\
\includegraphics[scale=0.15]{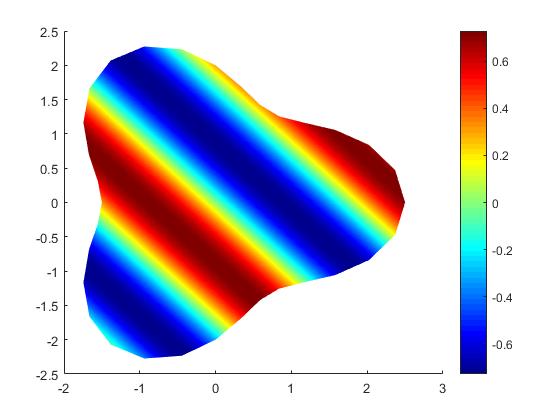}&
\includegraphics[scale=0.15]{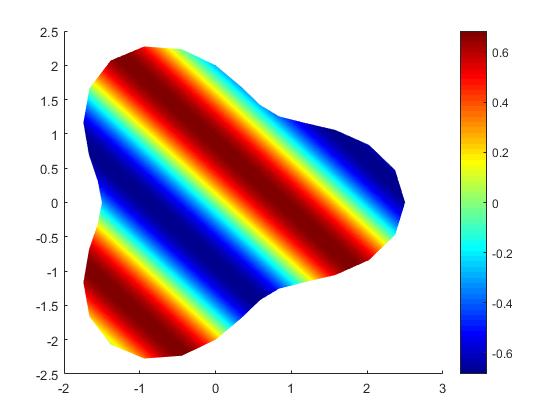}&
\includegraphics[scale=0.15]{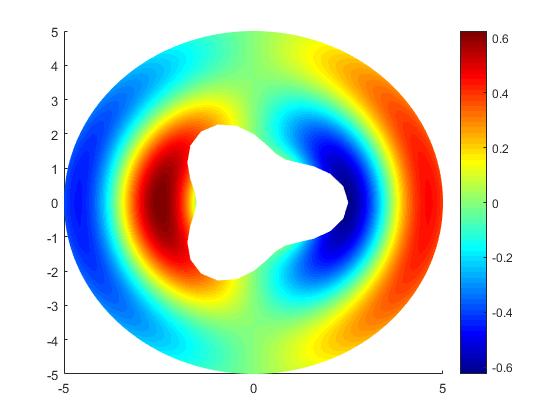}&
\includegraphics[scale=0.15]{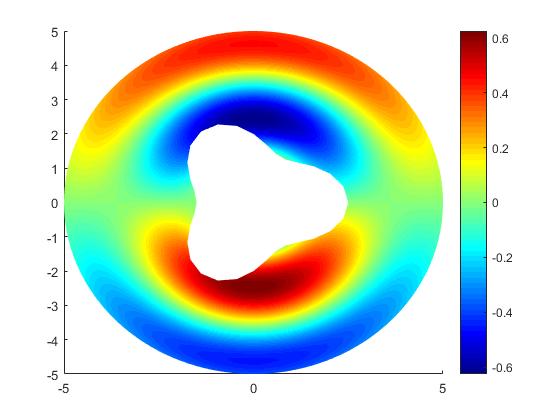}\\
(e) $\mbox{Im}\,u_{1}$ & (f) $\mbox{Im}\,u_{2}$ & (g) $\mbox{Im}\,u^{sc}_{1}$ & (h) $\mbox{Im}\,u^{sc}_{2}$
\end{tabular}
\caption{Imaginary parts of the numerical solutions $u_h=(u_{h,1},u_{h,2})^\top$, $u^{sc}_h=(u^{sc}_{h,1},u^{sc}_{h,2})^\top$ and exact solutions $u=(u_{1},u_{2})^\top$, $u^{sc}=(u^{sc}_{1},u^{sc}_{2})^\top$ for Example 2. We set $\omega=3$,  $h=0.2869$ and $R=3$.}
\label{E2-2}
\end{figure}

\begin{figure}[htb]
\centering
\begin{tabular}{cc}
\includegraphics[scale=0.4]{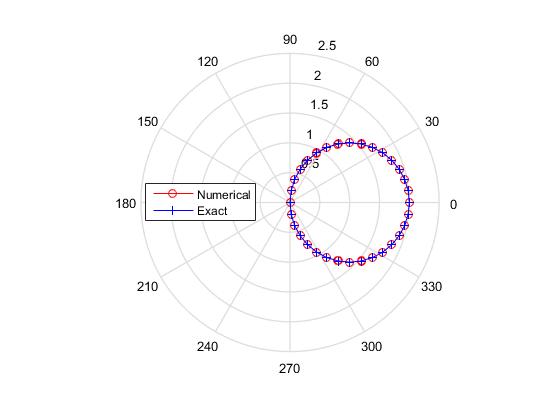}&
\includegraphics[scale=0.4]{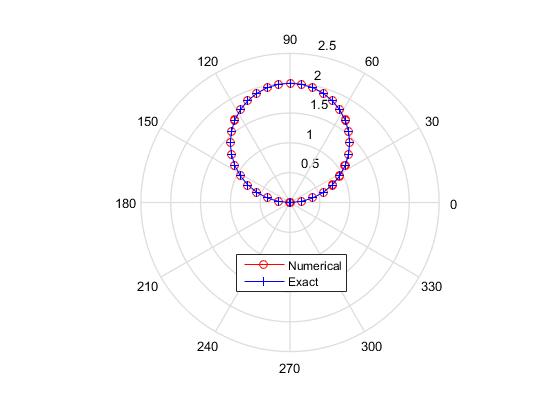}\\
(a) $\mbox{Re}\,u_1^\infty$ & (b) $\mbox{Re}\,u_2^\infty$ 
\end{tabular}
\caption{Exact and numerical far-field pattern $u^\infty=(u_1^\infty,u_2^\infty)^\top$ for Example 2 when $\Gamma$ is a circle.}
\label{E2-3}
\end{figure}

\begin{figure}[htb]
\centering
\begin{tabular}{cc}
\includegraphics[scale=0.4]{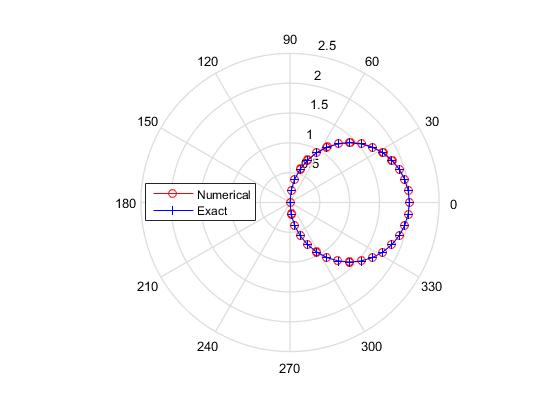}&
\includegraphics[scale=0.4]{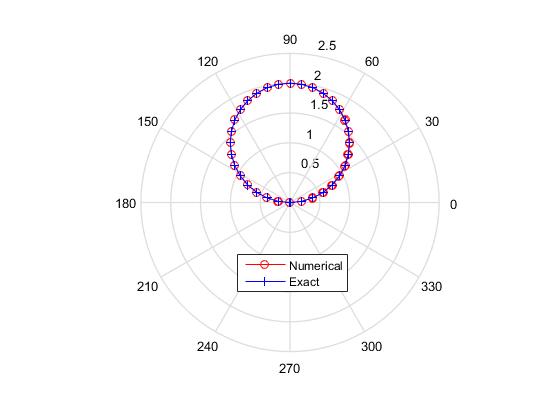}\\
(a) $\mbox{Re}\,u_1^\infty$ & (b) $\mbox{Re}\,u_2^\infty$ 
\end{tabular}
\caption{Exact and numerical far-field pattern $u^\infty=(u_1^\infty,u_2^\infty)^\top$ for Example 2 when $\Gamma$ is a rounded-triangle-shaped curve.}
\label{E2-4}
\end{figure}
\subsection{Numerical solutions to inverse scattering problems}
We consider the reconstruction of multiple anisotropic elastic bodies in 2D  using the inversion algorithm described in Section \ref{sec:inverse}. Set $\rho_0=2000$ Kg/m$^3$, $c_p=\sqrt{(\lambda+2\mu)/\rho_0}=3000$ m/s, $c_s=\sqrt{\mu/\rho_0}=1800$ m/s and $R=5$ m. The number of measurement points and iterations are taken as $N_{mea}=64$, $L=10$, respectively.
For each frequency, we set the step size as $\epsilon=0.005/k_p$. The boundary of the unknown anisotropic obstacles  together with the initial guess are illustrated in Figure \ref{obstacle}, in which Obstacle 1 is kite-shaped and Obstacle 2 is an ellipse. The density of the anisotropic medium is selected as $\rho=2400$ Kg/m$^3$. We choose the stiffness tensor as
\ben
C_{klmn}\Rightarrow C_{\alpha\beta}=\begin{bmatrix}
6 & 8 & 2 \\
8 & 21 & 10 \\
2 & 10 & 30 \\
\end{bmatrix}\times 10^{10}\;\mbox{Pa}.
\enn
To examine the reconstruction results, we compute the residual error $\mbox{RError}_{i+1}$, $i=0,\cdots,K$ of the total field 
where
\ben
\mbox{RError}_{i+1}=\frac{\left\|\mathcal{J}_N(\Lambda^{(1,L,N_{inc},i)},\cdots, \Lambda^{(N,L,N_{inc},i)})-U_{mea}\right\|_{l^2}}{\left\|U_{mea}\right\|_{l^2}}.
\enn

In the first experiment, we use four incident plane waves (i.e., $N_{inc}=4$) incited at two frequencies $\omega_1=5\,\mbox{kHz}$ and $\omega_2=6\,\mbox{kHz}$ (i.e. $K=2$). The reconstruction results at each frequency are shown in Figure \ref{E3-1}. For different choice of $M$ (see (\ref{M})), the residual errors listed in Table \ref{Table5} indicates that the residual error decreases as frequency increases. Note that the errors corresponding to $M=10$ and $M=20$ are almost the same, because the underlying scatterers possess smooth boundaries.

In the second experiment, we use the data generated by one fixed direction $d=(-\sqrt{2}/2,\sqrt{2}/2)^\top$ (i.e., $N_{inc}=1$) and by three distinct frequencies $\omega_1=5\,\mbox{kHz}$, $\omega_2=6\,\mbox{kHz}$ and $\omega_3=7\,\mbox{kHz}$ (i.e., $K=3$). In this case the number of iterations at each frequency is set as $L=20$. The parameter $M$ for truncating the Fourier series is taken as
$M=20$. The reconstruction results shown in Figure \ref{E3-2} are very satisfactory.

\begin{table}[htb]
\centering
\begin{tabular}{p{20pt} p{50pt} p{50pt} p{50pt}}
\multicolumn{4}{p{200pt}}{}  \\
\hline
$M$ & $\mbox{RError}_1$ & $\mbox{RError}_2$ & $\mbox{RError}_3$ \\
\hline
10  & 0.3042   & 0.0656  & 0.0521  \\
\hline
20  & 0.3042   & 0.0738  & 0.0528  \\
\hline
\end{tabular}\caption{Change of residual reconstruction errors with respect to frequencies.}
\label{Table5}
\end{table}

\begin{figure}[htb]
\centering
\includegraphics[scale=0.5]{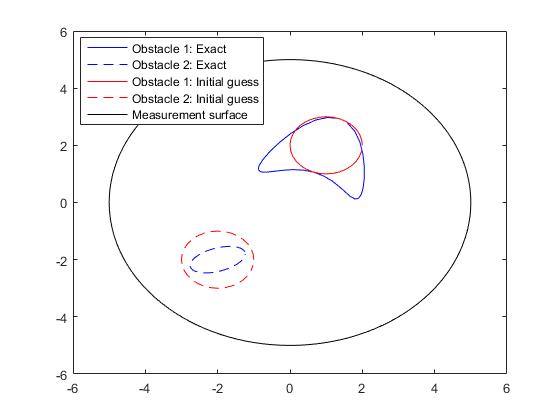}\\
\caption{The obstacles to be reconstructed and initial guess.}
\label{obstacle}
\end{figure}

\begin{figure}[htb]
\centering
\begin{tabular}{cc}
\includegraphics[scale=0.35]{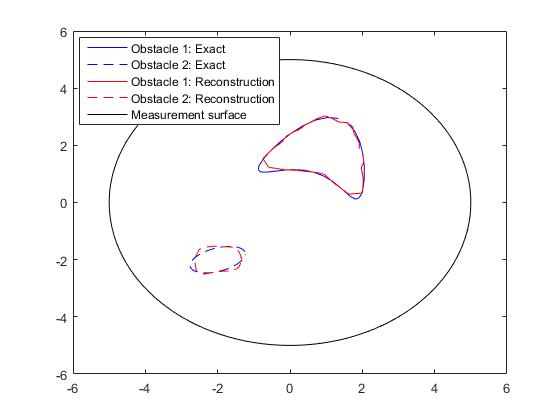} &
\includegraphics[scale=0.35]{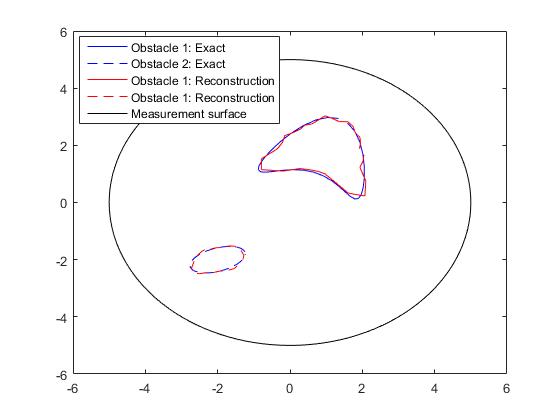} \\
(a) $\omega=5\mbox{kHz}$ & (b) $\omega=6\mbox{kHz}$
\end{tabular}
\caption{Reconstruction results from four incident directions at distinct frequencies. We set $M=10$.}
\label{E3-1}
\end{figure}

\begin{figure}[htb]
\centering
\includegraphics[scale=0.5]{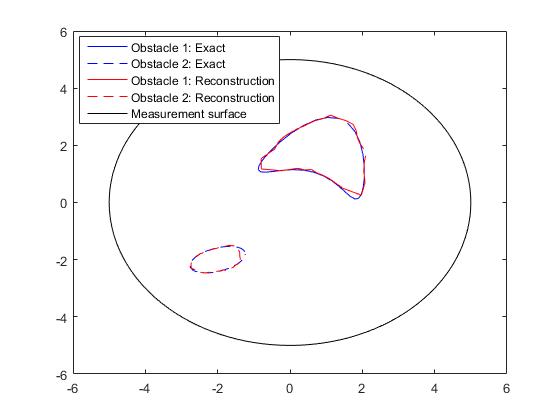} \\
\caption{Reconstruction result from the data of one incident direction and three frequencies.}
\label{E3-2}
\end{figure}

\section*{Acknowledgement}
The work of G. Bao is supported in part by a Key Project of the Major Research Plan of NSFC (No. 91130004), a NSFC A3 Project (No.11421110002), NSFC Tianyuan Projects (No. 11426235; No. 11526211, a NSFC Innovative Group Fun
(No.11621101), and a special research grant from Zhejiang University. The work of T. Yin is partially supported by the NSFC Grant (No. 11371385; No.  11501063).

\end{document}